\newtheorem{theorem}{Theorem}[section]
\newtheorem{lemma}[theorem]{Lemma}
\newtheorem{proposition}[theorem]{Proposition}
\newtheorem{corollary}[theorem]{Corollary}
\theoremstyle{definition}
\newtheorem{definition}[theorem]{Definition}
\newtheorem*{ack}{Acknowledgements}
\newcommand{\ad}{\mathrm{ad}}
\newcommand{\Ad}{\mathrm{Ad}}
\DeclareMathOperator{\Hol}{\mathrm{Hol}}
\DeclareMathOperator{\Aut}{\mathrm{Aut}}
\DeclareMathOperator{\Iso}{\mathrm{I}}
\DeclareMathOperator{\Orth}{\mathrm{O}}
\DeclareMathOperator{\GLin}{\mathrm{GL}}
\newcommand{\MC}[1]{\omega_{{}_{#1}}}
\newcommand{\Lt}[1]{\operatorname{L}_{#1}}
\newcommand{\Rt}[1]{\operatorname{R}_{#1}}
\DeclareSymbolFont{tipa}{T3}{cmr}{m}{sl}
\DeclareMathSymbol{\kgf}{\mathord}{tipa}{'255}
\title[Intrinsic holonomy and curved cosets]{Intrinsic holonomy and curved cosets of Cartan geometries}
\author{Jacob W. Erickson}
\date{\today}
\begin{document}
\maketitle

\begin{abstract}We provide an intrinsic notion of curved cosets for arbitrary Cartan geometries, simplifying the existing construction of curved orbits for a given holonomy reduction. To do this, we define an intrinsic holonomy group, which is shown to coincide precisely with the standard definition of the holonomy group for Cartan geometries in terms of an associated principal connection. These curved cosets retain many characteristics of their homogeneous counterparts, and they behave well under the action of automorphisms. We conclude the paper by using the machinery developed to generalize the de Rham decomposition theorem for Riemannian manifolds and give a potentially useful characterization of inessential automorphism groups for parabolic geometries.\end{abstract}


\section{Introduction}
In motivating the study of Cartan geometries, we are often told that Cartan geometries resemble Lie groups in a particular way, allowing us to extend the spirit of Klein's Erlangen program to geometric structures like Riemannian manifolds that are not necessarily homogeneous. From this, an astonishingly extensive portion of the vast menagerie of geometric structures studied in differential geometry can be brought under a single unified perspective.

The recent renaissance in the study of parabolic geometries over the past few decades bears testament to the enormous benefit of this frame of mind. For example, using this resemblance of Cartan geometries to Lie groups, sequences of natural differential operators analogous to the (dual) Bernstein-Gelfand-Gelfand resolutions in representation theory were constructed in \cite{CSV} (later simplified by \cite{CalderbankDiemer}) for parabolic geometries of arbitrary type.

And yet, in the long shadow cast by these towering achievements of differential geometry by way of representation theory, several fundamental questions still seem astoundingly unexplored. For example, if a Cartan geometry is like a Lie group, then what do its subgroups look like? Would the notion of subgroup even be useful for Cartan geometries, let alone make sense? If so, then how much like actual subgroups can we expect them to behave?

As we will see shortly, the answers to these questions come, at least in part, from holonomy reductions. These were explored for Cartan geometries in \cite{CGH}, but while the constructions in that paper were fairly straightforward to implement in most applications, the approach was extrinsic, relying on an extension of the Cartan connection to a principal connection on a larger bundle, and the role of the analogy between Cartan geometries and Lie groups was, in our opinion, somewhat obscured by this. Thus, the first part of this paper will be devoted to providing an intrinsic reframing of the machinery of \cite{CGH}. To do this, we will give an intrinsic definition of holonomy, then use this intrinsic definition to specify ``subgroups'' and ``cosets'' of the Cartan geometry.

Excitingly, our notion of intrinsic holonomy allows us to make the analogy between Cartan geometries and Lie groups quite explicit, and to highlight this, we will attempt to fully commit to the pretense of doing ``Lie theory" on Cartan geometries. Nearly all results in the main body of the paper are directly motivated by analogous results for Lie groups, and in some cases, proofs will deliberately mirror well-known proofs from elementary Lie theory. We have found that this ``pretend Lie theory" perspective is quite appealing to those not already familiar with Cartan geometries, and hope that presenting our ideas in this somewhat avant-garde way both clarifies their role in the subject and encourages others to explore this area of geometry.

We have organized the paper as follows. After giving some preliminary definitions in Section \ref{preliminaries}, we start in Section \ref{intrinsicholonomy} by giving a more natural definition of holonomy, and then show that this intrinsic holonomy is actually equivalent to the extrinsic holonomy used in \cite{CGH}. In Section \ref{curvedcosets}, we introduce the notion of curved cosets and demonstrate some ways in which they behave like genuine cosets of Lie groups. Section \ref{tractorconnections} deals with tractor bundles and tractor connections, and while none of the results in that section are particularly new, we think that the perspective used there is aesthetically pleasant and connects well with the machinery developed in Section \ref{curvedcosets}. The way that automorphisms of Cartan geometries interact with holonomy and curved cosets is explored in Section \ref{automorphisms}. Finally, to exhibit possible applications beyond those described in \cite{CGH}, we end the paper in Section \ref{applications} with a generalization of the de Rham decomposition theorem for Riemannian manifolds and a new situationally useful approach to essential automorphisms of parabolic geometries.

\begin{ack}The author would like to thank Andreas \v{C}ap for his helpful commentary on several drafts of this paper, in particular for his critique of earlier drafts.\end{ack}

\section{Preliminaries}\label{preliminaries}
We begin by recalling some basic definitions from the theory of Cartan geometries that will be used throughout the paper. The definitions of tractor connections, automorphisms, extension functors, and completeness, while standard aspects of the subject, have been deferred to later sections for clarity of exposition. For a more detailed introduction, the author recommends \cite{CapSlovak}. Alternatively, an intuitive but slightly less scrupulous introduction can be found in \cite{Sharpe}.

Each Cartan geometry is modeled on a homogeneous geometric structure in the sense of Klein.

\begin{definition}A \emph{model} or \emph{model geometry} is a pair $(G,H)$, where $G$ is a Lie group and $H$ is a closed subgroup of $G$ such that $G/H$ is connected.\end{definition}

If $(G,H)$ is a model, then $G$ is a principal $H$-bundle over $G/H$. The geometric information of the model is encoded by the Maurer-Cartan form $\MC{G}$ on $G$, which takes each $X_g\in T_gG$ to $\Lt{g^{-1}*}X_g\in\mathfrak{g}$. To extend this perspective, we replace $G$ with a principal $H$-bundle over a smooth manifold $M$ and $\MC{G}$ with a $\mathfrak{g}$-valued $1$-form called a \emph{Cartan connection} that satisfies a few similar properties.

\begin{definition}Suppose $M$ is a smooth manifold and $(G,H)$ is a model. Then, a \emph{Cartan geometry of type $(G,H)$} over $M$ is a pair $(\mathscr{G},\omega)$, where $q_{{}_H}\hspace{-0.4em}:\mathscr{G}\to M$ is a principal $H$-bundle over $M$ and $\omega$ is a $\mathfrak{g}$-valued $1$-form on $\mathscr{G}$ such that the following are satisfied:
\begin{itemize}
\item For every $\mathscr{g}\in\mathscr{G}$, $\omega_\mathscr{g}:T_\mathscr{g}\mathscr{G}\to\mathfrak{g}$ is a linear isomorphism;
\item For every $h\in H$, $\Rt{h}^*\omega=\Ad_{h^{-1}}\omega$\,;
\item For every $Y\in\mathfrak{h}$ and $\mathscr{g}\in\mathscr{G}$, the flow of the vector field $\omega^{-1}(Y)$ starting at $\mathscr{g}$ is given by $\exp(t\omega^{-1}(Y))\mathscr{g}=\mathscr{g}\exp(tY)$.\end{itemize}\end{definition}

This description of geometric structure encompasses many of the various flavors of modern differential geometry, putting them all under a common framework. As a motivating example, an $m$-dimensional Riemannian manifold $(M,\mathrm{g})$ determines a unique (torsion-free) Cartan geometry of type $(\Iso(m),\Orth(m))$ over $M$, where $\Iso(m)$ is the isometry group of $m$-dimensional Euclidean space, and a (torsion-free) Cartan geometry of type $(\Iso(m),\Orth(m))$ over $M$ determines a Riemannian metric on $M$ that is unique up to scaling.

Given a model $(G,H)$, a helpful example of a Cartan geometry of type $(G,H)$ to keep in mind is the corresponding \emph{Klein geometry} $(G,\MC{G})$ of type $(G,H)$ over $G/H$, which gives the model as a Cartan geometry. When a Cartan geometry is not equivalent to the Klein geometry, its curvature indicates how they differ locally.

\begin{definition}Let $(\mathscr{G},\omega)$ be a Cartan geometry of type $(G,H)$. The \emph{curvature} $\Omega$ of $(\mathscr{G},\omega)$ is given by $\Omega=\mathrm{d}\omega+\frac{1}{2}[\omega,\omega]$. In particular, for $X,Y\in\mathfrak{g}$, the curvature applied to $\omega^{-1}(X)\wedge\omega^{-1}(Y)$ is given by \begin{align*}\Omega(\omega^{-1}(X)\wedge\omega^{-1}(Y)) & =\mathrm{d}\omega(\omega^{-1}(X)\wedge\omega^{-1}(Y))+[X,Y] \\ & =\omega^{-1}(X)\omega(\omega^{-1}(Y))-\omega^{-1}(Y)\omega(\omega^{-1}(X)) \\ & \hspace{5em} -\omega([\omega^{-1}(X),\omega^{-1}(Y)])+[X,Y] \\ & =[X,Y]-\omega([\omega^{-1}(X),\omega^{-1}(Y)]).\end{align*} We will frequently use the notation $\Omega^{\omega}(X\wedge Y):=\Omega(\omega^{-1}(X)\wedge\omega^{-1}(Y))$. When $\Omega$ vanishes identically, we say the Cartan geometry is \emph{flat}.\end{definition}

When the curvature vanishes everywhere, the Cartan geometry is locally equivalent to the model geometry. In other words, a flat Cartan geometry is equivalent to a locally homogeneous geometric structure along the lines of those used in Thurston's geometrization conjecture.

One particularly important tool for Cartan geometries is the idea of development, which allows us to compare paths within a geometry with corresponding paths within the model.

\begin{definition}Suppose $\gamma:[0,1]\to\mathscr{G}$ is a path in a Cartan geometry $(\mathscr{G},\omega)$ of type $(G,H)$. Then, the \emph{development} of $\gamma$ on $G$ is the unique path $\gamma_{{}_G}:[0,1]\to G$ starting at the identity in $G$ such that $\gamma^*\omega=\gamma_{{}_G}^*\MC{G}$.\end{definition}

\begin{figure}
\centering\includegraphics[width=0.8\textwidth]{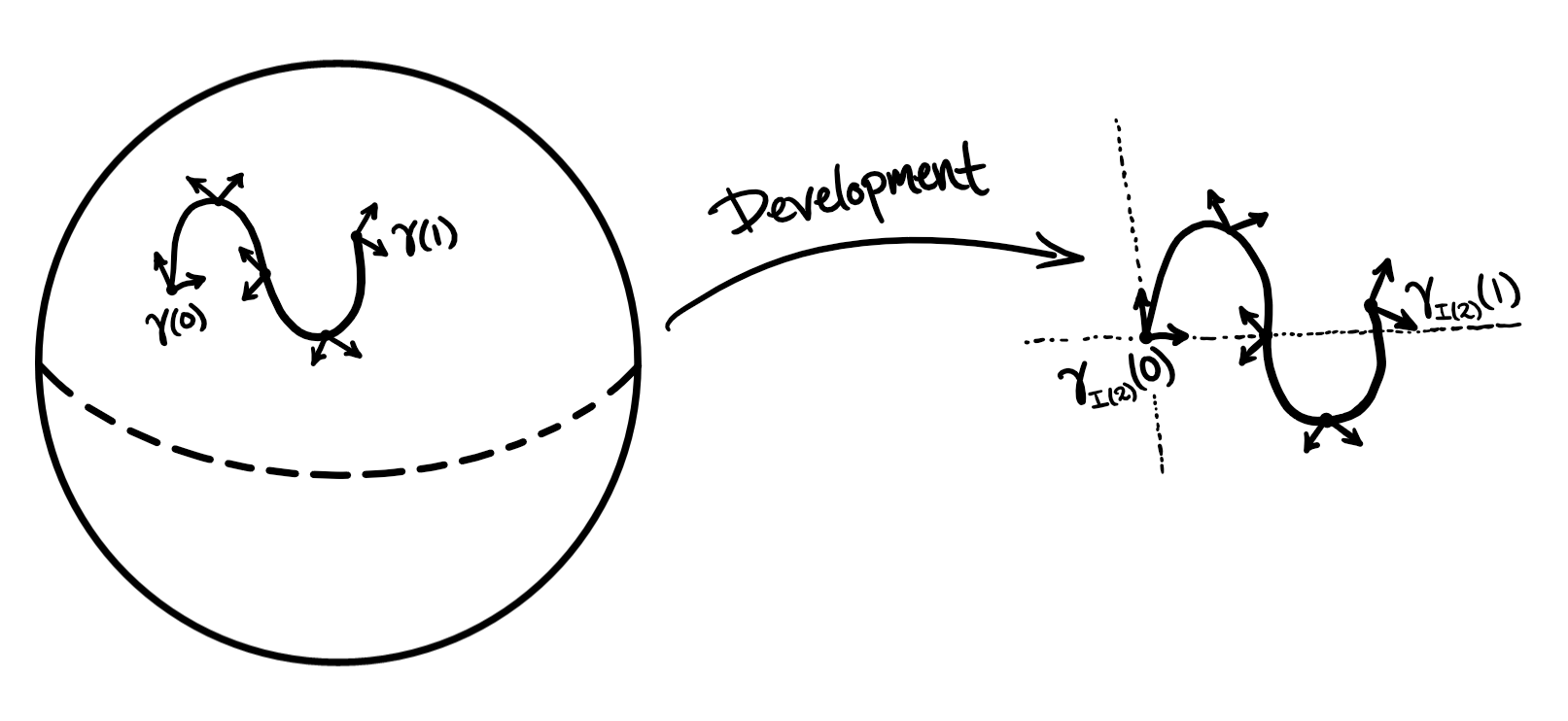}
\caption{A drawing of a path in the orthonormal frame bundle of the sphere equipped with a Cartan connection of type $(\Iso(2),\Orth(2))$, and its development in $\Iso(2)$ (thought of as the orthonormal frame bundle over the Euclidean plane)}
\label{devpic}
\end{figure}

Intuitively, $\gamma$ gives us instructions for a path on $\mathscr{G}$ in terms of our Cartan connection, and $\gamma_{{}_G}$ is the path we take if we try to follow these same instructions using the Maurer-Cartan form on $G$, starting at the identity. For example, if $\gamma$ is already a path on $G$, then $\gamma_{{}_G}$ is just the left-translate $\Lt{\gamma(0)^{-1}}\circ\,\gamma$.\vspace{.5em}

We now turn to the machinery described in \cite{CGH}. Given a Cartan geometry $(\mathscr{G},\omega)$ of type $(G,H)$ over $M$, we can construct a principal $G$-bundle $\widehat{\mathscr{G}}=\mathscr{G}\times_H G$ and a principal connection $\widehat{\omega}$ on $\widehat{\mathscr{G}}$ that is characterized by $j^*\widehat{\omega}=\omega$, where $j:\mathscr{G}\to\widehat{\mathscr{G}},~\mathscr{g}\mapsto(\mathscr{g},e)$. This allows us to define an extrinsic notion of holonomy.

\begin{definition}The \emph{extrinsic holonomy group} of $(\mathscr{G},\omega)$ at $\widehat{u}\in\widehat{\mathscr{G}}$ is the holonomy group $\Hol_{\widehat{u}}(\widehat{\mathscr{G}},\widehat{\omega})$ of the principal connection $\widehat{\omega}$ at $\widehat{u}$.\end{definition}

In \cite{BaumJuhl} and \cite{Sharpe}, a more natural definition of holonomy was given, where the holonomy group was defined to be the subgroup of $G$ whose elements were developments along loops within the Cartan geometry. While this idea is on the right track, it has several issues that make it less viable than the extrinsic holonomy given above. For example, this alternative definition does not capture the orientation-reversing part of the holonomy of the locally Euclidean Klein bottle, since the corresponding loops on the Klein bottle do not lift to loops in the orthonormal frame bundle. In the next section, we will give a corrected version of this idea.

With holonomy in hand, we can define the notion of holonomy reduction used in \cite{CGH}.

\begin{definition}Suppose $(\mathscr{G},\omega)$ is a Cartan geometry of type $(G,H)$ over $M$ and $X$ is a homogeneous space for $G$. Then, a \emph{holonomy reduction of $G$-type $X$} for $(\mathscr{G},\omega)$ is defined to be a section $\sigma:M\to\widehat{\mathscr{G}}\times_GX$ that is parallel with respect to the connection induced by $\widehat{\omega}$.\end{definition}

This is called a holonomy reduction because, for $X=G/J$, it is equivalent to a reduction of structure group on $\widehat{\mathscr{G}}$ from $G$ to $J$ such that $\widehat{\omega}$ pulls back to a principal $J$-connection under the corresponding inclusion, and the smallest choice of $J$ is the holonomy group. In other words, it is equivalent to a holonomy reduction of the extended principal bundle with the extended principal connection.

Of course, a section $\sigma$ of $\widehat{\mathscr{G}}\times_GX$ is equivalent to a $G$-equivariant map $\sigma:\widehat{\mathscr{G}}\to X$, and both are equivalent to an $H$-equivariant map $\sigma:\mathscr{G}\to X$; we will follow the practice of not distinguishing between these maps.

We can now define curved orbits as collections of points whose fibers in $\mathscr{G}$ have the same image under a holonomy reduction $\sigma$.

\begin{definition}Suppose $(\mathscr{G},\omega)$ is a Cartan geometry of type $(G,H)$ over $M$ together with a holonomy reduction $\sigma:\mathscr{G}\to X$. The \emph{$H$-type} of a point $p\in M$ is the image $\sigma(q_{{}_H}^{-1}(p))\subseteq X$ of the fiber of $q_{{}_H}:\mathscr{G}\to M$ over $p$, and the collection of all $p\in M$ with the same $H$-type is called a \emph{curved orbit}.\end{definition}

From this definition, it is not immediately apparent whether a curved orbit is anything more than a subset of $M$. Nevertheless, it was proved in \cite{CGH} that each curved orbit is an initial submanifold of $M$, and that each of them carries a canonical geometric structure. Later, we will give an alternative perspective that makes this inherent structure obvious.

\section{Intrinsic holonomy}\label{intrinsicholonomy}
We begin with a corrected definition of holonomy.

\begin{definition}Suppose $(\mathscr{G},\omega)$ is a Cartan geometry of type $(G,H)$ over $M$. The \emph{(intrinsic) holonomy} with respect to $\mathscr{g}\in\mathscr{G}$ of a loop $\gamma:[0,1]\to M$ such that $\gamma(0)=\gamma(1)=q_{{}_H}(\mathscr{g})$ is the element $\widehat{\gamma}_{{}_G}(1)h_{\widehat{\gamma}}\in G$, where $\widehat{\gamma}$ is a lift of $\gamma$ to $\mathscr{G}$ starting at $\widehat{\gamma}(0)=\mathscr{g}$ and $h_{\widehat{\gamma}}\in H$ is such that $\widehat{\gamma}(1)h_{\widehat{\gamma}}=\mathscr{g}=\widehat{\gamma}(0)$. The \emph{(intrinsic) holonomy group} $\Hol_\mathscr{g}(\mathscr{G},\omega)$ at $\mathscr{g}$ is the subgroup of $G$ consisting of all such holonomies. In other words, \[\Hol_\mathscr{g}(\mathscr{G},\omega):=\left\{\gamma_{{}_G}(1)h\in G: \begin{matrix}h\in H\text{ and }\gamma:[0,1]\to\mathscr{G}\\ \text{ such that }\gamma(0)=\mathscr{g}=\gamma(1)h\end{matrix}\right\}.\]\end{definition}

Note that the holonomy of a loop $\gamma$ in the base manifold only depends on that loop and the chosen basepoint $\mathscr{g}\in\mathscr{G}$, since if $\widehat{\gamma}$ is our chosen lift starting at $\mathscr{g}$, then every other lift starting at $\mathscr{g}$ will be of the form $\Rt{\beta}(\widehat{\gamma})$ for some path $\beta$ in $H$ starting at the identity, and $\Rt{\beta}(\widehat{\gamma})_{{}_G}=\Rt{\beta}(\widehat{\gamma}_{{}_G})$. Furthermore, this definition remains natural while still fixing the issues in the definitions given by \cite{BaumJuhl} and \cite{Sharpe}. Moreover, it is actually equivalent to the extrinsic holonomy.

\begin{proposition}For every Cartan geometry $(\mathscr{G},\omega)$ of type $(G,H)$, \[\Hol_\mathscr{g}(\mathscr{G},\omega)=\Hol_{(\mathscr{g},e)}(\widehat{\mathscr{G}},\widehat{\omega}).\]\end{proposition}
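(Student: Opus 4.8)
The plan is to translate between developments of paths in $\mathscr{G}$ and $\widehat\omega$-horizontal lifts in the extended bundle $\widehat{\mathscr{G}}$, and then simply read off and compare endpoints. The technical heart is a single computation identifying the horizontal lifts explicitly in terms of developments, after which the set equality of the two holonomy groups drops out.

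First, I would fix the concrete model $\widehat{\mathscr{G}} = \mathscr{G}\times_H G$, with right $G$-action $[\mathscr{g},g]\cdot g' = [\mathscr{g}, gg']$ and relation $(\mathscr{g}h, g)\sim(\mathscr{g}, hg)$, so that $j(\mathscr{g}) = [\mathscr{g}, e]$. The key claim is: for any path $\gamma:[0,1]\to\mathscr{G}$ with development $\gamma_{{}_G}$, the curve $\widehat\delta(t) := j(\gamma(t))\cdot\gamma_{{}_G}(t)^{-1} = [\gamma(t),\gamma_{{}_G}(t)^{-1}]$ is $\widehat\omega$-horizontal and starts at $j(\gamma(0))$. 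To prove this I would apply the standard transformation rule for a principal connection along a curve $u(t)\cdot g(t)$, namely $\widehat\omega(\tfrac{d}{dt}(u(t)g(t))) = \Ad_{g(t)^{-1}}\widehat\omega(u'(t)) + \MC{G}(g'(t))$, together with $j^*\widehat\omega = \omega$ (which gives $\widehat\omega(\tfrac{d}{dt}j(\gamma(t))) = \omega(\gamma'(t))$) and the defining relation of development $\omega(\gamma'(t)) = \MC{G}(\gamma_{{}_G}'(t))$. Taking $g(t) = \gamma_{{}_G}(t)^{-1}$ and using $\MC{G}(\tfrac{d}{dt}\gamma_{{}_G}(t)^{-1}) = -\Ad_{\gamma_{{}_G}(t)}\MC{G}(\gamma_{{}_G}'(t))$, the two terms cancel, proving horizontality; the condition $\gamma_{{}_G}(0)=e$ supplies the initial value.

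With this correspondence in hand, the proposition follows by matching endpoints. Given an intrinsic holonomy element $\gamma_{{}_G}(1)h$, where $\gamma(0)=\mathscr{g}=\gamma(1)h$, the projection $q_{{}_H}\circ\gamma$ is a loop at $q_{{}_H}(\mathscr{g})$ whose $\widehat\omega$-horizontal lift from $(\mathscr{g},e)$ is exactly $\widehat\delta$; its endpoint is $[\gamma(1),\gamma_{{}_G}(1)^{-1}] = [\mathscr{g}h^{-1},\gamma_{{}_G}(1)^{-1}] = [\mathscr{g}, h^{-1}\gamma_{{}_G}(1)^{-1}] = (\mathscr{g},e)\cdot(\gamma_{{}_G}(1)h)^{-1}$, so the extrinsic holonomy of the loop is $(\gamma_{{}_G}(1)h)^{-1}$. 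Conversely, any loop in $M$ at $q_{{}_H}(\mathscr{g})$ lifts to a path $\gamma$ in $\mathscr{G}$ from $\mathscr{g}$ with $\gamma(1) = \mathscr{g}h^{-1}$, and the same computation shows its extrinsic holonomy is $(\gamma_{{}_G}(1)h)^{-1}$ for the corresponding intrinsic element. Hence the extrinsic holonomy group is precisely the set of inverses of the intrinsic holonomy elements; since $\Hol_{(\mathscr{g},e)}(\widehat{\mathscr{G}},\widehat\omega)$ is a group, it coincides with its own set of inverses, and the two groups are equal.

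I would expect the main obstacle to be the bookkeeping in the horizontality computation: getting the $\Ad$ factors and the inverse of $\gamma_{{}_G}$ to cancel, and applying the transformation rule with the left Maurer-Cartan and right-action conventions used here. A secondary point worth stating carefully is the convention for the holonomy of a principal connection, since it determines whether the endpoint correspondence is the identity or inversion; either way the subgroup property of $\Hol_{(\mathscr{g},e)}(\widehat{\mathscr{G}},\widehat\omega)$ absorbs the discrepancy and yields the claimed equality.
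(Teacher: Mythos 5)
Your proof is correct and follows essentially the same route as the paper's: identify the $\widehat{\omega}$-horizontal lifts as $t\mapsto(\gamma(t),\gamma_{{}_G}(t)^{-1})$, read off the endpoint $(\mathscr{g},(\gamma_{{}_G}(1)h)^{-1})$, and absorb the resulting inversion using the subgroup property of the extrinsic holonomy group. The only difference is that you verify the horizontality claim by an explicit Maurer--Cartan computation, which the paper simply asserts.
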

\begin{proof}The horizontal curves of $(\widehat{\mathscr{G}},\widehat{\omega})$ are precisely those curves that take the form $t\mapsto (\gamma(t),(\gamma_{{}_G}(t))^{-1}g')$. Thus, if $\gamma:[0,1]\to\mathscr{G}$ is a lift of a loop in $M$ and $h\in H$ is such that $\gamma(0)=\gamma(1)h$, then the horizontal path $t\mapsto(\gamma(t),(\gamma_{{}_G}(t))^{-1})$ starts at $(\gamma(0),e)$ and ends at \[(\gamma(1),(\gamma_{{}_G}(1))^{-1})=(\gamma(1)h,h^{-1}(\gamma_{{}_G}(1))^{-1})=(\gamma(0),(\gamma_{{}_G}(1)h)^{-1}).\] Since $\Hol_{(\mathscr{g},e)}(\widehat{\mathscr{G}},\widehat{\omega})$ is a subgroup, $(\Hol_{(\mathscr{g},e)}(\widehat{\mathscr{G}},\widehat{\omega}))^{-1}=\Hol_{(\mathscr{g},e)}(\widehat{\mathscr{G}},\widehat{\omega})$, so the extrinsic holonomy group is equal to the intrinsic holonomy group.\mbox{\qedhere}\end{proof}

The straightforwardness of this proof makes it particularly difficult for the author to believe that our small adjustment to the definition of holonomy used in \cite{BaumJuhl} and \cite{Sharpe} was not present in any of the previous literature that he could find. Nonetheless, while there are results in the literature\footnote{See, for example, Proposition 2.1.1 of \cite{BaumJuhl}, which notes that the subgroups corresponding to holonomy from contractible loops coincide.} that relate the extrinsic holonomy group to the holonomy group consisting of developments of closed loops in $\mathscr{G}$ that lift from loops in $M$, our definition does not seem to have been considered before in the form presented above.

\begin{figure}
\centering\includegraphics[width=0.5\textwidth]{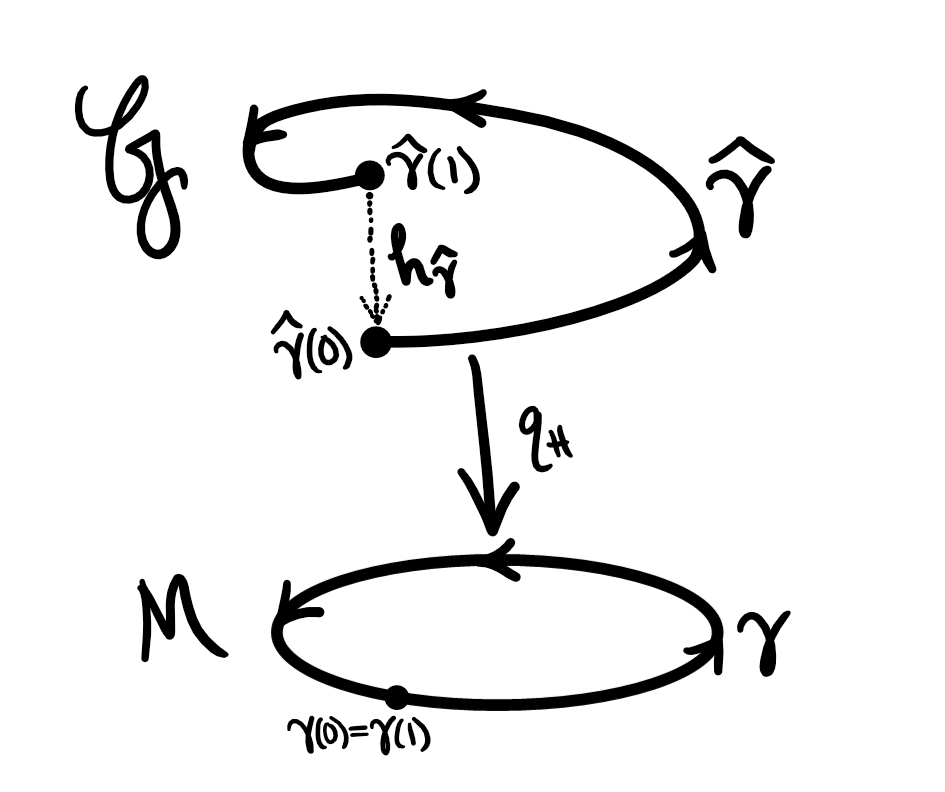}
\caption{Even if a loop $\gamma$ in $M$ does not lift to a loop in $\mathscr{G}$, we can still lift to a path $\widehat{\gamma}$ and then ``close the loop'' by right translating by an element of $H$. Developing along $\widehat{\gamma}$ and then right translating by that element of $H$ gives the intrinsic holonomy of $\gamma$ with respect to $\widehat{\gamma}(0)$.}
\end{figure}

In a similar way, we can also simplify what we mean by a holonomy reduction.

\begin{proposition}\label{basepoint} Suppose $M$ is connected. Then, a choice of basepoint $\mathscr{e}\in\mathscr{G}$ together with a closed subgroup $J\leq G$ containing the holonomy group $\Hol_\mathscr{e}(\mathscr{G},\omega)$ uniquely determines a holonomy reduction $\sigma:\mathscr{G}\to G/J$. Conversely, a holonomy reduction $\sigma:\mathscr{G}\to G/J$ determines a set of possible basepoints in $\mathscr{G}$.\end{proposition}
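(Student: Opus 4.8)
The plan is to build the holonomy reduction by hand out of developments, in direct analogy with reconstructing an equivariant map from parallel transport. I will work with the $H$-equivariant incarnation $\sigma:\mathscr{G}\to G/J$ and impose the normalization $\sigma(\mathscr{e})=eJ$, so that the basepoint is exactly the locus where $\sigma$ takes the base coset as its value. For an arbitrary $\mathscr{g}\in\mathscr{G}$, since $M$ is connected I can choose a path $\gamma$ in $\mathscr{G}$ starting at $\mathscr{e}$ and ending in the fiber $q_{{}_H}^{-1}(q_{{}_H}(\mathscr{g}))$ (by lifting a path in $M$), write $\mathscr{g}=\gamma(1)h$ with $h\in H$, and set $\sigma(\mathscr{g}):=h^{-1}(\gamma_{{}_G}(1))^{-1}J$. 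This formula is dictated by the parallel condition, so it is the only possible candidate; the hypothesis $\Hol_{\mathscr{e}}(\mathscr{G},\omega)\subseteq J$ will be exactly what makes it consistent, reflecting that the holonomy at $\mathscr{e}$ must stabilize $\sigma(\mathscr{e})=eJ$.

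Before checking that $\sigma$ is well-defined, I would record three elementary identities for developments, all immediate from the left-invariance of $\MC{G}$ and uniqueness of developments: the composition law $(\gamma_1*\gamma_2)_{{}_G}(1)=(\gamma_1)_{{}_G}(1)\,(\gamma_2)_{{}_G}(1)$, the reversal law $(\overline{\gamma})_{{}_G}(1)=(\gamma_{{}_G}(1))^{-1}$, and the right-translation law $(\Rt{h}\gamma)_{{}_G}(t)=h^{-1}\gamma_{{}_G}(t)\,h$, this last one using $\Rt{h}^*\omega=\Ad_{h^{-1}}\omega$ together with the behavior of $\MC{G}$ under conjugation.

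The crux is well-definedness. Given two choices $(\gamma,h)$ and $(\gamma',h')$ with $\gamma(1)h=\mathscr{g}=\gamma'(1)h'$, set $k=h(h')^{-1}\in H$ and splice $\gamma$ together with the right-translate $\Rt{k^{-1}}\overline{\gamma'}$ of the reversal of $\gamma'$. This produces a loop $\delta$ in $\mathscr{G}$ with $\delta(0)=\mathscr{e}=\delta(1)k$, so it is an intrinsic holonomy loop at $\mathscr{e}$, and the three identities compute its holonomy element as $\delta_{{}_G}(1)k=\gamma_{{}_G}(1)\,h(h')^{-1}(\gamma'_{{}_G}(1))^{-1}$. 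By hypothesis this lies in $\Hol_{\mathscr{e}}(\mathscr{G},\omega)\subseteq J$, and since $J$ is a subgroup this is precisely the statement that $h^{-1}(\gamma_{{}_G}(1))^{-1}J=(h')^{-1}(\gamma'_{{}_G}(1))^{-1}J$. I expect this splicing-and-cancellation computation to be the main obstacle: it is where the group hypothesis on $J$ and the precise $H$-correction built into the intrinsic holonomy both enter, and one must be careful that the correction elements conjugate through the development identities correctly. Once well-definedness is in hand, $H$-equivariance is read off from the formula, smoothness follows from smooth dependence of developments on their endpoints and local triviality of $\mathscr{G}$, and parallelism is verified by showing $\sigma(\gamma(t))=(\gamma_{{}_G}(t))^{-1}\sigma(\gamma(0))$ along an arbitrary path; equivalently $\gamma_{{}_G}(t)\sigma(\gamma(t))$ is constant, which by the description of horizontal curves in the preceding proposition is exactly constancy of the associated $G$-equivariant map along horizontal curves. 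Uniqueness is then forced: any parallel, $H$-equivariant map with $\sigma(\mathscr{e})=eJ$ must satisfy the same transport relation, and connectedness of $M$ propagates the normalization across all of $\mathscr{G}$.

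For the converse, a holonomy reduction $\sigma:\mathscr{G}\to G/J$ determines the set $\sigma^{-1}(eJ)$ of candidate basepoints. For any $\mathscr{g}$ in this set, transporting $\sigma$ around a holonomy loop based at $\mathscr{g}$ and comparing with $H$-equivariance shows, by the same computation as above, that every holonomy element $\gamma_{{}_G}(1)h$ stabilizes $eJ$, i.e. $\Hol_{\mathscr{g}}(\mathscr{G},\omega)\subseteq J$; thus $\mathscr{g}$ is an admissible basepoint, and by the uniqueness just established the forward construction from $(\mathscr{g},J)$ returns the original $\sigma$. In particular, for any reduction produced by the forward construction this set is nonempty and contains the basepoint used to build it, so the two directions are mutually inverse. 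An alternative, less intrinsic route would pass to $(\widehat{\mathscr{G}},\widehat{\omega})$ and invoke the classical reduction theorem for principal connections, using the preceding proposition to identify the two holonomy groups; but the development-based argument keeps everything inside the Cartan geometry, in keeping with the paper's viewpoint.
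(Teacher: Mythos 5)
Your proposal is correct and follows essentially the same route as the paper: define $\sigma(\gamma(1)h)=(\gamma_{{}_G}(1)h)^{-1}J$ for paths $\gamma$ starting at $\mathscr{e}$, use $\Hol_{\mathscr{e}}(\mathscr{G},\omega)\leq J$ to get well-definedness (the paper simply asserts that $\gamma_{1{}_G}(1)h_1(\gamma_{2{}_G}(1)h_2)^{-1}$ is an intrinsic holonomy element, which your splicing computation verifies explicitly), identify parallelism with constancy along horizontal curves, and recover the basepoints as $\sigma^{-1}(eJ)$. Your write-up is somewhat more detailed than the paper's, but there is no substantive difference in approach.
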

\begin{proof}Parallel sections of $\widehat{\mathscr{G}}\times_G(G/J)$ are precisely those which are constant on the horizontal curves. In other words, all holonomy reductions are of the form \[\sigma:(\gamma(1)h,g')\mapsto(g')^{-1}(\gamma_{{}_G}(1)h)^{-1}\sigma(\gamma(0),e).\] Thus, if we pick a point $\mathscr{e}\in\mathscr{G}$ and a closed subgroup $J\geq\Hol_\mathscr{e}(\mathscr{G},\omega)$, and we set $\sigma(\mathscr{e})=eJ$, then $\sigma$ is simply the map $\sigma:\gamma(1)h\mapsto(\gamma_{{}_G}(1)h)^{-1}J$, where $\gamma(0)=\mathscr{e}$. Note that this is a well-defined map, since every element of $\mathscr{G}$ can be written as $\gamma(1)h$ for a path $\gamma:[0,1]\to\mathscr{G}$ starting at $\mathscr{e}$, and if $\gamma_1(1)h_1=\gamma_2(1)h_2$, then $\gamma_{1{}_G}(1)h_1(\gamma_{2{}_G}(1)h_2)^{-1}\in\Hol_\mathscr{e}(\mathscr{G},\omega)\leq J$, so \[\sigma(\gamma_1(1)h_1)=(\gamma_{1{}_G}(1)h_1)^{-1}J=(\gamma_{2{}_G}(1)h_2)^{-1}J=\sigma(\gamma_2(1)h_2).\]

Conversely, if $\sigma:\mathscr{G}\to G/J$ is a holonomy reduction, then $\sigma^{-1}(eJ)$ gives the set of all basepoints which characterize $\sigma$ in the way described above.\mbox{\qedhere}\end{proof}

Note that, if $\sigma^{-1}(eJ)$ is empty, then we can use a $g\in G$ such that $\sigma^{-1}(g^{-1}J)$ is not empty; the corresponding closed subgroup will change from $J$ to $g^{-1}Jg$.

It is well-known in the principal bundle context that a choice of closed subgroup $J$ containing the holonomy group determines a holonomy reduction, so the existence of a result like Proposition \ref{basepoint} is not particularly surprising. However, viewed through the heuristic analogy where we pretend $(\mathscr{G},\omega)$ is a Lie group, the idea that a holonomy reduction just amounts to specifying a basepoint and a particular closed subgroup takes on additional meaning. Indeed, if we pretend that $\mathscr{e}$ is the ``identity element'' of $(\mathscr{G},\omega)$, then this allows us to specify a ``subgroup'' $\sigma^{-1}(eJ)$ of $(\mathscr{G},\omega)$, corresponding to $J\leq G$. In the next section, we will see that the ``subgroup'' $\sigma^{-1}(eJ)$ is an example of what we call a \emph{curved coset}.

\section{Curved cosets}\label{curvedcosets}
In this section, we will define curved cosets, describe some of their properties, and show how they are related to the curved orbits of \cite{CGH}. To start, however, we need a way to compare elements of a Cartan geometry with elements in its corresponding model.

\begin{definition}Suppose $(\mathscr{G},\omega)$ is a Cartan geometry of type $(G,H)$ and $\mathscr{e},\mathscr{g}\in\mathscr{G}$. We say that $g\in G$ is a \emph{development of $\mathscr{g}$ from $\mathscr{e}$} whenever there exists a path $\gamma:[0,1]\to\mathscr{G}$ and $h\in H$ such that $\gamma(0)=\mathscr{e}$, $\gamma(1)h=\mathscr{g}$, and $\gamma_{{}_G}(1)h=g$. When our choice of $\mathscr{e}$ is understood, we will just say that $g$ is a development of $\mathscr{g}$.\end{definition}

Unlike developments of paths, developments of points of $\mathscr{G}$ are not necessarily unique. Indeed, suppose $\alpha:[0,1]\to\mathscr{G}$ is a lift of a loop in $M$, so that $\alpha(0)=\mathscr{e}$ and $\alpha(1)=\mathscr{e}k$ for some $k\in H$. Then, if $\gamma:[0,1]\to\mathscr{G}$ is a path with $\gamma(0)=\mathscr{e}$ and $\gamma(1)h=\mathscr{g}$ for some $h\in H$, then concatenating $\alpha$ with the right-translate $\Rt{k}(\gamma)$ gives a path from $\mathscr{e}$ to $\mathscr{g}h^{-1}k$, so both $\gamma_{{}_G}(1)h$ and $\alpha_{{}_G}(1)(k^{-1}\gamma_{{}_G}(1)k)(h^{-1}k)^{-1}=(\alpha_{{}_G}(1)k^{-1})(\gamma_{{}_G}(1)h)$ are developments of $\mathscr{g}$ from $\mathscr{e}$. More succinctly, if $g$ is a development of $\mathscr{g}$ from $\mathscr{e}$, then so is every element of $\Hol_\mathscr{e}(\mathscr{G},\omega)g$.

Thus, in the heuristic analogy between $(\mathscr{G},\omega)$ and $G$, if we think of $\mathscr{e}$ as the ``identity element", then elements of $\mathscr{G}$ correspond to elements of $G$, the elements of $G$ to which they correspond are given by their developments from $\mathscr{e}$, and the holonomy group $\Hol_\mathscr{e}(\mathscr{G},\omega)$ tells us exactly if and how this correspondence fails to assign a unique group element.

In the Klein geometry $(G,\MC{G})$ of type $(G,H)$, where the holonomy group is always trivial, the unique development of $g_1\in G$ from $g_0\in G$ is $g_0^{-1}g_1$, so the (left) coset $g_0J$ is precisely the set of all $g_1$ with development from $g_0$ contained in $J$. Analogously, we define \emph{curved (left) cosets} of a Cartan geometry to be sets of points with developments contained in a given subgroup of $G$.

\begin{definition}\label{leftcoset} Suppose $(\mathscr{G},\omega)$ is a Cartan geometry of type $(G,H)$. Let $\mathscr{g}\in\mathscr{G}$ and let $J\leq G$ be a subgroup containing $\Hol_\mathscr{g}(\mathscr{G},\omega)$. Then, the \emph{curved (left) coset} $\mathscr{g}\bullet J$ through $\mathscr{g}$ is given by the set of all points in $\mathscr{G}$ with development from $\mathscr{g}$ contained in $J$. In other words, \[\mathscr{g}\bullet J:=\left\{\gamma(1)h:~\begin{matrix}h\in H\text{ and }\gamma:[0,1]\to\mathscr{G}\text{ such} \\ \text{that }\gamma(0)=\mathscr{g}\text{ and }\gamma_{{}_G}(1)h\in J\end{matrix}\right\}.\]\end{definition}

While the above definition works without the assumption that $J$ contains $\Hol_\mathscr{g}(\mathscr{G},\omega)$, $\Hol_\mathscr{g}(\mathscr{G},\omega)$ is the smallest sensible candidate for a choice of $J$, since by the considerations above, $\mathscr{g}\bullet\{e\}=\mathscr{g}\bullet\Hol_\mathscr{g}(\mathscr{G},\omega)$. Therefore, unless we indicate otherwise, we will assume that curved (left) cosets $\mathscr{g}\bullet J$ are \emph{well-defined}, meaning that $\Hol_\mathscr{g}(\mathscr{G},\omega)\leq J$.

For a given $\mathscr{g}\in\mathscr{G}$, $\mathscr{g}\bullet J$ being well-defined does not necessarily imply that $\mathscr{g'}\bullet J$ is well-defined for $\mathscr{g}'\not\in\mathscr{g}\bullet J$. For $\gamma:[0,1]\to\mathscr{G}$ a path starting at $\mathscr{g}$ such that $\gamma(1)h'=\mathscr{g}'$, the curved coset $\mathscr{g}'\bullet J$ is well-defined if and only if $J$ contains $\Hol_{\mathscr{g}'}(\mathscr{G},\omega)=(\gamma_{{}_G}(1)h')^{-1}\Hol_\mathscr{g}(\mathscr{G},\omega)\gamma_{{}_G}(1)h'$. In other words, $\mathscr{g}'\bullet J$ is well-defined for all $\mathscr{g}'\in\mathscr{G}$ if and only if $\Hol_\mathscr{g}(\mathscr{G},\omega)$ is contained in every subgroup that is conjugate to $J$ by an element of the form $\gamma_{{}_G}(1)h'$. Because such conditions on the holonomy can be quite restrictive, we will also introduce a weaker notion of curved coset.


\begin{definition}Suppose $(\mathscr{G},\omega)$ is a Cartan geometry of type $(G,H)$ and $\mathscr{g}\in\mathscr{G}$. Let $J\leq G$ be a subgroup such that every path component of $J$ intersects $H$. Then, the \emph{geometric leaf} $\mathscr{g}\circ J$ through $\mathscr{g}$ is given by \[\mathscr{g}\circ J:=\left\{\gamma(1)h:~\begin{matrix}h\in H\cap J\text{ and }\gamma:[0,1]\to\mathscr{G}\text{ such} \\ \text{that }\gamma(0)=\mathscr{g}\text{ and }\gamma_{{}_G}([0,1])\subseteq J\end{matrix}\right\}.\]\end{definition}

Similar to the above, we will say that $\mathscr{g}\circ J$ is \emph{well-defined} if and only if every path component of $J$ intersects $H$ and $\Omega^\omega_{\mathscr{g}'}(\mathfrak{j}\wedge\mathfrak{j})\subseteq\mathfrak{j}$ for all $\mathscr{g'}\in\mathscr{g}\circ J$, where $\Omega^\omega$ is the curvature of $(\mathscr{G},\omega)$. Again, we could do without this assumption, but there is no real benefit to doing so: for $J'$ the subgroup of $J$ generated by $J\cap H$ and the identity path component $J^\circ$, $\mathscr{g}\circ J=\mathscr{g}\circ J'$, and since $\Omega^\omega(X\wedge Y)=[X,Y]-\omega([\omega^{-1}(X),\omega^{-1}(Y)])$, the condition $\Omega^\omega_{\mathscr{g}'}(\mathfrak{j}\wedge\mathfrak{j})\subseteq\mathfrak{j}$ for all $\mathscr{g}'\in\mathscr{g}\circ J$ just says that $\mathscr{g}\circ J$ is a union of integral manifolds for the distribution $\omega^{-1}(\mathfrak{j})$. Thus, unless we indicate otherwise, we will assume that our geometric leaves are well-defined whenever we refer to them.

As an example, for every $K\leq H$, $\mathscr{g}\circ K=\mathscr{g}K=\{\mathscr{g}h:h\in K\}$ is always well-defined. Moreover, if we are working over the Klein geometry $(G,\MC{G})$, then the geometric leaf $g\circ J$ will be the coset $gJ'$, where $J'$ is the subgroup generated by the identity path component $J^\circ$ and $J\cap H$, so since we assume $g\circ J$ is well-defined, $J'=J$ and $g\circ J=gJ=\{gj:j\in J\}$.


From their definitions, it is clear that $\mathscr{g}\bullet J$ and $\mathscr{g}\circ J$ are initial submanifolds of $\mathscr{G}$ when they are well-defined and $J$ is a closed subgroup of $G$, and they inherit the structure of a Cartan geometry of type $(J,H\cap J)$ over the initial submanifolds $q_{{}_{H\cap J}}(\mathscr{g}\bullet J)$ and $q_{{}_{H\cap J}}(\mathscr{g}\circ J)$ of $\mathscr{G}/(H\cap J)$, respectively, by pulling back by the inclusion map. Moreover, if $\mathscr{g}h\in\mathscr{g}\bullet J$ for some $h\in H$, then by definition developments of $\mathscr{g}h$ from $\mathscr{g}$ are contained in $J$, so $h\in H\cap J$. Thus, we can naturally identify $q_{{}_{H\cap J}}(\mathscr{g}\bullet J)$ with $q_{{}_H}(\mathscr{g}\bullet J)$.

We can actually say a bit more about the topology of $\mathscr{g}\bullet J$.

\begin{proposition}Suppose $(\mathscr{G},\omega)$ is a Cartan geometry of type $(G,H)$. Let $\mathscr{g}\in\mathscr{G}$ and let $J\leq G$ be a closed subgroup containing $\Hol_\mathscr{g}(\mathscr{G},\omega)$. Then, $\mathscr{g}\bullet J$ is an embedded closed submanifold of $\mathscr{G}$.\end{proposition}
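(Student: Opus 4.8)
The plan is to identify $\mathscr{g}\bullet J$ with a single fiber of the holonomy reduction attached to it and then to show that this reduction is a submersion, so that the fiber is automatically a closed embedded submanifold. First I would let $M_0$ be the component of $M$ containing $q_{{}_H}(\mathscr{g})$ and restrict to $\mathscr{G}_0:=q_{{}_H}^{-1}(M_0)$, which is open and closed in $\mathscr{G}$ and contains $\mathscr{g}\bullet J$ (every $\gamma(1)h$ appearing in the definition projects into $M_0$). Applying Proposition \ref{basepoint} to $(\mathscr{G}_0,\omega)$ with basepoint $\mathscr{e}=\mathscr{g}$ produces a smooth map $\sigma$ sending $\mathscr{g}''$ to $g^{-1}J$, where $g$ is any development of $\mathscr{g}''$ from $\mathscr{g}$. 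The developments of a fixed $\mathscr{g}''$ form a coset of $\Hol_\mathscr{g}(\mathscr{G},\omega)$, which lies in $J$ as soon as one representative does; hence $\mathscr{g}''\in\mathscr{g}\bullet J$ precisely when $\sigma(\mathscr{g}'')=eJ$, i.e.\ $\mathscr{g}\bullet J=\sigma^{-1}(eJ)$. Closedness is then immediate: since $J$ is closed, $G/J$ is Hausdorff, so $\{eJ\}$ and its preimage $\sigma^{-1}(eJ)$ are closed (in $\mathscr{G}_0$, hence in $\mathscr{G}$).

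The core of the argument is to prove that $\sigma$ is a submersion. Fix $\mathscr{g}''$ with development $g_1$ and fix $X\in\mathfrak{g}$; I would take $c$ to be the integral curve of the constant vector field $\omega^{-1}(X)$ through $\mathscr{g}''$, whose own development is $t\mapsto\exp(tX)$ because $c^*\omega=X\,\mathrm{d}t=(t\mapsto\exp(tX))^*\MC{G}$. Concatenating a path that realizes $g_1$ with $c$, and using that the development of a concatenation $\alpha$ followed by $\beta$ is $\alpha_{{}_G}$ followed by $\alpha_{{}_G}(1)\cdot\beta_{{}_G}$, the development of $c(t)$ from $\mathscr{g}$ is $g_1\exp(tX)$, so that $\sigma(c(t))=\exp(-tX)\,g_1^{-1}J$. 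Differentiating at $t=0$, $d\sigma_{\mathscr{g}''}(\omega^{-1}(X))$ is the value at $g_1^{-1}J$ of the vector field on $G/J$ generated by $-X$ under the left $G$-action. As $\omega_{\mathscr{g}''}$ is an isomorphism, $\omega^{-1}(X)$ ranges over all of $T_{\mathscr{g}''}\mathscr{G}$ as $X$ ranges over $\mathfrak{g}$, and since $G$ acts transitively on $G/J$ these generators span every tangent space; hence $d\sigma_{\mathscr{g}''}$ is surjective. Thus $\sigma$ is a submersion, and by the regular value theorem $\sigma^{-1}(eJ)=\mathscr{g}\bullet J$ is an embedded submanifold, necessarily of dimension $\dim G-\dim(G/J)=\dim J$, consistent with its structure as a Cartan geometry of type $(J,H\cap J)$.

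The main obstacle is exactly the derivative computation, and in particular justifying the identity $\sigma(c(t))=\exp(-tX)\,g_1^{-1}J$: one must verify that the development of a concatenated path is the left translate, by the endpoint development of the first leg, of the development of the second, and that the integral curve of $\omega^{-1}(X)$ develops to the one-parameter subgroup $\exp(tX)$. Both facts follow from the defining relation $\gamma^*\omega=\gamma_{{}_G}^*\MC{G}$ together with the left invariance of $\MC{G}$, and they are the only non-formal inputs; everything afterward—submersivity from the infinitesimal transitivity of the $G$-action on $G/J$, and the passage to a closed embedded submanifold—is routine. It is worth noting that this submersion argument gives the embedded (rather than merely initial) conclusion directly, and recovers the expected dimension as a consistency check.
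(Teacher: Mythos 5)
Your argument is correct, but it takes a genuinely different route from the paper's. You realize $\mathscr{g}\bullet J$ as the fiber $\sigma^{-1}(eJ)$ of the holonomy reduction supplied by Proposition \ref{basepoint} (after restricting to the component of $M$ containing $q_{{}_H}(\mathscr{g})$), and then show $\sigma$ is a submersion by evaluating $d\sigma$ on the $\omega$-constant vector fields, so that the regular value theorem delivers closedness and embeddedness simultaneously, together with the expected dimension $\dim J$. The paper instead argues in two separate stages and entirely intrinsically: it first deduces closedness of $\mathscr{g}\bullet J$ from closedness of $J$ using the neighborhoods $\exp(\omega^{-1}(U))\mathscr{g}'$ and developments, and then obtains embeddedness by transplanting the sequence argument from the classical proof of the closed subgroup theorem (normalized vectors $v_i/\|v_i\|$ subconverging to a unit vector $v$, flows $\exp(\lfloor t/\|v_i\|\rfloor\omega^{-1}(v_i))\mathscr{g}'$ converging into the closed set $\mathscr{g}\bullet J$, forcing $v\in\mathfrak{j}$ and a contradiction). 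Your proof is shorter and gives the dimension for free, but it leans on the smoothness of the parallel section $\sigma$, i.e.\ on the extrinsic principal-bundle machinery behind Proposition \ref{basepoint}, which the paper is deliberately avoiding in this section; the paper's version is self-contained and purely intrinsic, in keeping with its stated aims. One small imprecision to repair: a path realizing the development $g_1$ of $\mathscr{g}''$ from $\mathscr{g}$ ends at $\mathscr{g}''h^{-1}$ for some $h\in H$ rather than at $\mathscr{g}''$, so you cannot literally concatenate it with $c$; you must first right-translate $c$ by $h^{-1}$, use $\Rt{h^{-1}}^*\omega=\Ad_h\omega$ to see that the translated curve develops to $\exp(t\Ad_hX)$, and then observe that the conjugation cancels against the trailing factor of $h$, so your formula $g_1\exp(tX)$ for the development of $c(t)$, and hence the rest of the computation, still stands.
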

\begin{proof}We will show that $\mathscr{g}\bullet J$ is closed because $J$ is closed, and then we will show that, just as for Lie groups, $\mathscr{g}\bullet J$ being closed implies that $\mathscr{g}\bullet J$ is an embedded submanifold.

For a subset $U\subseteq\mathfrak{g}$, we will use the notation $\exp(\omega^{-1}(U))\mathscr{g}'$ to denote the subset $\{\exp(\omega^{-1}(X))\mathscr{g}':X\in U\}\subseteq\mathscr{G}$. Note that, for $U$ open and sufficiently small, $\exp(\omega^{-1}(U))\mathscr{g}'$ is open. Suppose that, for every open $U\subseteq\mathfrak{g}$ containing $0$, $\exp(\omega^{-1}(U))\mathscr{g}'\cap\mathscr{g}\bullet J\neq\emptyset$. Let $g'\in G$ be a development of $\mathscr{g}'$ from $\mathscr{g}$. Then, elements of $\exp(\omega^{-1}(U))\mathscr{g}'$ have developments from $\mathscr{g}$ in $g'\exp(U)$, so since $\exp(\omega^{-1}(U))\mathscr{g}'\cap\mathscr{g}\bullet J\neq\emptyset$, $g'\exp(U)\cap J\neq\emptyset$ for every open $U\subseteq\mathfrak{g}$ containing $0$. Because $J$ is closed, this implies $g'\in J$, hence $\mathscr{g}'\in\mathscr{g}\bullet J$. Thus, $\mathscr{g}\bullet J$ is closed.

The rest of the proof is essentially the same as the standard proof that closed subgroups are embedded submanifolds. Suppose, by way of contradiction, that for some $\mathscr{g}'\in\mathscr{g}\bullet J$, $\exp(\omega^{-1}(U))\mathscr{g}'\cap\mathscr{g}\bullet J\not\subseteq\exp(\omega^{-1}(U\cap\mathfrak{j}))\mathscr{g}'$ for all open neighborhoods $U$ of $0$ in $\mathfrak{g}$. Consider a sequence $U_1\supseteq U_2\supseteq\cdots$ of neighborhoods of $0$ in $\mathfrak{g}$ such that $\cap_{i=0}^\infty U_i=\{0\}$. Then, for each $i$, we get an $\mathscr{a}_i\in(\exp(\omega^{-1}(U_i))\mathscr{g}'\cap\mathscr{g}\bullet J)\setminus\exp(\omega^{-1}(U_i\cap\mathfrak{j}))\mathscr{g}'$ and, decomposing $\mathfrak{g}$ (as a vector space) as $\mathfrak{g}=V\oplus\mathfrak{j}$, we can choose $Z_i\in\mathfrak{j}$ and $v_i\in V\setminus\{0\}$ such that $Z_i\rightarrow 0$, $v_i\rightarrow 0$, and $\mathscr{a}_i=\exp(\omega^{-1}(Z_i))\exp(\omega^{-1}(v_i))\mathscr{g}'$ for sufficiently small $U_i$. Since $\mathscr{a}_i\in\mathscr{g}\bullet J$, we have that $\exp(\omega^{-1}(v_i))\mathscr{g}'=\exp(\omega^{-1}(Z_i))^{-1}\mathscr{a}_i\in\mathscr{g}\bullet J$.

Give $V$ a norm $\|\cdot\|$. Since $v_i$ is nonzero for each $i$, we can consider $\frac{1}{\|v_i\|}v_i$, which subconverges to some $v$ of norm $1$ in $V$ because the unit sphere in $V$ is compact. Since \[\left\lfloor\frac{t}{\|v_i\|}\right\rfloor v_i=\left(\left\lfloor\frac{t}{\|v_i\|}\right\rfloor\|v_i\|\right)\left(\frac{1}{\|v_i\|}v_i\right)\rightarrow tv,\] where $\lfloor\cdot\rfloor$ is the floor function, it follows that $\exp(\lfloor\frac{t}{\|v_i\|}\rfloor\omega^{-1}(v_i))\mathscr{g}'$ subconverges to $\exp(t\omega^{-1}(v))\mathscr{g}'$. As shown above, $\exp(\omega^{-1}(v_i))\mathscr{g}'\in\mathscr{g}\bullet J$, hence $\exp(v_i)\in J$, so \[\exp\left(\left\lfloor\frac{t}{\|v_i\|}\right\rfloor\omega^{-1}(v_i)\right)\mathscr{g}'=\exp(\omega^{-1}(v_i))^{\lfloor\frac{t}{\|v_i\|}\rfloor}\mathscr{g}'\in\mathscr{g}\bullet J.\] But $\mathscr{g}\bullet J$ is closed, so $\exp(t\omega^{-1}(v))\mathscr{g}'\in\mathscr{g}\bullet J$ for all $t$, so $v\in V\cap\mathfrak{j}=\{0\}$, which contradicts $\|v\|=1$. Thus, for all $\mathscr{g}'\in\mathscr{g}\bullet J$, $\exp(\omega^{-1}(U))\mathscr{g}'\cap\mathscr{g}\bullet J\subseteq\exp(\omega^{-1}(U\cap\mathfrak{j}))\mathscr{g}'$ for some open neighborhood $U$ of $0$ in $\mathfrak{g}$, so $\mathscr{g}\bullet J$ is an embedded submanifold.\mbox{\qedhere}\end{proof}


If we fix a point $\mathscr{e}\in\mathscr{G}$ that we think of as the ``identity element", then both $\mathscr{e}\bullet J$ and $\mathscr{e}\circ J$ can reasonably be thought of as ``subgroups" of $(\mathscr{G},\omega)$ when they are well-defined. Moreover, once we have chosen an ``identity element" for $(\mathscr{G},\omega)$, we can assign developments from that point, and using this, we can define \emph{curved right cosets} by analogy to $Jg=g(g^{-1}Jg)$.


\begin{definition}Suppose $(\mathscr{G},\omega)$ is a Cartan geometry of type $(G,H)$. Fix a point $\mathscr{e}\in\mathscr{G}$, and let $J\leq G$ be a subgroup containing $\Hol_\mathscr{e}(\mathscr{G},\omega)$. For $g\in G$ a development of $\mathscr{g}$ from $\mathscr{e}$, the \emph{curved (right) coset} $J\bullet\mathscr{g}$ through $\mathscr{g}\in\mathscr{G}$ is given by \[J\bullet\mathscr{g}:=\mathscr{g}\bullet(g^{-1}Jg).\]\end{definition}


Here, again, we might as well always require that $g^{-1}Jg$ contains $\Hol_\mathscr{g}(\mathscr{G},\omega)$, since nothing is gained by assuming otherwise. However, since $\Hol_\mathscr{e}(\mathscr{G},\omega)=g\Hol_\mathscr{g}(\mathscr{G},\omega)g^{-1}$, this just means that $J$ has to contain the holonomy group at $\mathscr{e}$, and if this holds, then curved right cosets through $\mathscr{g}$ are well-defined independent of any condition on $\mathscr{g}$.

Note that, in order to define these curved right cosets, we must choose a basepoint $\mathscr{e}\in\mathscr{G}$, so by Proposition \ref{basepoint}, if $J$ is closed, then the data needed to define a curved right coset is precisely the data needed to define a holonomy reduction. In terms of the holonomy reduction $\sigma:\mathscr{G}\to G/J$ induced by the choice of basepoint, if $g$ is a development of $\mathscr{g}$, then \begin{align*}J\bullet\mathscr{g} & =\mathscr{g}\bullet(g^{-1}Jg) \\ & =\left\{\beta(1)h:~\begin{matrix}h\in H\text{ and }\beta:[0,1]\to\mathscr{G}\text{ such that} \\ \beta(0)=\mathscr{g}\text{ and }\beta_{{}_G}(1)h\in g^{-1}Jg\end{matrix}\right\} \\ & =\left\{\gamma(1)h:~\begin{matrix}h\in H\text{ and }\gamma:[0,1]\to\mathscr{G}\text{ such that} \\ \gamma(0)=\mathscr{e}\text{ and }\gamma_{{}_G}(1)h\in Jg\end{matrix}\right\} \\ & =\sigma^{-1}(g^{-1}J).\end{align*} Moreover, $q_{{}_H}(J\bullet\mathscr{g})=q_{{}_H}((J\bullet\mathscr{g})H)=q_{{}_H}(\sigma^{-1}(Hg^{-1}J))$, and $q_{{}_H}(\sigma^{-1}(Hg^{-1}J))$ is the set of all points on the base manifold with $H$-type $Hg^{-1}J$, so curved orbits are just the projections of curved right cosets to the base manifold.

Interestingly, curved cosets are so well-behaved that we can even consider the corresponding quotient manifolds.

\begin{proposition}Suppose $(\mathscr{G},\omega)$ is a Cartan geometry of type $(G,H)$. Fix $\mathscr{e}\in\mathscr{G}$ and a closed subgroup $J\leq G$ containing $\Hol_\mathscr{e}(\mathscr{G},\omega)$. Consider the map $\pi_{{}_J}:\mathscr{G}\to J\backslash\mathscr{G}, \mathscr{g}\mapsto J\bullet\mathscr{g}$, where $J\backslash\mathscr{G}$ is the space of curved right cosets under the quotient topology given by $\pi_{{}_J}$. Then, $J\backslash\mathscr{G}$ admits a (necessarily unique) smooth manifold structure such that $\pi_{{}_J}$ is a submersion.\end{proposition}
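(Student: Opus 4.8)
The plan is to recognize the quotient $J\backslash\mathscr{G}$ as the image of the holonomy reduction $\sigma\colon\mathscr{G}\to G/J$ determined by $\mathscr{e}$ and $J$, and to show that $\sigma$ is a submersion; the smooth structure on $J\backslash\mathscr{G}$ will then be inherited from the open subset $\sigma(\mathscr{G})\subseteq G/J$. Recall from the computation preceding the statement that $J\bullet\mathscr{g}=\sigma^{-1}(g^{-1}J)=\sigma^{-1}(\sigma(\mathscr{g}))$ whenever $g$ is a development of $\mathscr{g}$ from $\mathscr{e}$. Hence the curved right cosets are precisely the fibers of $\sigma$, and two points of $\mathscr{G}$ have the same image under $\pi_{{}_J}$ if and only if they have the same image under $\sigma$. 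Consequently $\sigma$ factors as $\sigma=\bar\sigma\circ\pi_{{}_J}$ for a bijection $\bar\sigma\colon J\backslash\mathscr{G}\to\sigma(\mathscr{G})$, and the problem reduces to putting a manifold structure on $\sigma(\mathscr{G})$ and transporting it across $\bar\sigma$.

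The key step is to show that the (smooth) holonomy reduction $\sigma$ is a submersion by computing its differential. Fix $\mathscr{g}\in\mathscr{G}$ with development $g$ from $\mathscr{e}$, and let $X\in\mathfrak{g}$. Developing the flow line $t\mapsto\exp(t\omega^{-1}(X))\mathscr{g}$ appends a factor $\exp(tX)$ on the right of the development, so that $\sigma(\exp(t\omega^{-1}(X))\mathscr{g})=(g\exp(tX))^{-1}J=\exp(-tX)\,g^{-1}J$. Differentiating at $t=0$ shows that $\omega^{-1}(X)|_\mathscr{g}$ is sent by $T_\mathscr{g}\sigma$ to the value at $\sigma(\mathscr{g})=g^{-1}J$ of the fundamental vector field of $-X$ for the left $G$-action on $G/J$. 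Since $\omega_\mathscr{g}$ is a linear isomorphism and the fundamental vector fields of a transitive action span the tangent space at every point, $T_\mathscr{g}\sigma$ is surjective. Thus $\sigma$ is a submersion, hence an open map, so $\sigma(\mathscr{G})$ is an open submanifold of $G/J$.

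It remains to transport this structure and verify the universal properties. I would declare $\bar\sigma$ a diffeomorphism, thereby giving $J\backslash\mathscr{G}$ a smooth structure. Since $\sigma$ is a continuous open surjection onto $\sigma(\mathscr{G})$ whose fibers coincide with those of $\pi_{{}_J}$, it is a topological quotient map onto its image and $\bar\sigma$ is a homeomorphism for the quotient topology, so this smooth structure is compatible with the given topology on $J\backslash\mathscr{G}$. Then $\pi_{{}_J}=\bar\sigma^{-1}\circ\sigma$ is a submersion, being a submersion followed by a diffeomorphism. Uniqueness is the standard rigidity of surjective submersions: if two smooth structures on $J\backslash\mathscr{G}$ both make $\pi_{{}_J}$ a submersion, then the identity map between them composed with $\pi_{{}_J}$ equals $\pi_{{}_J}$, which by the universal property of submersions descends to a smooth map in each direction, forcing the two structures to agree.

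I expect the main obstacle to be the differential computation establishing that $\sigma$ is a submersion, and in particular the careful justification that developing a flow line right-translates the development and that the resulting tangent vector is a fundamental vector field of the $G$-action. Once surjectivity of $T_\mathscr{g}\sigma$ is in hand, the openness of the image together with the descent and uniqueness arguments are routine.
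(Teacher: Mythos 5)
Your argument is correct, but it takes a genuinely different route from the paper. The paper proves the result from scratch in the style of the classical construction of $J\backslash G$: it shows directly that $\pi_{{}_J}$ is an open map (by comparing developments $g'\exp(U')\subseteq Jg\exp(U)$), verifies the Hausdorff property using closedness of $J$, and then builds charts by hand from a complement $V$ to $\mathfrak{j}$ and the maps $(u,w)\mapsto\exp(\omega^{-1}(w))\exp(\omega^{-1}(u))\mathscr{e}$, transported around $\mathscr{G}$ by flows of $\omega$-constant fields and right $H$-translations. You instead invoke the holonomy reduction $\sigma:\mathscr{G}\to G/J$ of Proposition 3.2 together with the identity $J\bullet\mathscr{g}=\sigma^{-1}(g^{-1}J)$ established just before the statement, so that $\pi_{{}_J}$ becomes $\sigma$ followed by the induced bijection onto its image; the whole problem then reduces to the submersion computation $T_\mathscr{g}\sigma(\omega^{-1}(X)|_\mathscr{g})=-\zeta_X(g^{-1}J)$ with $\zeta_X$ the fundamental vector field of the transitive left $G$-action. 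Your computation of the development of a flow line is right ($g\exp(tX)$ is a development of $\exp(t\omega^{-1}(X))\mathscr{g}$, exactly as used in the paper's own openness argument), and the descent of topology and smooth structure along a pair of quotient maps with the same fibers is routine. What your approach buys: Hausdorffness comes for free from $G/J$, and you get the sharper conclusion that $J\backslash\mathscr{G}$ is diffeomorphic to an open subset of the homogeneous space $G/J$, which meshes nicely with the paper's subsequent remark on the induced flat Cartan connection when $J$ is normal. What the paper's approach buys is self-containedness: it does not presuppose smoothness of $\sigma$. You should add one sentence on that point --- smoothness of $\sigma$ follows from the same local formula $\sigma(\exp(\omega^{-1}(X))\mathscr{g})=\exp(-X)\sigma(\mathscr{g})$, since $X\mapsto\exp(\omega^{-1}(X))\mathscr{g}$ is a diffeomorphism from a neighborhood of $0$ in $\mathfrak{g}$ onto a neighborhood of $\mathscr{g}$ --- and note that, as with the paper's definition of curved right cosets themselves, defining $\sigma$ on all of $\mathscr{G}$ tacitly uses that every point of $\mathscr{G}$ is reachable from $\mathscr{e}$ by a path followed by an $H$-translation.
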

\begin{proof}We will use the notation $J\bullet N=\pi_{{}_J}^{-1}(\pi_{{}_J}(N))=\cup_{\mathscr{g}\in N}J\bullet\mathscr{g}$ for subsets $N\subseteq\mathscr{G}$. Given an open neighborhood $U$ of $0$ in $\mathfrak{g}$ that is small enough that $\exp(\omega^{-1}(U))\mathscr{g}$ is open and $\exp(\omega^{-1}(u))\mathscr{g}$ is well-defined for all $u\in U$, consider $\mathscr{g}'\in J\bullet(\exp(\omega^{-1}(U))\mathscr{g})$. Suppose $g$ is a development of $\mathscr{g}$ and $g'$ is a development of $\mathscr{g}'$. For sufficiently small $U'$ around $0$ in $\mathfrak{g}$ such that $\exp(\omega^{-1}(U'))\mathscr{g}'$ is open and $g'\exp(U')\subseteq Jg\exp(U)$, we get $\exp(\omega^{-1}(U'))\mathscr{g}'\subseteq J\bullet(\exp(\omega^{-1}(U))\mathscr{g})$, so $\mathscr{g}'$ is an interior point. This means that $J\bullet(\exp(\omega^{-1}(U))\mathscr{g})=\pi_{{}_J}^{-1}(\pi_{{}_J}(\exp(\omega^{-1}(U))\mathscr{g}))$ is open in $\mathscr{G}$, so since $J\backslash\mathscr{G}$ has the quotient topology, $\pi_{{}_J}(\exp(\omega^{-1}(U))\mathscr{g})$ is open in $J\backslash\mathscr{G}$. Thus, $\pi_{{}_J}$ is an open map.

Suppose $g$ is a development of $\mathscr{g}$ from $\mathscr{e}$ and $g'$ is a development of $\mathscr{g}'$ from $\mathscr{e}$. If $J\bullet(\exp(\omega^{-1}(U))\mathscr{g})\cap J\bullet(\exp(\omega^{-1}(V))\mathscr{g}')\neq\emptyset$ for all neighborhoods $U$ and $V$ of $0$ in $\mathfrak{g}$, then the corresponding developments $Jg\exp(U)\cap Jg'\exp(V)\neq\emptyset$ for all neighborhoods $U$ and $V$ of $0$ in $\mathfrak{g}$, so since $J$ is closed, $Jg=Jg'$, so $J\bullet\mathscr{g}=J\bullet\mathscr{g}'$. Thus, $J\backslash\mathscr{G}$ is Hausdorff.

The rest of this proof is essentially the same as the standard proof for the homogeneous space $J\backslash G$. Pick a complementary subspace $V$ to $\mathfrak{j}$ in $\mathfrak{g}$ and $W$ a small enough neighborhood of $0$ in $V$ so that, for some neighborhood $U$ of $0$ in $\mathfrak{j}$, $\psi:U\times W\to\psi(U\times W)\subseteq\mathscr{G},~(u,w)\mapsto\exp(\omega^{-1}(w))\exp(\omega^{-1}(u))\mathscr{e}$ is an embedding. Thus, $\pi_{{}_J}\circ\psi|_{\{0\}\times W}$ is bijective, continuous, and open, so its inverse is a chart around $J\bullet\mathscr{e}$, and since we can push $\psi$ around all of $\mathscr{G}$ using flows by $\omega$-constant vector fields and right-translating by elements of $H$, this gives a smooth manifold structure on $J\backslash\mathscr{G}$.\mbox{\qedhere}\end{proof}

In particular, if $(\mathscr{G},\omega)$ is a Cartan geometry of type $(G,H)$, $\mathscr{e}\in\mathscr{G}$, and $J$ is a closed normal subgroup of $G$ containing $\Hol_\mathscr{e}(\mathscr{G},\omega)$, then $J\backslash\mathscr{G}$ is a principal $(H\cap J)\backslash H$-bundle and naturally inherits a Cartan connection $\bar{\omega}$ of type $(J\backslash G,(H\cap J)\backslash H)$ with trivial holonomy, given by $\pi_{{}_J}^*\bar{\omega}=\omega+\mathfrak{j}$.


\section{Relation to tractor connections}\label{tractorconnections}
When we want to study Lie groups, it is often useful to look at their representations. Naturally, then, when we want to study Cartan geometries, it might be useful to have a corresponding notion of representation. To that end, suppose we are given a Cartan geometry $(\mathscr{G},\omega)$ of type $(G,H)$ over a manifold $M$ and a finite-dimensional representation $\rho:G\to \GLin(V)$.

We would like to find a ``representation'' for $(\mathscr{G},\omega)$ corresponding to $\rho$. If we fix an ``identity element'' $\mathscr{e}\in\mathscr{G}$ and $\Hol_\mathscr{e}(\mathscr{G},\omega)\leq\ker(\rho)$, then we have a convenient answer: define $\rho(\mathscr{g}):=\rho(g)$ for each $\mathscr{g}\in\mathscr{G}$, where $g\in G$ is some development of $\mathscr{g}$ from $\mathscr{e}$. Unfortunately, this is generally not a useful assumption to make for $(\mathscr{G},\omega)$; if $G$ is a connected centerless simple Lie group, for example, then the only normal subgroups of $G$ are $\{e\}$ and $G$ itself, so if $\Hol_\mathscr{e}(\mathscr{G},\omega)$ is nontrivial, then this will only get us the trivial representation. Thus, while this is a good answer when it works, we would like something more general that does not have such stringent requirements on the holonomy.

Of course, the restriction $\rho|_H$ of $\rho$ to $H$ is still a representation. Thus, identifying $H$ with the ``subgroup" $\mathscr{e}\circ H=\mathscr{e}H$, we can try to form the ``(co)induced representation" of $\rho|_H$ to $(\mathscr{G},\omega)$, given by the space $\Gamma(\mathscr{G}\times_H V)$ of sections of the associated bundle $\mathscr{G}\times_H V$ over $M$, which we identify with the space of $H$-equivariant functions from $\mathscr{G}$ to $V$. In the model case, $G$ acts on $\Gamma(G\times_H V)$ by $g\cdot\sigma:=\sigma\circ\Lt{g^{-1}}$, and an isomorphic copy of the representation $\rho$ is contained in $\Gamma(G\times_H V)$, given by the subspace of sections of the form $\widetilde{v}:G\to V, g\mapsto\rho(g)^{-1}v$ with $v\in V$. This gives an identification of $G\times_H V$ with the trivial bundle $G/H\times V$, where the copy of $\rho$ in $\Gamma(G\times_H V)$ is precisely the subspace that is identified with the constant sections of $G/H\times V$.

In the general case, we lack a meaningful notion of action of $(\mathscr{G},\omega)$ on $\Gamma(\mathscr{G}\times_H V)$, and we do not necessarily have an isomorphism between $\mathscr{G}\times_H V$ and $M\times V$. However, under the identification of $G/H\times V$ and $G\times_H V$ in the model case, the exterior derivative for $G/H\times V$ determines a corresponding differential operator $\nabla^\rho:\Gamma(G\times_H V)\to \Gamma(T^\vee(G/H)\otimes (G\times_H V))$ given by, for $X\in T(G/H)$ and $\sigma\in\Gamma(G\times_H V)$, \[(\nabla^\rho\sigma)(X)=\nabla^\rho_X\sigma=\widehat{X}(\sigma)+\rho_*(\MC{G}(\widehat{X}))\sigma,\] where $\widehat{X}\in TG$ is an arbitrary lift of $X$ to $TG$ and $\rho_*:\mathfrak{g}\to\mathfrak{gl}(V)$ is the Lie algebra homomorphism induced by $\rho$, and the kernel of this differential operator is the copy of $\rho$. We can define a corresponding differential operator $\nabla^\rho:\Gamma(\mathscr{G}\times_H V)\to\Gamma(T^\vee M\otimes(\mathscr{G}\times_H V))$ given by, for $X\in TM$ and $\sigma\in\Gamma(\mathscr{G}\times_H V)$, \[(\nabla^\rho\sigma)(X)=\nabla^\rho_X\sigma:=\widehat{X}(\sigma)+\rho_*(\omega(\widehat{X}))\sigma,\] where $\widehat{X}\in T\mathscr{G}$ is an arbitrary lift of $X$ to $T\mathscr{G}$ and $\rho_*$ is, again, the Lie algebra homomorphism induced by $\rho$. This operator $\nabla^\rho$ is better known as the \emph{tractor connection} for $\rho$.

\begin{definition}Let $\rho:G\to\GLin(V)$ be a representation of $G$. Given a Cartan geometry $(\mathscr{G},\omega)$ of type $(G,H)$ over $M$, the \emph{tractor bundle} corresponding to $\rho$ is defined to be the associated bundle $\mathscr{G}\times_H V$. Identifying sections $\sigma$ of $\mathscr{G}\times_H V$ with $H$-equivariant functions $\sigma:\mathscr{G}\to V$, the \emph{tractor connection} $\nabla^\rho$ is defined by \[\nabla^\rho_X\sigma:=\widehat{X}(\sigma)+\rho_*(\omega(\widehat{X}))\sigma,\] where $X\in TM$ and $\widehat{X}$ is some lift of $X$ to $T\mathscr{G}$.\end{definition}

Not surprisingly, tractor connections are valuable tools for working with Cartan geometries, in much the same way that representation theory is useful for studying Lie groups. As an example, for (torsion-free) Cartan geometries of type $(\Iso(m),\Orth(m))$, which correspond to Riemannian manifolds, the tractor bundle corresponding to the standard representation of $\Iso(m)\simeq\mathbb{R}^m\rtimes\Orth(m)$ on $\mathbb{R}^m$ is just the tangent bundle of the base manifold, and the tractor connection is the Levi-Civita connection.

For parabolic geometries, the topic of tractor connections is covered extensively in \cite{CapSlovak}, and their holonomy (defined in the usual way for vector bundle connections) has been thoroughly studied in several cases (see \cite{Armstrong}, \cite{Armstrong1}, and \cite{Armstrong2}). Indeed, when thinking of the holonomy group of a Cartan geometry in the extrinsic sense, as the holonomy group of a principal connection on an extended principal bundle, most of the results of this section are almost certainly considered well-known in some form or another.

We can think of either the tractor connection $\nabla^\rho$ itself or its space of parallel sections as the analogue of $\rho$ for $(\mathscr{G},\omega)$. Indeed, we can explicitly describe these parallel sections in a way precisely analogous to the copy of $\rho$ in $\Gamma(G\times_H V)$.

\begin{lemma}\label{paralleltransport} Suppose $(\mathscr{G},\omega)$ is a Cartan geometry of type $(G,H)$ over $M$ and $\nabla^\rho$ is the tractor connection on $\mathscr{G}\times_H V$ associated to a representation $\rho:G\to\GLin(V)$. Fix a point $(\mathscr{e},v)\in\mathscr{G}\times_H V$ and a path $\gamma:[0,1]\to M$ with $\gamma(0)=q_{{}_H}(\mathscr{e})$. For $\widehat{\gamma}:[0,1]\to\mathscr{G}$ a lift of $\gamma$ to $\mathscr{G}$ with $\widehat{\gamma}(0)=\mathscr{e}$, the parallel transport of $(\mathscr{e},v)$ along $\gamma$ with respect to $\nabla^\rho$ is given by $\|_\gamma(\mathscr{e},v):t\mapsto(\widehat{\gamma}(t),\rho(\widehat{\gamma}_{{}_G}(t))^{-1}v)$.\end{lemma}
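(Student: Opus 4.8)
The plan is to reduce the statement to a first-order linear ODE in the local frame furnished by the lift $\widehat{\gamma}$, verify that the proposed formula solves it, and then invoke uniqueness of parallel transport. Throughout, I identify a section of $\mathscr{G}\times_H V$ with the corresponding $H$-equivariant function $\mathscr{G}\to V$, and I regard $(\mathscr{e},v)$ as representing its class in $\mathscr{G}\times_H V$. Since developments of paths start at the identity, $\widehat{\gamma}_{{}_G}(0)=e$, so the proposed curve begins at $(\mathscr{e},\rho(e)^{-1}v)=(\mathscr{e},v)$, as required for a parallel transport of $(\mathscr{e},v)$.

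First I would extract the parallel transport equation from the definition of $\nabla^\rho$. Writing an arbitrary lift of $\gamma$ to the total space as $t\mapsto(\widehat{\gamma}(t),w(t))$ for a $V$-valued function $w$, I claim its covariant derivative along $\gamma$ is represented by $\dot{w}(t)+\rho_*(\omega(\dot{\widehat{\gamma}}(t)))w(t)$. To see this, choose any section $\sigma$, that is, any equivariant $f:\mathscr{G}\to V$ with $f(\widehat{\gamma}(t))=w(t)$ along $\widehat{\gamma}$. Since $\dot{\widehat{\gamma}}(t)$ is a lift of $\dot{\gamma}(t)$ to $T\mathscr{G}$, the definition of the tractor connection gives $\nabla^\rho_{\dot{\gamma}(t)}\sigma=\dot{\widehat{\gamma}}(t)(f)+\rho_*(\omega(\dot{\widehat{\gamma}}(t)))f=\dot{w}(t)+\rho_*(\omega(\dot{\widehat{\gamma}}(t)))w(t)$; because this expression only sees $\dot{\widehat{\gamma}}(t)$ and the values of $f$ along $\widehat{\gamma}$, it depends only on the curve, so it computes the covariant derivative of the curve itself. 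Hence the curve is parallel exactly when $\dot{w}+\rho_*(\omega(\dot{\widehat{\gamma}}))w=0$.

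It then remains to check that $w(t)=\rho(\widehat{\gamma}_{{}_G}(t))^{-1}v$ satisfies this equation. Setting $g(t)=\widehat{\gamma}_{{}_G}(t)$, the defining property $\gamma^*\omega=\gamma_{{}_G}^*\MC{G}$ of the development gives $\omega(\dot{\widehat{\gamma}}(t))=\MC{G}(\dot{g}(t))$. Because $\rho$ is a homomorphism, $\rho^*\MC{\GLin(V)}=\rho_*\circ\MC{G}$, and so differentiating $\rho(g(t))^{-1}$ in $\GLin(V)$ yields $\frac{d}{dt}\rho(g(t))^{-1}=-\rho_*(\MC{G}(\dot{g}(t)))\rho(g(t))^{-1}$. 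Applying this to $v$ gives precisely $\dot{w}(t)=-\rho_*(\omega(\dot{\widehat{\gamma}}(t)))w(t)$, so the parallel transport equation holds.

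The hard part will be the reduction in the second paragraph: the tractor connection is defined on sections, whereas the parallel transport is a curve that need not extend to a global section, so one must justify that an arbitrary section agreeing with the curve along $\widehat{\gamma}$ computes the correct covariant derivative. Once this is in place, the remaining computation is a routine application of the homomorphism property of $\rho$ together with the development identity, and uniqueness of solutions to the parallel transport ODE then identifies the solution with $\|_\gamma(\mathscr{e},v)$.
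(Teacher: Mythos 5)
Your proposal is correct and follows essentially the same route as the paper: the paper's proof is exactly the computation $\partial_t(\rho(\widehat{\gamma}_{{}_G})^{-1}v)+\rho_*(\omega(\dot{\widehat{\gamma}}))(\rho(\widehat{\gamma}_{{}_G})^{-1}v)=0$, with your careful second paragraph compressed into the phrase ``using the usual abuses of notation for computing parallel transport.'' The only difference is that you spell out the standard justification that the covariant derivative along a curve is well defined by the formula $\dot{w}+\rho_*(\omega(\dot{\widehat{\gamma}}))w$, which the paper takes for granted.
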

\begin{proof}Using the usual abuses of notation for computing parallel transport, \begin{align*}\nabla^\rho_{\dot{\gamma}}\|_\gamma(\mathscr{e},v) & =\partial_t(\rho(\widehat{\gamma}_{{}_G})^{-1}v)+\rho_*(\omega(\dot{\widehat{\gamma}}))(\rho(\widehat{\gamma}_{{}_G})^{-1}v) \\ & =-\rho_*(\omega(\dot{\widehat{\gamma}}))(\rho(\widehat{\gamma}_{{}_G})^{-1}v)+\rho_*(\omega(\dot{\widehat{\gamma}}))(\rho(\widehat{\gamma}_{{}_G})^{-1}v)=0.\quad\mbox{\qedhere}\end{align*}\end{proof}

\begin{corollary}A section $\sigma\in\Gamma(\mathscr{G}\times_H V)$ is parallel with respect to $\nabla^\rho$ if and only if the corresponding $H$-equivariant function $\sigma:\mathscr{G}\to V$ is of the form $\sigma:\mathscr{g}\mapsto\rho(\mathscr{g})^{-1}\sigma(\mathscr{e})$, where $\rho(\mathscr{g})$ is defined to be $\rho(g)$ for some development $g\in G$ of $\mathscr{g}$ from $\mathscr{e}$.\end{corollary}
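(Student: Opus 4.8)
The plan is to read off the corollary from Lemma \ref{paralleltransport} with essentially no new computation, using two standard facts: a section is parallel exactly when its restriction to every path is the parallel transport of its initial value, and every tangent vector of $M$ is realized as $\dot\gamma(0)$ for some path. The only real work is bookkeeping with the $H$-equivariance convention $\sigma(\mathscr{g}h)=\rho(h)^{-1}\sigma(\mathscr{g})$ and with how developments behave under the right $H$-action. As in Proposition \ref{basepoint}, I will work under the tacit assumption that any $\mathscr{g}\in\mathscr{G}$ can be joined to $\mathscr{e}$ by a path, so that $\rho(\mathscr{g})$ is available.

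For the forward implication, suppose $\sigma$ is parallel and fix $\mathscr{g}\in\mathscr{G}$. I would choose a path $\gamma$ in $M$ from $q_{{}_H}(\mathscr{e})$ to $q_{{}_H}(\mathscr{g})$, a lift $\widehat{\gamma}$ to $\mathscr{G}$ with $\widehat{\gamma}(0)=\mathscr{e}$, and $h\in H$ with $\widehat{\gamma}(1)h=\mathscr{g}$. Since $\sigma$ is parallel, its restriction to $\gamma$ is the parallel transport of $(\mathscr{e},\sigma(\mathscr{e}))$, so Lemma \ref{paralleltransport} gives $\sigma(\widehat{\gamma}(1))=\rho(\widehat{\gamma}_{{}_G}(1))^{-1}\sigma(\mathscr{e})$. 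Because $\widehat{\gamma}_{{}_G}(1)h$ is a development of $\mathscr{g}$ from $\mathscr{e}$ and $\sigma$ is $H$-equivariant, I then obtain $\sigma(\mathscr{g})=\rho(h)^{-1}\sigma(\widehat{\gamma}(1))=\rho(\widehat{\gamma}_{{}_G}(1)h)^{-1}\sigma(\mathscr{e})=\rho(\mathscr{g})^{-1}\sigma(\mathscr{e})$, which is the asserted form.

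For the converse, suppose $\sigma(\mathscr{g})=\rho(\mathscr{g})^{-1}\sigma(\mathscr{e})$ for all $\mathscr{g}$. Given any path $\widehat{\gamma}$ in $\mathscr{G}$ starting at $\mathscr{e}$, I observe that $\widehat{\gamma}_{{}_G}(t)$ is a development of $\widehat{\gamma}(t)$ from $\mathscr{e}$ (taking $h=e$), so along $\widehat{\gamma}$ the equivariant function reads $\sigma(\widehat{\gamma}(t))=\rho(\widehat{\gamma}_{{}_G}(t))^{-1}\sigma(\mathscr{e})$. This is precisely the second coordinate of the parallel transport appearing in Lemma \ref{paralleltransport}, so the identical computation there yields $\nabla^\rho_{\dot{\widehat{\gamma}}}\sigma=0$. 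As every $X\in TM$ is $\dot\gamma(0)$ for some such path, $\nabla^\rho_X\sigma=0$ for all $X$, and $\sigma$ is parallel.

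I do not expect a genuine obstacle; the subtlety worth flagging is the well-definedness of $\rho(\mathscr{g})$. Two developments of $\mathscr{g}$ from $\mathscr{e}$ differ by left multiplication by an element of $\Hol_\mathscr{e}(\mathscr{G},\omega)$, so $\rho(\mathscr{g})^{-1}\sigma(\mathscr{e})$ is unambiguous exactly when $\rho$ sends $\Hol_\mathscr{e}(\mathscr{G},\omega)$ to stabilizers of $\sigma(\mathscr{e})$; for a parallel $\sigma$ this is automatic, since the forward direction already exhibits the left-hand side $\sigma(\mathscr{g})$ as this value. The main content of the argument is therefore just aligning the development $\widehat{\gamma}_{{}_G}(1)h$ of $\mathscr{g}$ with the equivariance relation $\sigma(\widehat{\gamma}(1)h)=\rho(h)^{-1}\sigma(\widehat{\gamma}(1))$.
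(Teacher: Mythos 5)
Your argument is correct and is exactly the route the paper intends: the corollary is stated without proof immediately after Lemma \ref{paralleltransport}, the implicit justification being precisely your combination of ``parallel iff invariant under parallel transport along all paths'' with the explicit transport formula and the $H$-equivariance convention $\sigma(\mathscr{g}h)=\rho(h)^{-1}\sigma(\mathscr{g})$. Your closing remark on the well-definedness of $\rho(\mathscr{g})^{-1}\sigma(\mathscr{e})$ also matches the paper's subsequent discussion of $\Hol_\mathscr{e}(\mathscr{G},\omega)\leq\mathrm{Stab}_G(v)$.
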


Using Lemma \ref{paralleltransport}, we also get an explicit description of the holonomy of a tractor connection in terms of the holonomy of the associated Cartan geometry.

\begin{corollary}\label{holconversion} The isomorphism between the fiber of $\mathscr{G}\times_HV$ over $q_{{}_H}(\mathscr{e})$ and $V$ given by identifying $(\mathscr{e}h,\rho(h)^{-1}v)=(\mathscr{e},v)$ with $v$ induces an isomorphism \[\Hol_{q_{{}_H}(\mathscr{e})}(\mathscr{G}\times_HV,\nabla^\rho)\simeq\rho(\Hol_\mathscr{e}(\mathscr{G},\omega)).\]\end{corollary}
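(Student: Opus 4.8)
The plan is to read off the holonomy of $\nabla^\rho$ directly from the parallel transport formula of Lemma \ref{paralleltransport}, translating everything through the given identification of the fiber over $q_{{}_H}(\mathscr{e})$ with $V$. First I would fix a loop $\gamma:[0,1]\to M$ based at $q_{{}_H}(\mathscr{e})$ together with a lift $\widehat{\gamma}:[0,1]\to\mathscr{G}$ satisfying $\widehat{\gamma}(0)=\mathscr{e}$. Since $\widehat{\gamma}(1)$ lies in the fiber over $q_{{}_H}(\mathscr{e})$, there is an $h\in H$ with $\widehat{\gamma}(1)h=\mathscr{e}$, and then $g:=\widehat{\gamma}_{{}_G}(1)h$ is by definition the intrinsic holonomy of $\gamma$ with respect to $\mathscr{e}$, so $g\in\Hol_\mathscr{e}(\mathscr{G},\omega)$.

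By Lemma \ref{paralleltransport}, the parallel transport of $(\mathscr{e},v)$ along $\gamma$ terminates at $(\widehat{\gamma}(1),\rho(\widehat{\gamma}_{{}_G}(1))^{-1}v)$. The next step is to rewrite this endpoint using the identification $(\mathscr{e}h,\rho(h)^{-1}v)=(\mathscr{e},v)$. Writing $\widehat{\gamma}(1)=\mathscr{e}h^{-1}$ and applying $H$-equivariance gives $(\widehat{\gamma}(1),\rho(\widehat{\gamma}_{{}_G}(1))^{-1}v)=(\mathscr{e},\rho(h)^{-1}\rho(\widehat{\gamma}_{{}_G}(1))^{-1}v)$, which corresponds under the identification with $V$ to $\rho(\widehat{\gamma}_{{}_G}(1)h)^{-1}v=\rho(g)^{-1}v$. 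Hence the parallel transport automorphism around $\gamma$, viewed as an element of $\GLin(V)$, is exactly $\rho(g)^{-1}$.

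Finally, I would let $\gamma$ range over all loops based at $q_{{}_H}(\mathscr{e})$. By the definition of the intrinsic holonomy group, the associated elements $g$ range over all of $\Hol_\mathscr{e}(\mathscr{G},\omega)$, so the collection of holonomy automorphisms of $\nabla^\rho$ is precisely $\{\rho(g)^{-1}:g\in\Hol_\mathscr{e}(\mathscr{G},\omega)\}$. Since $\Hol_\mathscr{e}(\mathscr{G},\omega)$ is a group and $\rho$ is a homomorphism, passing to inverses merely permutes its elements, so this set equals $\rho(\Hol_\mathscr{e}(\mathscr{G},\omega))$. As both $\Hol_{q_{{}_H}(\mathscr{e})}(\mathscr{G}\times_HV,\nabla^\rho)$ and $\rho(\Hol_\mathscr{e}(\mathscr{G},\omega))$ are subgroups of $\GLin(V)$, this set-level equality is already an equality of subgroups, yielding the claimed isomorphism.

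The computation is short, so I do not expect a genuine obstacle; the only delicate point is the bookkeeping of the $H$-equivariance identification and the appearance of $\rho(g)^{-1}$ rather than $\rho(g)$. I would make sure the direction conventions for parallel transport and for concatenation of loops are consistent with the group structure on $\Hol_\mathscr{e}(\mathscr{G},\omega)$, so that the matching of the two subsets is genuinely an identification of the holonomy group of $\nabla^\rho$ with $\rho(\Hol_\mathscr{e}(\mathscr{G},\omega))$ as subgroups of $\GLin(V)$, and not merely of underlying sets.
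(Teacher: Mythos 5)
Your proposal is correct and follows essentially the same route the paper intends: the corollary is stated as an immediate consequence of Lemma \ref{paralleltransport}, obtained exactly as you do by evaluating the parallel transport formula at $t=1$, rewriting the endpoint through the identification $(\mathscr{e}h,\rho(h)^{-1}v)=(\mathscr{e},v)$, and absorbing the resulting inverse using that $\Hol_\mathscr{e}(\mathscr{G},\omega)$ is a group (the same trick the paper uses when matching intrinsic and extrinsic holonomy). Your closing remarks on the inverse and on the set-versus-subgroup distinction are precisely the right points of care, and the bookkeeping checks out.
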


It was observed in \cite{CGH} that parallel sections of tractor bundles determine holonomy reductions. In our current formulation, this just amounts to the fact that, for $\widetilde{v}:\mathscr{g}\mapsto\rho(\mathscr{g})^{-1}v$ to be well-defined, we only need $\widetilde{v}_\mathscr{g}=\rho(g)^{-1}v=\rho(jg)^{-1}v=\rho(g)^{-1}\rho(j)^{-1}v$ for every $j\in\Hol_\mathscr{e}(\mathscr{G},\omega)$ and $g$ a development of $\mathscr{g}$ from $\mathscr{e}$, so $\widetilde{v}$ is well-defined if and only if $\Hol_\mathscr{e}(\mathscr{G},\omega)$ is contained in the stabilizer $\mathrm{Stab}_G(v)$. In particular, if $\Hol_\mathscr{e}(\mathscr{G},\omega)\leq\ker(\rho)$, then we can recover the full copy of $V$ from the beginning of this section.


\section{Interactions with automorphisms}\label{automorphisms}
More than being just a principal $H$-bundle on which a Cartan connection of type $(G,H)$ is defined, in the model case the Lie group $G$ is also the group of symmetries for the geometric structure, acting on itself and the underlying homogeneous space by left-translation. For general Cartan geometries, geometric automorphisms act as analogues of these left-translations. In this section, we will briefly recall some elementary properties of automorphisms of Cartan geometries and describe how they interact with curved cosets and holonomy reductions.

\begin{definition}Given two Cartan geometries $(\mathscr{G},\omega)$ and $(\mathscr{G}',\omega')$ of type $(G,H)$, an $H$-equivariant map $\varphi:\mathscr{G}\to\mathscr{G}'$ is a \emph{geometric map} if and only if $\varphi^*\omega'=\omega$. A \emph{geometric isomorphism} is a bijective geometric map, and a \emph{(geometric) automorphism} is a geometric isomorphism from a Cartan geometry to itself.\end{definition}

We will denote by $\Aut(\mathscr{G},\omega)$ the group of automorphisms of $(\mathscr{G},\omega)$. Similar to how $a(gb)=(ag)b$ for $a,b,g\in G$, for $\varphi\in\Aut(\mathscr{G},\omega)$, $h\in H$, $X\in\mathfrak{g}$, and $\mathscr{g}\in\mathscr{G}$ we get $\varphi(\mathscr{g}h)=\varphi(\mathscr{g})h$ by $H$-equivariance and $\varphi(\exp(t\omega^{-1}(X))\mathscr{g})=\exp(t\omega^{-1}(X))\varphi(\mathscr{g})$ because $\varphi^*\omega=\omega$. For a Cartan geometry over a connected base manifold, if we fix a point $\mathscr{e}\in\mathscr{G}$, then every element of $\mathscr{G}$ is of the form $(\exp(\omega^{-1}(X_k))\cdots\exp(\omega^{-1}(X_1))\mathscr{e})h$ for some $X_1,\dots,X_k\in\mathfrak{g}$ and $h\in H$, so automorphisms $\varphi\in\Aut(\mathscr{G},\omega)$ are uniquely determined by the image $\varphi(\mathscr{e})$ of $\mathscr{e}$. In particular, $\Aut(\mathscr{G},\omega)$ acts freely on $\mathscr{G}$. We can further use this analogue of associativity to see that the orbits of $\Aut(\mathscr{G},\omega)$ are embedded closed submanifolds, so $\Aut(\mathscr{G},\omega)$ inherits a natural Lie group structure from its orbits on $\mathscr{G}$.

For $\varphi\in\Aut(\mathscr{G},\omega)$ and $\gamma:[0,1]\to\mathscr{G}$ a path in $\mathscr{G}$, $(\varphi\circ\gamma)^*\omega=\gamma^*\varphi^*\omega=\gamma^*\omega$, so $\gamma_{{}_G}=(\varphi\circ\gamma)_{{}_G}$. Therefore, if $\mathscr{e},\mathscr{g}\in\mathscr{G}$, $g\in G$ is a development of $\mathscr{g}$ from $\mathscr{e}$, and $a\in G$ is a development of $\varphi(\mathscr{e})$ from $\mathscr{e}$, then $ag$ is a development of $\varphi(\mathscr{g})$ from $\mathscr{e}$. Heuristically, this means that automorphisms look like particular left-translations to the developments.

Using the holonomy group, we can even place fairly strong restrictions on what left-translations an automorphism can look like in this sense.

\begin{proposition}\label{holaut} Suppose $(\mathscr{G},\omega)$ is a Cartan geometry of type $(G,H)$ and $\mathscr{e}\in\mathscr{G}$. Let $\varphi\in\Aut(\mathscr{G},\omega)$. Then, $\varphi(\mathscr{e})\in\mathscr{e}\bullet\mathrm{N}_G(\Hol_\mathscr{e}(\mathscr{G},\omega))$, where $\mathrm{N}_G(J)$ denotes the normalizer of a subgroup $J\leq G$. In other words, if $\varphi(\mathscr{e})$ develops to $a\in G$ from $\mathscr{e}$, then $a\in\mathrm{N}_G(\Hol_\mathscr{e}(\mathscr{G},\omega))$.\end{proposition}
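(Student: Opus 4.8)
The plan is to exploit two facts about how the holonomy group behaves: first, that it is invariant under automorphisms, and second, that changing basepoint conjugates it by a development. Combining these will immediately force the development $a$ of $\varphi(\mathscr{e})$ to normalize $\Hol_\mathscr{e}(\mathscr{G},\omega)$.

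First I would establish that $\Hol_{\varphi(\mathscr{e})}(\mathscr{G},\omega)=\Hol_\mathscr{e}(\mathscr{G},\omega)$. The key is that composing loops with $\varphi$ sets up a bijection between loops based (up to $H$) at $\mathscr{e}$ and those based at $\varphi(\mathscr{e})$ that preserves developments. Concretely, if $\gamma$ is a lift of a loop in $M$ with $\gamma(0)=\varphi(\mathscr{e})$ and $\gamma(1)h=\varphi(\mathscr{e})$ for some $h\in H$, then $\varphi^{-1}\circ\gamma$ starts at $\mathscr{e}$ and, using the $H$-equivariance of $\varphi^{-1}$, satisfies $(\varphi^{-1}\circ\gamma)(1)h=\varphi^{-1}(\gamma(1)h)=\varphi^{-1}(\varphi(\mathscr{e}))=\mathscr{e}$. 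Since $\varphi^{-1}$ is again an automorphism, the observation that $(\varphi^{-1}\circ\gamma)^*\omega=\gamma^*\omega$ gives $(\varphi^{-1}\circ\gamma)_{{}_G}=\gamma_{{}_G}$, so the element $\gamma_{{}_G}(1)h$ lies in $\Hol_\mathscr{e}(\mathscr{G},\omega)$. This shows $\Hol_{\varphi(\mathscr{e})}(\mathscr{G},\omega)\subseteq\Hol_\mathscr{e}(\mathscr{G},\omega)$, and running the identical argument with $\varphi$ in place of $\varphi^{-1}$ gives the reverse inclusion, hence equality.

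Next I would invoke the conjugation formula for holonomy at different basepoints recorded earlier in Section \ref{curvedcosets}: for a path starting at $\mathscr{e}$ witnessing that $a\in G$ is a development of $\varphi(\mathscr{e})$ from $\mathscr{e}$, one has $\Hol_{\varphi(\mathscr{e})}(\mathscr{G},\omega)=a^{-1}\Hol_\mathscr{e}(\mathscr{G},\omega)a$. Combining this with the equality from the previous step yields $a^{-1}\Hol_\mathscr{e}(\mathscr{G},\omega)a=\Hol_\mathscr{e}(\mathscr{G},\omega)$, which is precisely the assertion that $a\in\mathrm{N}_G(\Hol_\mathscr{e}(\mathscr{G},\omega))$, and therefore $\varphi(\mathscr{e})\in\mathscr{e}\bullet\mathrm{N}_G(\Hol_\mathscr{e}(\mathscr{G},\omega))$.

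The main obstacle I anticipate is the bookkeeping in the first step: one must verify carefully that $\varphi^{-1}$ sends a loop closed by $h\in H$ at $\varphi(\mathscr{e})$ to a loop closed by the \emph{same} $h$ at $\mathscr{e}$, which requires combining the $H$-equivariance of $\varphi^{-1}$ with the development-preserving property $\varphi^*\omega=\omega$, so that the group element $\gamma_{{}_G}(1)h$ attached to the loop is genuinely unchanged. Once the automorphism-invariance of the holonomy group is secured, the conclusion is a purely formal consequence of the two displayed identities.
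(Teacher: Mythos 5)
Your proof is correct and follows essentially the same route as the paper: first showing $\Hol_{\varphi(\mathscr{e})}(\mathscr{G},\omega)=\Hol_\mathscr{e}(\mathscr{G},\omega)$ by transporting loops through $\varphi$ (using $H$-equivariance and $\varphi^*\omega=\omega$ to preserve both the closing element $h$ and the development), then combining this with the basepoint-change conjugation formula $\Hol_{\varphi(\mathscr{e})}(\mathscr{G},\omega)=a^{-1}\Hol_\mathscr{e}(\mathscr{G},\omega)a$. The extra bookkeeping you flag in the first step is exactly what the paper's two-line set equality silently relies on, so your version is just a more explicit rendering of the same argument.
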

\begin{proof}If $\varphi\in\Aut(\mathscr{G},\omega)$, then \begin{align*}\Hol_{\varphi(\mathscr{e})}(\mathscr{G},\omega) & =\left\{\alpha_{{}_G}(1)h\in G: \begin{matrix}h\in H\text{ and }\alpha:[0,1]\to\mathscr{G}\\ \text{ such that }\alpha(0)=\varphi(\mathscr{e})=\alpha(1)h\end{matrix}\right\} \\ & =\left\{(\varphi(\beta))_{{}_G}(1)h\in G: \begin{matrix}h\in H\text{ and }\beta:[0,1]\to\mathscr{G}\\ \text{ such that }\beta(0)=\mathscr{e}=\beta(1)h\end{matrix}\right\} \\ & =\Hol_\mathscr{e}(\mathscr{G},\omega).\end{align*} Thus, if $\varphi(\mathscr{e})$ develops to $a$ from $\mathscr{e}$, then \[\Hol_\mathscr{e}(\mathscr{G},\omega)=\Hol_{\varphi(\mathscr{e})}(\mathscr{G},\omega)=a^{-1}\Hol_\mathscr{e}(\mathscr{G},\omega)a.~~\mbox{\qedhere}\]\end{proof}

In particular, if a Cartan geometry has a large automorphism group, then the normalizer of the holonomy group must also be large. Further, if a Cartan geometry of type $(G,H)$ is homogeneous, meaning that the automorphism group acts transitively on the base manifold, then the normalizer of the holonomy group will have an open orbit in $G/H$.

As we might expect, our curved cosets behave well under automorphisms: for $\varphi\in\Aut(\mathscr{G},\omega)$, $\mathscr{e}\in\mathscr{G}$, and $J\leq G$ a subgroup containing $\Hol_\mathscr{e}(\mathscr{G},\omega)$, \begin{align*}\varphi(\mathscr{e}\bullet J) & =\left\{\varphi(\gamma(1)h):~\begin{matrix}h\in H\text{ and }\gamma:[0,1]\to\mathscr{G}\text{ such} \\ \text{that }\gamma(0)=\mathscr{e}\text{ and }\gamma_{{}_G}(1)h\in J\end{matrix}\right\} \\ & =\left\{\beta(1)h:~\begin{matrix}h\in H\text{ and }\beta:[0,1]\to\mathscr{G}\text{ such} \\ \text{that }\beta(0)=\varphi(\mathscr{e})\text{ and }\beta_{{}_G}(1)h\in J\end{matrix}\right\} \\ & = \varphi(\mathscr{e})\bullet J,\end{align*} in the same way that $a(eJ)=aJ$. However, analogous to how $a(Jg)$ is not necessarily equal to $J(ag)$ unless $a\in \mathrm{N}_G(J)$, $\varphi(J\bullet\mathscr{g})$ is not necessarily equal to $J\bullet\varphi(\mathscr{g})$ unless an analogous condition is met.

\begin{theorem}\label{normalizer} Suppose $(\mathscr{G},\omega)$ is a Cartan geometry of type $(G,H)$, $\mathscr{e}\in\mathscr{G}$, and $J\leq G$ is a subgroup containing $\Hol_\mathscr{e}(\mathscr{G},\omega)$. For $\varphi\in\Aut(\mathscr{G},\omega)$, if $\varphi(\mathscr{e})\in\mathscr{e}\bullet \mathrm{N}_G(J)$, then $\varphi(J\bullet\mathscr{g})=J\bullet\varphi(\mathscr{g})$ for all $\mathscr{g}\in\mathscr{G}$.\end{theorem}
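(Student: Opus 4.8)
The plan is to reduce everything to the defining identity $J\bullet\mathscr{g}=\mathscr{g}\bullet(g^{-1}Jg)$ together with the two facts already established: that automorphisms act as left-translations on developments (if $g$ develops $\mathscr{g}$ from $\mathscr{e}$ and $a$ develops $\varphi(\mathscr{e})$ from $\mathscr{e}$, then $ag$ develops $\varphi(\mathscr{g})$ from $\mathscr{e}$), and the left-coset equivariance $\varphi(\mathscr{g}\bullet K)=\varphi(\mathscr{g})\bullet K$ for any subgroup $K\geq\Hol_\mathscr{g}(\mathscr{G},\omega)$, which was derived for a general basepoint just above the statement.

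First I would unpack the hypothesis. The condition $\varphi(\mathscr{e})\in\mathscr{e}\bullet\mathrm{N}_G(J)$ says precisely that some (hence every, since $\Hol_\mathscr{e}(\mathscr{G},\omega)\leq J\leq\mathrm{N}_G(J)$) development $a\in G$ of $\varphi(\mathscr{e})$ from $\mathscr{e}$ lies in the normalizer, so that $a^{-1}Ja=J$. Then, for an arbitrary $\mathscr{g}\in\mathscr{G}$ with development $g$ from $\mathscr{e}$, the development law gives that $ag$ is a development of $\varphi(\mathscr{g})$ from $\mathscr{e}$.

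The computation then runs as follows. On one side, applying the left-coset equivariance at the basepoint $\mathscr{g}$ to the subgroup $K=g^{-1}Jg$ (which is well-defined because $\Hol_\mathscr{g}(\mathscr{G},\omega)=g^{-1}\Hol_\mathscr{e}(\mathscr{G},\omega)g\leq g^{-1}Jg$), we obtain $\varphi(J\bullet\mathscr{g})=\varphi(\mathscr{g}\bullet(g^{-1}Jg))=\varphi(\mathscr{g})\bullet(g^{-1}Jg)$. On the other side, using the development $ag$ of $\varphi(\mathscr{g})$ in the definition of the curved right coset, $J\bullet\varphi(\mathscr{g})=\varphi(\mathscr{g})\bullet((ag)^{-1}J(ag))=\varphi(\mathscr{g})\bullet(g^{-1}(a^{-1}Ja)g)$. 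Since $a\in\mathrm{N}_G(J)$, we have $a^{-1}Ja=J$, so the two right-hand sides coincide, yielding $\varphi(J\bullet\mathscr{g})=J\bullet\varphi(\mathscr{g})$.

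I expect the only real care to be in the bookkeeping of well-definedness, namely confirming at each step that the subgroups appearing contain the relevant holonomy groups so that the curved cosets are defined, together with the verification that the conjugated subgroup $(ag)^{-1}J(ag)$ simplifies to $g^{-1}Jg$ exactly when $a$ normalizes $J$. An equivalent, more structural route is to identify $J\bullet\mathscr{g}$ with the fiber $\sigma^{-1}(g^{-1}J)$ of the holonomy reduction $\sigma$ from Proposition \ref{basepoint}, observe that the development law yields $\sigma\circ\varphi=R_{a^{-1}}\circ\sigma$, where $R_{a^{-1}}$ denotes the (well-defined) right action of $a^{-1}\in\mathrm{N}_G(J)$ on $G/J$, and then use the bijectivity of both $\varphi$ and $R_{a^{-1}}$ to conclude $\varphi(\sigma^{-1}(g^{-1}J))=\sigma^{-1}(g^{-1}a^{-1}J)=\sigma^{-1}((ag)^{-1}J)=J\bullet\varphi(\mathscr{g})$.
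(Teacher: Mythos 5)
Your proposal is correct and follows essentially the same route as the paper: both arguments unwind $J\bullet\varphi(\mathscr{g})$ via the development $ag$ of $\varphi(\mathscr{g})$ to get $\varphi(\mathscr{g})\bullet(g^{-1}(a^{-1}Ja)g)$, apply the left-coset equivariance of $\varphi$, and conclude from $a^{-1}Ja=J$. Your bookkeeping of well-definedness (that $a^{-1}Ja\supseteq\Hol_\mathscr{e}(\mathscr{G},\omega)$, which the paper gets from Proposition \ref{holaut}) matches the paper's remark as well.
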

\begin{proof}Suppose $a$ is a development of $\varphi(\mathscr{e})$ from $\mathscr{e}$ and $g$ is a development of $\mathscr{g}$ from $\mathscr{e}$. Then, $\varphi(\mathscr{g})$ will have development $ag$, so \[J\bullet\varphi(\mathscr{g})=\varphi(\mathscr{g})\bullet((ag)^{-1}J(ag))=\varphi(\mathscr{g})\bullet(g^{-1}(a^{-1}Ja)g)=\varphi((a^{-1}Ja)\bullet\mathscr{g}).\] Thus, if $a^{-1}Ja=J$, then $J\bullet\varphi(\mathscr{g})=\varphi(J\bullet\mathscr{g})$. Note that $a^{-1}Ja$ necessarily contains $\Hol_\mathscr{e}(\mathscr{G},\omega)$ by Proposition \ref{holaut}, so $a^{-1}Ja\bullet\mathscr{g}$ is well-defined.\mbox{\qedhere}\end{proof}

By Proposition \ref{holaut}, the above theorem implies that \[\varphi(\Hol_\mathscr{e}(\mathscr{G},\omega)\bullet\mathscr{g})=\Hol_\mathscr{e}(\mathscr{G},\omega)\bullet\varphi(\mathscr{g}).\]

Since $a^{-1}Ja$ and $J$ are not disjoint, it is possible for $a^{-1}Ja\bullet\mathscr{g}=J\bullet\mathscr{g}$, and hence $\varphi(J\bullet\mathscr{g})=J\bullet\varphi(\mathscr{g})$, without $\varphi(\mathscr{e})\in\mathscr{e}\bullet\mathrm{N}_G(J)$. For example, this occurs for the Cartan geometry of type $(\mathbb{R}^2\rtimes\Orth(2),\Orth(2))$ inherited by an open unit disk around the origin in the Euclidean plane: for $J$ the subgroup generated by a translation that moves the origin outside of the unit disk, $\varphi$ a rotation around the origin by an angle that is not an integer multiple of $\pi$, and $\mathscr{e}$ corresponding to the image of the identity element of $\mathbb{R}^2\rtimes\Orth(2)$ under the restriction to the disk, we get that $\varphi(\mathscr{e})=\mathscr{e}A$, where $A$ is the element of $\Orth(2)$ corresponding to the chosen rotation around the origin, but $A$ does not normalize $J$. However, we still get $\varphi(J\bullet\mathscr{e})=J\bullet\varphi(\mathscr{e})=\{\varphi(\mathscr{e})\}$, essentially because the geometry does not ``see'' the translations in $J$ from $\mathscr{e}$.

In contrast, if $aJg\neq Jg$, then $aJg$ and $Jg$ are disjoint, so the corresponding condition for curved orbits is necessary and sufficient.

\begin{theorem}Suppose $(\mathscr{G},\omega)$ is a Cartan geometry of type $(G,H)$, $\mathscr{e}\in\mathscr{G}$, and $J\leq G$ is a subgroup containing $\Hol_\mathscr{e}(\mathscr{G},\omega)$. For $\varphi\in\Aut(\mathscr{G},\omega)$ and $\mathscr{g}\in\mathscr{G}$, $\varphi(J\bullet\mathscr{g})=J\bullet\mathscr{g}$ if and only if $\varphi(\mathscr{e})\in\mathscr{e}\bullet J$.\end{theorem}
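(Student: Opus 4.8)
The plan is to reduce the statement to an elementary fact inside $G$: if $g\in G$ is a development of $\mathscr{g}$ from $\mathscr{e}$ and $a\in G$ is a development of $\varphi(\mathscr{e})$ from $\mathscr{e}$, then the subsets $Jg$ and $aJg$ of $G$ are either equal or disjoint, and they coincide exactly when $a\in J$. Indeed, if $aj_1g=j_2g$ for some $j_1,j_2\in J$ then $a=j_2j_1^{-1}\in J$, so a single common element already forces $a\in J$ and hence $aJg=Jg$; the reverse implication is trivial. This is precisely the dichotomy flagged in the paragraph preceding the statement, in contrast with the conjugate subgroups $a^{-1}Ja$ and $J$ of Theorem \ref{normalizer}, which always meet at the identity and therefore give only a sufficient condition.

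First I would record a development description of both sides. Recall from the discussion of curved right cosets that $J\bullet\mathscr{g}=\sigma^{-1}(g^{-1}J)$ for the holonomy reduction $\sigma:\mathscr{G}\to G/J$ determined by $\mathscr{e}$ and $J$ as in Proposition \ref{basepoint}; unwinding $\sigma(\mathscr{g}')=(g')^{-1}J$ (where $g'$ is a development of $\mathscr{g}'$ from $\mathscr{e}$) shows that $\mathscr{g}'\in J\bullet\mathscr{g}$ if and only if $g'\in Jg$. Since $\varphi$ sends a point developing to $g'$ to a point developing to $ag'$, the same bookkeeping gives $\varphi(\mathscr{g}')\in J\bullet\mathscr{g}$ iff $ag'\in Jg$, so that $\varphi(J\bullet\mathscr{g})$ is exactly the set of points of $\mathscr{G}$ whose developments from $\mathscr{e}$ lie in $aJg$. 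Writing $D(S)$ for ``the set of points of $\mathscr{G}$ developing from $\mathscr{e}$ into $S$'', this reads $J\bullet\mathscr{g}=D(Jg)$ and $\varphi(J\bullet\mathscr{g})=D(aJg)$.

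The compatibility to check is that both $Jg$ and $aJg$ are invariant under left multiplication by $\Hol_\mathscr{e}(\mathscr{G},\omega)$. For $Jg$ this is immediate from $\Hol_\mathscr{e}(\mathscr{G},\omega)\leq J$, and for $aJg$ it uses $a\in\mathrm{N}_G(\Hol_\mathscr{e}(\mathscr{G},\omega))$ from Proposition \ref{holaut}, giving $\Hol_\mathscr{e}(\mathscr{G},\omega)\,aJg=a\,\Hol_\mathscr{e}(\mathscr{G},\omega)\,Jg=aJg$. Because the full set of developments of a fixed point $\mathscr{g}'$ from $\mathscr{e}$ is a single coset $\Hol_\mathscr{e}(\mathscr{G},\omega)g'$, this invariance upgrades ``$\mathscr{g}'\in D(S)$'' to ``$\Hol_\mathscr{e}(\mathscr{G},\omega)g'\subseteq S$''. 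Consequently, if $Jg\cap aJg=\emptyset$ then $D(Jg)\cap D(aJg)=\emptyset$, since any common point would have its development coset contained in the empty intersection.

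Assembling these gives both directions. The implication $\Leftarrow$ is immediate: $\varphi(\mathscr{e})\in\mathscr{e}\bullet J$ means $a\in J$, whence $aJg=Jg$ and $\varphi(J\bullet\mathscr{g})=J\bullet\mathscr{g}$. For $\Rightarrow$, both $D(Jg)$ and $D(aJg)$ are nonempty (the former contains $\mathscr{g}$, and the latter is the image of a nonempty set under the bijection $\varphi$), so $\varphi(J\bullet\mathscr{g})=J\bullet\mathscr{g}$ forces $D(Jg)\cap D(aJg)\neq\emptyset$, hence $Jg\cap aJg\neq\emptyset$ by the previous paragraph, hence $aJg=Jg$, hence $a\in J$, i.e.\ $\varphi(\mathscr{e})\in\mathscr{e}\bullet J$. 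I expect the only genuinely delicate point to be this passage from disjointness of $Jg$ and $aJg$ in $G$ to disjointness of the preimages in $\mathscr{G}$: this is where the left-$\Hol_\mathscr{e}(\mathscr{G},\omega)$-invariance of both subsets (and thus Proposition \ref{holaut}) is indispensable, and where the non-uniqueness of developments must be handled with care. Throughout I am tacitly using, as in the definition of curved right cosets, that the relevant developments from $\mathscr{e}$ exist.
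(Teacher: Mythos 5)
Your proposal is correct and is essentially the paper's own argument: both directions reduce to comparing the two developments $ajg$ and $j'g$ of a common point, which differ by an element of $\Hol_\mathscr{e}(\mathscr{G},\omega)\leq J$, forcing $a\in J$. The only cosmetic difference is your packaging via the sets $D(Jg)$ and $D(aJg)$ and their $\Hol_\mathscr{e}(\mathscr{G},\omega)$-invariance (where you invoke Proposition \ref{holaut}); the paper reaches the same conclusion directly without needing the normalizer property.
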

\begin{proof}As we saw in the proof of Theorem \ref{normalizer}, with $a\in G$ a development of $\varphi(\mathscr{e})$ and $g\in G$ a development of $\mathscr{g}$, $\varphi(J\bullet\mathscr{g})=aJa^{-1}\bullet\varphi(\mathscr{g})$, so if $a\in J$, then $\varphi(J\bullet\mathscr{g})=J\bullet\mathscr{g}$. Conversely, if $\varphi(J\bullet\mathscr{g})=J\bullet\mathscr{g}$, then every element $\mathscr{g}'\in\varphi(J\bullet\mathscr{g})=J\bullet\mathscr{g}$ has developments of the form $ajg$ and $j'g$ for some $j,j'\in J$, so $ajg(j'g)^{-1}=aj(j')^{-1}\in\Hol_\mathscr{e}(\mathscr{G},\omega)$, hence $a\in J$.\mbox{\qedhere}\end{proof}

One situation where this idea applies particularly well is when the base manifold itself is a single curved orbit of $J$.

\begin{corollary}\label{essential} Suppose $q_{{}_H}(J\bullet\mathscr{e})=q_{{}_H}(\mathscr{G})$. Then, for $\mathscr{g}\in\mathscr{G}$ with development $g\in G$, the $H$-equivariant extensions of the automorphisms of the induced Cartan geometry of type $(g^{-1}Jg,g^{-1}Jg\cap H)$ on $J\bullet\mathscr{g}$ are automorphisms of $(\mathscr{G},\omega)$, and an automorphism $\varphi$ of $(\mathscr{G},\omega)$ restricts to an automorphism of $J\bullet\mathscr{g}$ if and only if $\varphi(\mathscr{e})\in\mathscr{e}\bullet J$.\end{corollary}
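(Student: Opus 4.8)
The plan is to treat the two assertions separately, putting most of the work into constructing the $H$-equivariant extension. The key preliminary observation is that the hypothesis $q_{{}_H}(J\bullet\mathscr{e})=q_{{}_H}(\mathscr{G})$ forces the base to be a single curved orbit, and this propagates to every fiber. Concretely, writing $\sigma:\mathscr{G}\to G/J$ for the induced holonomy reduction, we have $J\bullet\mathscr{e}=\sigma^{-1}(eJ)$, so surjectivity of $q_{{}_H}$ on $J\bullet\mathscr{e}$ says that the $H$-type of every point is the single orbit $HJ/J$; comparing this with $\sigma(\mathscr{g})=g^{-1}J$ gives $g\in JH$, whence $q_{{}_H}(J\bullet\mathscr{g})=q_{{}_H}(\sigma^{-1}(Hg^{-1}J))=q_{{}_H}(\sigma^{-1}(HJ))=q_{{}_H}(\mathscr{G})$ as well. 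First I would record this, so that $J\bullet\mathscr{g}$ is an embedded closed reduction of the principal $H$-bundle $\mathscr{G}\to M$ to the structure group $H\cap g^{-1}Jg$ that meets every fiber, and that at each of its points $\mathfrak{g}=\mathfrak{h}+\mathfrak{j}'$, where $\mathfrak{j}':=\mathrm{Lie}(g^{-1}Jg)$. This splitting is precisely what makes the extension possible.

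For the first assertion, given an automorphism $\psi$ of the induced Cartan geometry on $J\bullet\mathscr{g}$, I would define its extension by $\varphi(\mathscr{a}h):=\psi(\mathscr{a})h$ for $\mathscr{a}\in J\bullet\mathscr{g}$ and $h\in H$, which makes sense because the reduction meets every fiber. Well-definedness follows from $(H\cap g^{-1}Jg)$-equivariance of $\psi$ (two representations $\mathscr{a}h=\mathscr{a}'h'$ differ by an element of $H\cap g^{-1}Jg$), $H$-equivariance of $\varphi$ is immediate, and smoothness follows by composing with local smooth sections of the reduction. The substantive point is $\varphi^*\omega=\omega$. At a point $\mathscr{a}\in J\bullet\mathscr{g}$ I would check this separately on the two pieces of $\mathfrak{g}=\mathfrak{h}+\mathfrak{j}'$: on $\omega^{-1}(\mathfrak{j}')=T_\mathscr{a}(J\bullet\mathscr{g})$ it holds because $\varphi=\psi$ there and $\psi$ preserves the restricted connection, while on $\omega^{-1}(\mathfrak{h})$ it holds because $H$-equivariance gives $\varphi_*\omega^{-1}(Y)=\omega^{-1}(Y)$ for $Y\in\mathfrak{h}$, so $\omega(\varphi_*\omega^{-1}(Y))=Y$. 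Finally I would propagate $\varphi^*\omega=\omega$ from $J\bullet\mathscr{g}$ to all of $\mathscr{G}$ using $\varphi\circ\Rt{h}=\Rt{h}\circ\varphi$ together with $\Rt{h}^*\omega=\Ad_{h^{-1}}\omega$, and establish bijectivity by extending $\psi^{-1}$ in the same manner, yielding $\varphi\in\Aut(\mathscr{G},\omega)$.

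The second assertion I would deduce almost entirely from the theorem immediately preceding this corollary, which states that $\varphi(J\bullet\mathscr{g})=J\bullet\mathscr{g}$ precisely when $\varphi(\mathscr{e})\in\mathscr{e}\bullet J$. If $\varphi(\mathscr{e})\in\mathscr{e}\bullet J$, then $\varphi$ preserves $J\bullet\mathscr{g}$ setwise, and since $\varphi^*\omega=\omega$ globally and $\varphi$ is $H$-equivariant, its restriction is $(H\cap g^{-1}Jg)$-equivariant, bijective, and preserves $\omega|_{J\bullet\mathscr{g}}$, hence is an automorphism of the induced geometry. Conversely, any $\varphi$ restricting to an automorphism of $J\bullet\mathscr{g}$ in particular maps $J\bullet\mathscr{g}$ onto itself, which by the same theorem forces $\varphi(\mathscr{e})\in\mathscr{e}\bullet J$.

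I expect the main obstacle to be the verification that $\varphi$ preserves $\omega$ on all of $\mathscr{G}$, and in particular making sure every tangent direction at a point of the reduction is accounted for. This is exactly where the hypothesis is used: without $q_{{}_H}(J\bullet\mathscr{g})=q_{{}_H}(\mathscr{G})$ one would not have $\mathfrak{g}=\mathfrak{h}+\mathfrak{j}'$, the directions transverse to $J\bullet\mathscr{g}$ would genuinely leave the reduction, and there would be no way to determine $\varphi$ on them by equivariance alone. A secondary point requiring care is the smoothness of the extension, which I would handle by working in local trivializations adapted to the reduction $J\bullet\mathscr{g}$.
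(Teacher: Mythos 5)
Your proposal is correct and takes essentially the route the paper intends: the corollary is stated there without proof as a consequence of the immediately preceding theorem together with the extension-functor picture, and your argument supplies exactly the missing details. In particular, you correctly isolate where the hypothesis $q_{{}_H}(J\bullet\mathscr{e})=q_{{}_H}(\mathscr{G})$ enters (forcing $\mathfrak{g}=\mathfrak{h}+\mathrm{Ad}_{g^{-1}}\mathfrak{j}$ so that the $H$-equivariant extension is determined and $\varphi^*\omega=\omega$ can be checked on the two spanning subspaces), and you deduce the second assertion from the preceding theorem exactly as the paper does.
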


As we will see below, Corollary \ref{essential} could be remarkably useful, in some circumstances, for studying essential automorphisms of parabolic geometries. However, it is already quite useful on its own. It tells us, for example, that if $Q\subseteq\mathrm{Fr}(M)$ is a first-order $H$-structure, where $\mathrm{Fr}(M)$ is the frame bundle of $M$, and $\nabla$ is an affine connection compatible with $Q$, then an automorphism $\varphi:M\to M$ of the affine manifold $(M,\nabla)$ restricts to an automorphism of the first-order $H$-structure if and only if, for \textit{some} frame $u\in Q$, the derivative map $\varphi_{*q_{{}_H}(u)}$ satisfies $\varphi_{*q_{{}_H}(u)}\circ u\in Q$. In other words, $\varphi$ preserves \textit{all} of $Q$ if and only if it preserves a \textit{single} frame of $Q$.

We can also consider \emph{infinitesimal automorphisms} of $(\mathscr{G},\omega)$, which are vector fields $\xi$ on $\mathscr{G}$ whose flows are automorphisms of $(\mathscr{G},\omega)$. Equivalently, $\xi$ is an infinitesimal automorphism if and only if it is a complete vector field such that $[\xi,\omega^{-1}(X)]=0$ for all $X\in\mathfrak{g}$ and $\Rt{h*}\xi=\xi$ for all $h\in H$. We will denote the Lie algebra of infinitesimal automorphisms of $(\mathscr{G},\omega)$ under the negative of the Lie bracket by $\mathfrak{aut}(\mathscr{G},\omega)$, since this is precisely the Lie algebra of $\Aut(\mathscr{G},\omega)$.

Consider the subalgebra $\mathfrak{z}_\mathfrak{g}(\Hol_\mathscr{e}(\mathscr{G},\omega))\leq\mathfrak{g}$ of all $\Ad_{\Hol_\mathscr{e}(\mathscr{G},\omega)}$-invariant elements of $\mathfrak{g}$. For $Z\in\mathfrak{z}_\mathfrak{g}(\Hol_\mathscr{e}(\mathscr{G},\omega))$, we can consider the vector field $\widetilde{Z}:\mathscr{g}\mapsto\omega_\mathscr{g}^{-1}(\Ad_{\mathscr{g}^{-1}}Z)$, where $\Ad_{\mathscr{g}^{-1}}$ is, again, shorthand for $\Ad_{g^{-1}}$ for some development $g$ of $\mathscr{g}$. By the considerations of the previous section, these will naturally correspond to parallel sections of the adjoint tractor bundle $\mathscr{G}\times_H\mathfrak{g}$ with respect to the tractor connection $\nabla^\Ad$. Because developments of $\mathscr{g}$ are all of the form $\Hol_\mathscr{e}(\mathscr{G},\omega)g$ whenever $g$ is a development of $\mathscr{g}$, $\widetilde{Z}$ is well-defined, since if $jg$ were another development of $\mathscr{g}$, then $\Ad_{(jg)^{-1}}Z=\Ad_{g^{-1}}(\Ad_{j^{-1}}Z)=\Ad_{g^{-1}}Z$. On the model geometry $(G,H)$, these are precisely the right-invariant vector fields, so we will call the vector fields $\widetilde{Z}$ \emph{right-invariant vector fields} on $\mathscr{G}$.

Since they coincide in the model geometry, we would like infinitesimal automorphisms and right-invariant vector fields to be the same thing. We will soon see that they often are not, but as we should expect, the curvature and holonomy will tell us exactly how they can differ and constrain how badly they can fail to coincide.

Given a vector field $\eta$ on $\mathscr{G}$ and $X\in\mathfrak{g}$, \begin{align*}\Omega(\eta\wedge\omega^{-1}(X)) & =\mathrm{d}\omega(\eta\wedge\omega^{-1}(X))+[\omega(\eta),X] \\ & =\eta\omega(\omega^{-1}(X))-\omega^{-1}(X)\omega(\eta)-\omega([\eta,\omega^{-1}(X)])+[\omega(\eta),X] \\ & =-\omega^{-1}(X)\omega(\eta)-\omega([\eta,\omega^{-1}(X)])-[X,\omega(\eta)],\end{align*} so \[\omega^{-1}(X)\omega(\eta)=-\left(\ad_X\omega(\eta)+\Omega(\eta\wedge\omega^{-1}(X))+\omega([\eta,\omega^{-1}(X)])\right).\] In particular, for $\xi\in\mathfrak{aut}(\mathscr{G},\omega)$, $[\xi,\omega^{-1}(X)]=0$, so \[\omega^{-1}(X)\omega(\xi)=-(\ad_X\omega(\xi)+\Omega(\xi\wedge\omega^{-1}(X))).\] Thus, for $\gamma:[0,1]\to\mathscr{G}$ a path with $\gamma(0)=\mathscr{e}$, $\omega(\xi_{\gamma(t)})=\Ad_{\gamma_{{}_G}(t)^{-1}}\omega(\xi_\mathscr{e})$ for all $t\in[0,1]$ if and only if $\Omega(\xi_{\gamma(t)}\wedge\dot{\gamma}(t))=0$ for all $t\in[0,1]$, so $\xi$ is a right-invariant vector field if and only if $\Omega(\xi\wedge\omega^{-1}(X))=0$ for all $X\in\mathfrak{g}$.

Similarly, for $Z\in\mathfrak{z}_\mathfrak{g}(\Hol_\mathscr{e}(\mathscr{G},\omega))$, we know that $\omega^{-1}(X)\omega(\widetilde{Z})=-\ad_X\omega(\widetilde{Z})$, so $\Omega(\widetilde{Z}\wedge\omega^{-1}(X))+\omega([\widetilde{Z},\omega^{-1}(X)])=0$, hence $[\widetilde{Z},\omega^{-1}(X)]=0$ if and only if $\Omega(\widetilde{Z}\wedge\omega^{-1}(X))=0$. Because $\Rt{h*}\widetilde{Z}=\widetilde{Z}$ for all $h\in H$, this means that $\widetilde{Z}$ is an infinitesimal automorphism if and only if it is a complete vector field and $\Omega(\widetilde{Z}\wedge\omega^{-1}(X))=0$ for all $X\in\mathfrak{g}$.

Thinking of right-invariant vector fields as parallel sections of the adjoint tractor connection, this equivalence of infinitesimal automorphisms and right-invariant vector fields modulo curvature is essentially a consequence of Lemma 1.5.12 in \cite{CapSlovak}.

We can also constrain how far $\omega(\xi_\mathscr{g})$ can be from $\Ad_{\mathscr{g}^{-1}}\omega(\xi_\mathscr{e})$ in terms of the holonomy.

\begin{proposition}Suppose $\xi\in\mathfrak{aut}(\mathscr{G},\omega)$. Let $\mathfrak{hol}_\mathscr{e}(\mathscr{G},\omega)$ denote the Lie subalgebra of $\mathfrak{g}$ whose one-parameter subgroups are contained in $\Hol_\mathscr{e}(\mathscr{G},\omega)$. Then, if $g$ is a development of $\mathscr{g}$ from $\mathscr{e}$, then \[\omega(\xi_\mathscr{g})\in\Ad_{g^{-1}}\left(\omega(\xi_\mathscr{e})+\mathfrak{hol}_\mathscr{e}(\mathscr{G},\omega)\right).\]\end{proposition}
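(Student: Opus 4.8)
Working forward, my plan is to track $\omega(\xi)$ along a path and show that the sole obstruction to its being $\Ad$-covariant is a curvature term whose holonomy-transport lands in $\mathfrak{hol}_\mathscr{e}(\mathscr{G},\omega)$. It suffices to treat a development $g$ arising from a path ending exactly at $\mathscr{g}$: a general development has the form $g=\gamma_{{}_G}(1)h$ with $\gamma(0)=\mathscr{e}$ and $\gamma(1)h=\mathscr{g}$, and since $\Rt{h*}\xi=\xi$ and $\Rt{h}^*\omega=\Ad_{h^{-1}}\omega$ yield $\omega(\xi_\mathscr{g})=\Ad_{h^{-1}}\omega(\xi_{\gamma(1)})$, the statement for $\gamma(1)$ (with development $\gamma_{{}_G}(1)$) upgrades to the statement for $\mathscr{g}$ after conjugating by $h$, because $\Ad_{h^{-1}}\Ad_{\gamma_{{}_G}(1)^{-1}}=\Ad_{g^{-1}}$. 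So assume $\gamma(1)=\mathscr{g}$ and $g=\gamma_{{}_G}(1)$, and set $Y(t):=\omega(\xi_{\gamma(t)})$ and $X(t):=\omega(\dot\gamma(t))$. The identity $\omega^{-1}(X)\omega(\xi)=-(\ad_X\omega(\xi)+\Omega(\xi\wedge\omega^{-1}(X)))$ derived just before the statement, evaluated along $\dot\gamma(t)=\omega^{-1}(X(t))_{\gamma(t)}$, gives the linear ODE $\dot Y(t)=-\ad_{X(t)}Y(t)-\Omega^\omega_{\gamma(t)}(Y(t)\wedge X(t))$.

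Next I would pass to the transported quantity $Z(t):=\Ad_{\gamma_{{}_G}(t)}Y(t)$, so that $Z(0)=\omega(\xi_\mathscr{e})$ and $Z(1)=\Ad_{g}\omega(\xi_\mathscr{g})$. Using $\dot{\gamma_{{}_G}}(t)=\Lt{\gamma_{{}_G}(t)*}X(t)$ (the defining relation $\gamma^*\omega=\gamma_{{}_G}^*\MC{G}$ of the development), differentiating $Z$ cancels the $\ad_{X(t)}$ terms and leaves only the curvature contribution, $\dot Z(t)=-\Ad_{\gamma_{{}_G}(t)}\Omega^\omega_{\gamma(t)}(Y(t)\wedge X(t))$; this cancellation is exactly the ``iff'' already recorded before the statement, where $Z$ is constant precisely when the curvature term vanishes. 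Integrating,
\[
\Ad_{g}\omega(\xi_\mathscr{g})-\omega(\xi_\mathscr{e})=Z(1)-Z(0)=-\int_0^1\Ad_{\gamma_{{}_G}(t)}\Omega^\omega_{\gamma(t)}(Y(t)\wedge X(t))\,dt,
\]
so, since $\mathfrak{hol}_\mathscr{e}(\mathscr{G},\omega)$ is a linear subspace and hence closed under integration, the whole result reduces to showing that each integrand lies in $\mathfrak{hol}_\mathscr{e}(\mathscr{G},\omega)$.

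The crux, and the step I expect to be the main obstacle, is proving $\Ad_{\gamma_{{}_G}(t)}\Omega^\omega_{\gamma(t)}(Y(t)\wedge X(t))\in\mathfrak{hol}_\mathscr{e}(\mathscr{G},\omega)$ for each $t$; this is an Ambrose--Singer-type statement, and I would deduce it extrinsically. The established equality $\Hol_\mathscr{e}(\mathscr{G},\omega)=\Hol_{(\mathscr{e},e)}(\widehat{\mathscr{G}},\widehat\omega)$ identifies $\mathfrak{hol}_\mathscr{e}(\mathscr{G},\omega)$ with the holonomy algebra of the principal connection $\widehat\omega$ at $(\mathscr{e},e)$. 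From the proof that intrinsic and extrinsic holonomy agree, the point $(\gamma(t),\gamma_{{}_G}(t)^{-1})$ lies on the horizontal curve through $(\mathscr{e},e)$, hence in the holonomy subbundle, on which $\widehat\omega$ reduces to an $\mathfrak{hol}_\mathscr{e}(\mathscr{G},\omega)$-valued connection whose curvature is the restriction of $\widehat\Omega$. Using $j^*\widehat\omega=\omega$ (so $j^*\widehat\Omega=\Omega$) together with the equivariance $\Rt{a}^*\widehat\Omega=\Ad_{a^{-1}}\widehat\Omega$ for $a=\gamma_{{}_G}(t)^{-1}$, the transported intrinsic curvature $\Ad_{\gamma_{{}_G}(t)}\Omega^\omega_{\gamma(t)}(Y(t)\wedge X(t))$ is precisely the value of $\widehat\Omega$ at $(\gamma(t),\gamma_{{}_G}(t)^{-1})$ on the images of $V=j_*(\omega^{-1}(Y(t))_{\gamma(t)})$ and $W=j_*(\omega^{-1}(X(t))_{\gamma(t)})$ under $\Rt{a*}$; since $\widehat\Omega$ is horizontal and its restriction to the holonomy subbundle is $\mathfrak{hol}_\mathscr{e}(\mathscr{G},\omega)$-valued, membership follows. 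Heuristically this is just the intrinsic fact that developing a small loop at $\gamma(t)$ spanned by $X,Y$ produces holonomy $\exp(-\varepsilon\,\Omega^\omega_{\gamma(t)}(X\wedge Y))$ in $\Hol_{\gamma(t)}(\mathscr{G},\omega)=\gamma_{{}_G}(t)^{-1}\Hol_\mathscr{e}(\mathscr{G},\omega)\gamma_{{}_G}(t)$, but making that rigorous is itself the content of Ambrose--Singer, so I would lean on the extrinsic connection for the honest argument. Assembling the pieces gives $\Ad_{g}\omega(\xi_\mathscr{g})\in\omega(\xi_\mathscr{e})+\mathfrak{hol}_\mathscr{e}(\mathscr{G},\omega)$, that is, $\omega(\xi_\mathscr{g})\in\Ad_{g^{-1}}(\omega(\xi_\mathscr{e})+\mathfrak{hol}_\mathscr{e}(\mathscr{G},\omega))$.
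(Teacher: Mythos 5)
Your argument is correct, but it takes a genuinely different route from the paper's. The paper exploits the fact that $\xi$ integrates to a one-parameter group of automorphisms: setting $\gamma(t)=\exp(t\xi)\mathscr{g}$ and using that automorphisms act on developments as left translations, one gets $g\gamma_{{}_G}(t)\in\Hol_\mathscr{e}(\mathscr{G},\omega)\exp(t\omega(\xi_\mathscr{e}))g$, hence $\gamma_{{}_G}(t)=g^{-1}\bigl(\gamma_{\Hol}(t)\exp(t\omega(\xi_\mathscr{e}))\bigr)g$ for a curve $\gamma_{\Hol}$ in the holonomy group with $\gamma_{\Hol}(0)=e$; differentiating at $t=0$ finishes the proof in a few lines with no external input. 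You instead transport $\omega(\xi)$ along an arbitrary path from $\mathscr{e}$ to $\mathscr{g}$, identify the failure of $\Ad$-equivariance as an integrated curvature term, and invoke the Ambrose--Singer theorem on $(\widehat{\mathscr{G}},\widehat{\omega})$ --- via the identification of intrinsic and extrinsic holonomy, the horizontality and equivariance of $\widehat{\Omega}$, and the fact that $t\mapsto(\gamma(t),\gamma_{{}_G}(t)^{-1})$ is the horizontal lift through $(\mathscr{e},e)$ --- to place each integrand in $\mathfrak{hol}_\mathscr{e}(\mathscr{G},\omega)$. Your computations check out: the reduction to the case $h=e$ via $\Rt{h*}\xi=\xi$ and $\Rt{h}^*\omega=\Ad_{h^{-1}}\omega$, the ODE $\dot{Z}=-\Ad_{\gamma_{{}_G}}\Omega^\omega(Y\wedge X)$, and the identification of the integrand with a value of $\widehat{\Omega}$ on the holonomy subbundle are all right. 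What your route buys is an explicit integral formula for $\Ad_{g}\omega(\xi_\mathscr{g})-\omega(\xi_\mathscr{e})$ purely in terms of curvature along the path, sharpening the earlier observation that $\xi$ is right-invariant if and only if $\Omega(\xi\wedge\cdot)=0$, and it never uses completeness of $\xi$, only its bracket relation with the $\omega$-constant fields, so it would apply to local infinitesimal automorphisms defined along the chosen path. The cost is importing Ambrose--Singer and the reduction to the holonomy subbundle, which the paper's intrinsic flow argument avoids entirely.
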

\begin{proof}Let $\gamma:\mathbb{R}\to\mathscr{G}$ be the curve $t\mapsto\exp(t\xi)\mathscr{g}$. We want to show that $\dot{\gamma}_{{}_G}(0)=\omega(\dot{\gamma}(0))\in\Ad_{g^{-1}}(\omega(\xi_\mathscr{e})+\mathfrak{hol}_\mathscr{e}(\mathscr{G},\omega))$. Because $\xi$ is an infinitesimal automorphism, we know that $g\gamma_{{}_G}(t)\in\Hol_\mathscr{e}(\mathscr{G},\omega)\exp(t\omega(\xi_\mathscr{e}))g$, so we get a curve $\gamma_{\Hol}:t\mapsto g\gamma_{{}_G}(t)g^{-1}\exp(t\omega(\xi_\mathscr{e}))^{-1}\in\Hol_\mathscr{e}(\mathscr{G},\omega)$ such that $\gamma_{\Hol}(0)=e$ and $\gamma_{{}_G}(t)=g^{-1}(\gamma_{\Hol}(t)\exp(t\omega(\xi_\mathscr{e})))g$, hence \[\dot{\gamma}_{{}_G}(0)=\Ad_{g^{-1}}(\dot{\gamma}_{\Hol}(0)+\omega(\xi_\mathscr{e}))\in\Ad_{g^{-1}}\left(\mathfrak{hol}_\mathscr{e}(\mathscr{G},\omega)+\omega(\xi_\mathscr{e})\right).\quad\mbox{\qedhere}\]\end{proof}

\section{Applications}\label{applications}
\subsection{Product decompositions}
We can use the ideas above to generalize the de Rham decomposition theorem for Riemannian manifolds. To get there, we will need a couple of definitions.

\begin{definition}Suppose $(G,H)$ and $(Q,K)$ are model geometries. Let $i:H\to K$ be a homomorphism and $\lambda:\mathfrak{g}\to\mathfrak{q}$ be a linear map such that $\lambda\circ\Ad_h=\Ad_{i(h)}\circ\lambda$ for all $h\in H$, $\lambda|_\mathfrak{h}=i_*$, and $\bar{\lambda}:\mathfrak{g}/\mathfrak{h}\to\mathfrak{q}/\mathfrak{k}$ is a linear isomorphism. Then, the \emph{extension functor} corresponding to $(i,\lambda)$ takes Cartan geometries of type $(G,H)$ to Cartan geometries of type $(Q,K)$ by \[(\mathscr{G},\omega)\longmapsto(\mathscr{G}\times_iK,\omega_\lambda),\] where $\omega_\lambda$ is uniquely characterized by $j^*\omega_\lambda=\lambda(\omega)$, with $j:\mathscr{g}\mapsto(\mathscr{g},e)$.\end{definition}

Using inclusion maps for both $i$ and $\lambda$ gives a way to reverse a holonomy reduction on a single curved orbit. That is, if $(\mathscr{G},\omega)$ is a Cartan geometry of type $(G,H)$, and we make a holonomy reduction with basepoint $\mathscr{e}\in\mathscr{G}$ and subgroup $J$ such that $q_{{}_H}(J\bullet\mathscr{e})=q_{{}_H}(\mathscr{G})$, then $J\bullet\mathscr{e}$ has the structure of a Cartan geometry of type $(J,H\cap J)$, and the extension functor using inclusion maps recovers $(\mathscr{G},\omega)$ from $J\bullet\mathscr{e}$.

We can, then, define product geometries as extensions of Cartesian products of two given geometries.

\begin{definition}Suppose $(A,P)$, $(B,K)$, and $(G,H)$ are model geometries such that $i:A\times B\hookrightarrow G$ determines an extension functor from $(A\times B,P\times K)$ to $(G,H)$. Let $(\mathscr{A},\alpha)$ be a Cartan geometry of type $(A,P)$ and $(\mathscr{B},\beta)$ be a Cartan geometry of type $(B,K)$. Then, a Cartan geometry $(\mathscr{G},\omega)$ is the \emph{product of type $(G,H)$} of $(\mathscr{A},\alpha)$ and $(\mathscr{B},\beta)$ if and only if it is geometrically isomorphic to the extension of $(\mathscr{A}\times\mathscr{B},\alpha+\beta)$ to type $(G,H)$.\end{definition}

Equivalently, we could say that $(\mathscr{G},\omega)$ is the product of type $(G,H)$ of $(\mathscr{A},\alpha)$ and $(\mathscr{B},\beta)$ if and only if, for some $\mathscr{e}\in\mathscr{G}$, $\mathscr{e}\bullet AB$ is well-defined, $q_{{}_H}(\mathscr{e}\bullet AB)=q_{{}_H}(\mathscr{G})$, and the induced geometric structure on $\mathscr{e}\bullet AB$ is geometrically isomorphic to $(\mathscr{A}\times\mathscr{B},\alpha+\beta)$.

For example, the models $(\Iso(m),\Orth(m))$ and $(\Iso(n),\Orth(n))$ for Riemannian geometries in dimension $m$ and $n$, respectively, fit together, so that the inclusion map $\Iso(m)\times\Iso(n)\hookrightarrow\Iso(m+n)$ gives an extension functor from $(\Iso(m)\times\Iso(n),\Orth(m)\times\Orth(n))$ to $(\Iso(m+n),\Orth(m+n))$. Thus, given two Riemannian geometries, we can take their product and extend to a new Riemannian geometry, which is just the standard Riemannian product.

By vague analogy to how a connected Lie group $G$ with closed subgroups $A$ and $B$ such that $\mathfrak{a}+\mathfrak{b}=\mathfrak{g}$ is isomorphic to the product $A\times B$ if and only if $A\cap B=[A,B]=\{e\}$, the following theorem gives necessary and sufficient conditions for a Cartan geometry to be a product.


\begin{theorem}\label{productstructure} Suppose $(A,P)$, $(B,K)$, and $(G,H)$ are model geometries such that $A\times B\hookrightarrow G$ determines an extension functor from $(A\times B,P\times K)$ to $(G,H)$. Let $(\mathscr{G},\omega)$ be a Cartan geometry of type $(G,H)$ over a connected smooth manifold $M$, with quotient map $q_{{}_H}:\mathscr{G}\to M$, and fix $\mathscr{e}\in\mathscr{G}$ such that $\mathscr{e}\circ A$, $\mathscr{e}\circ B$, and $\mathscr{e}\circ AB$ are well-defined and $q_{{}_H}(\mathscr{e}\circ AB)=M$. Then, $(\mathscr{G},\omega)$ is the product of type $(G,H)$ of $\mathscr{e}\circ A$ and $\mathscr{e}\circ B$ if and only if the following hold:
\begin{itemize}
\item $\mathscr{e}\circ A\cap\mathscr{e}\circ B=\{\mathscr{e}\}$;
\item $\exp(\omega^{-1}(X))\exp(\omega^{-1}(Y))\mathscr{g}=\exp(\omega^{-1}(Y))\exp(\omega^{-1}(X))\mathscr{g}$ for every $X\in\mathfrak{a}$, $Y\in\mathfrak{b}$, and $\mathscr{g}\in\mathscr{e}\circ AB$ such that either side is well-defined; 
\item if $\mathscr{g}\in\mathscr{e}\circ AB$ and $\mathscr{g}h\in\mathscr{e}\circ AB$ for some $h\in H$, then $h\in PK$.
\end{itemize}\end{theorem}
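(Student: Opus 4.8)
The plan is to read the statement as the Cartan-geometric version of the Lie-theoretic fact that a connected $G$ with $\mathfrak a+\mathfrak b=\mathfrak g$ splits as $A\times B$ precisely when $A\cap B=\{e\}$ and $[A,B]=\{e\}$. The model hypothesis that $A\times B\hookrightarrow G$ determines an extension functor already encodes that $A$ and $B$ commute in $G$, meet only in $e$, and satisfy $AB\cong A\times B$ with $H\cap AB=PK\cong P\times K$; I would first record these, together with the leaf identifications $\mathscr{e}\circ A\cong(A,P)$, $\mathscr{e}\circ B\cong(B,K)$, and $\mathscr{e}\circ AB\cong(A\times B,P\times K)$, so that $(\mathscr{e}\circ A)\times(\mathscr{e}\circ B)$ is a Cartan geometry of type $(A\times B,P\times K)$ with connection $\alpha+\beta$. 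Using the equivalent characterization of products stated before the theorem and the remark that an extension functor built from inclusions reverses a holonomy reduction on a curved orbit filling the base, it then suffices to produce a geometric isomorphism between the geometry induced on $\mathscr{e}\circ AB$ and $((\mathscr{e}\circ A)\times(\mathscr{e}\circ B),\alpha+\beta)$; the hypothesis $q_{{}_H}(\mathscr{e}\circ AB)=M$ upgrades this, after applying the functor, to the asserted isomorphism of $(\mathscr{G},\omega)$ with the product.

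For the forward direction I would build a ``multiplication'' $m\colon(\mathscr{e}\circ A)\times(\mathscr{e}\circ B)\to\mathscr{e}\circ AB$ by flowing. Writing $\mathscr{a}=\exp(\omega^{-1}(X_k))\cdots\exp(\omega^{-1}(X_1))\mathscr{e}$ with $X_i\in\mathfrak a$ and $\mathscr{b}=\exp(\omega^{-1}(Y_\ell))\cdots\exp(\omega^{-1}(Y_1))\mathscr{e}$ with $Y_j\in\mathfrak b$, set $m(\mathscr{a},\mathscr{b})=\exp(\omega^{-1}(X_k))\cdots\exp(\omega^{-1}(X_1))\mathscr{b}$, deferring the $P$- and $K$-parts to equivariance. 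The second hypothesis (commuting flows) is exactly what makes this symmetric, hence independent of how the two families of flows are interleaved, and is the analogue of $[A,B]=\{e\}$ making $(a,b)\mapsto ab$ a homomorphism. I would then verify that $m$ is $(P\times K)$-equivariant and that $m^*\omega=\lambda(\alpha+\beta)$, the latter because $\omega$ reproduces $\alpha$ along $\mathfrak a$-flows and $\beta$ along $\mathfrak b$-flows; thus the $H$-equivariant extension of $m$ to $((\mathscr{e}\circ A)\times(\mathscr{e}\circ B))\times_{P\times K}H$ is a geometric map.

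It remains to see that this extension is a bijection onto $\mathscr{e}\circ AB$. Surjectivity holds because any point of the leaf $\mathscr{e}\circ AB$ is reached from $\mathscr{e}$ by a string of $\omega^{-1}(\mathfrak a+\mathfrak b)$-flows, which the commuting hypothesis reorders into $\mathfrak a$-flows followed by $\mathfrak b$-flows, landing in the image of $m$; the third hypothesis forces the leftover structure-group element into $PK$, so nothing outside $P\times K$ is needed. For injectivity, an equality $m(\mathscr{a}_1,\mathscr{b}_1)=m(\mathscr{a}_2,\mathscr{b}_2)$ can, via the commuting flows, be rearranged to exhibit a single point lying in both $\mathscr{e}\circ A$ and $\mathscr{e}\circ B$, whereupon the first hypothesis $\mathscr{e}\circ A\cap\mathscr{e}\circ B=\{\mathscr{e}\}$ together with the third forces $\mathscr{a}_1=\mathscr{a}_2$ and $\mathscr{b}_1=\mathscr{b}_2$. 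The converse is comparatively routine: assuming $(\mathscr{G},\omega)$ is the product, I would compute directly in the model $((\mathscr{e}\circ A)\times(\mathscr{e}\circ B))\times_{P\times K}H$, where the commuting of the $\mathfrak a$- and $\mathfrak b$-flows, the trivial intersection of the two leaves, and $H\cap(A\times B)=P\times K$ all read off from the product structure.

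I expect the crux to be the well-definedness and injectivity of $m$, rather than the transparent commuting condition: both require that distinct flow representatives of the same point yield the same value, a holonomy-within-the-leaf phenomenon that has to be reconciled with the development bookkeeping, and it is exactly here that the first and third hypotheses carry the argument.
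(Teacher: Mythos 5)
Your proposal is correct and matches the paper's argument in all essentials: the paper constructs the inverse of your map $m$, namely $\varphi:(\exp(\omega^{-1}(\overline{X}))\exp(\omega^{-1}(\overline{Y}))\mathscr{e})h\mapsto(\exp(\omega^{-1}(\overline{X}))\mathscr{e},\exp(\omega^{-1}(\overline{Y}))\mathscr{e},h)$ into $(\mathscr{e}\circ A\times\mathscr{e}\circ B)\times_{PK}H$, and the crux you identify (injectivity/well-definedness via the first and third hypotheses) is exactly the paper's well-definedness check, carried out by the same rearrangement that produces a point of $\mathscr{e}\circ A\cap\mathscr{e}\circ B$.
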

\begin{proof}The forward implication is clear, so we just need to prove that the conditions are sufficient to give the product structure. To do this, we will define a geometric map that is clearly an isomorphism if well-defined, then prove that it is well-defined. For the purpose of clean presentation, we will use the notation $\exp(\omega^{-1}(\overline{X}))$ to denote a composition $\exp(\omega^{-1}(X_k))\cdots\exp(\omega^{-1}(X_1))$ throughout this proof.

Suppose the conditions given above are satisfied. Since $q_{{}_H}(\mathscr{e}\circ AB)=M$, every $\mathscr{g}\in\mathscr{G}$ is of the form $\mathscr{g}'h$ for some $\mathscr{g}'\in\mathscr{e}\circ AB$ and $h\in H$, and by the condition on the flows, every $\mathscr{g}'\in\mathscr{e}\circ AB$ is of the form $(\exp(\omega^{-1}(\overline{X}))\exp(\omega^{-1}(\overline{Y}))\mathscr{e})pk$, where $p\in P$, $k\in K$, and each $X_i\in\mathfrak{a}$ and $Y_i\in\mathfrak{b}$. Therefore, every $\mathscr{g}\in\mathscr{G}$ is of the form $(\exp(\omega^{-1}(\overline{X}))\exp(\omega^{-1}(\overline{Y}))\mathscr{e})h$ for some $X_i\in\mathfrak{a}$, $Y_i\in\mathfrak{b}$, and $h\in H$, so we can consider a map $\varphi:\mathscr{G}\to(\mathscr{e}\circ A\times\mathscr{e}\circ B)\times_{PK}H$ given by \[(\exp(\omega^{-1}(\overline{X}))\exp(\omega^{-1}(\overline{Y}))\mathscr{e})h\mapsto(\exp(\omega^{-1}(\overline{X}))\mathscr{e},\exp(\omega^{-1}(\overline{Y}))\mathscr{e},h).\]

Let us suppose that $\varphi$ is well-defined. Then, it is $H$-equivariant, and since the Cartan connections on the geometric leaves are given by taking the Cartan connection on $\mathscr{G}$ and pulling back by inclusion, it follows that $\varphi$ is geometric. Moreover, since $(\mathscr{a},\mathscr{b},h)=(\mathscr{a}',\mathscr{b}',h')$ implies that there are $p\in P$ and $k\in K$ such that $pkh=h'$, $\mathscr{a}p=\mathscr{a}'$, and $\mathscr{b}k=\mathscr{b}'$, $\varphi$ is injective, hence bijective. Thus, if we prove $\varphi$ is well-defined, then it will give us the desired geometric isomorphism.
 
Suppose \[(\exp(\omega^{-1}(\overline{X}))\exp(\omega^{-1}(\overline{Y}))\mathscr{e})h=(\exp(\omega^{-1}(\overline{X'}))\exp(\omega^{-1}(\overline{Y'}))\mathscr{e})h'.\] Then, \[\exp(\omega^{-1}(\overline{X}))\exp(\omega^{-1}(\overline{Y}))\mathscr{e}=(\exp(\omega^{-1}(\overline{X'}))\exp(\omega^{-1}(\overline{Y'}))\mathscr{e})h'h^{-1},\] so $h'h^{-1}\in PK$. Writing $h'h^{-1}=pk$ for $p\in P$ and $k\in K$, we get that \begin{align*}(\exp(\omega^{-1}(\overline{X}))\exp(\omega^{-1}(\overline{Y}))\mathscr{e})k^{-1} & =\exp(\omega^{-1}(\overline{X}))(\exp(\omega^{-1}(\overline{Y}))\mathscr{e})k^{-1} \\ & =(\exp(\omega^{-1}(\overline{X'}))\exp(\omega^{-1}(\overline{Y'}))\mathscr{e})p \\ & =\exp(\omega^{-1}(\overline{Y'}))(\exp(\omega^{-1}(\overline{X'}))\mathscr{e})p,\end{align*} so \[\exp(\omega^{-1}(\overline{Y'}))^{-1}\left((\exp(\omega^{-1}(\overline{Y}))\mathscr{e})k^{-1}\right)=\exp(\omega^{-1}(\overline{X}))^{-1}\left((\exp(\omega^{-1}(\overline{X'}))\mathscr{e})p\right).\] Thus, since \[\exp(\omega^{-1}(\overline{Y'}))^{-1}\left((\exp(\omega^{-1}(\overline{Y}))\mathscr{e})k^{-1}\right)\in\mathscr{e}\circ B\] and \[\exp(\omega^{-1}(\overline{X}))^{-1}\left((\exp(\omega^{-1}(\overline{X'}))\mathscr{e})p\right)\in\mathscr{e}\circ A,\] both are equal to $\mathscr{e}$ because we are assuming $\mathscr{e}\circ A\cap\mathscr{e}\circ B=\{\mathscr{e}\}$. It follows that $\exp(\omega^{-1}(\overline{X}))\mathscr{e}=(\exp(\omega^{-1}(\overline{X'}))\mathscr{e})p$ and $\exp(\omega^{-1}(\overline{Y}))\mathscr{e}=(\exp(\omega^{-1}(\overline{Y'}))\mathscr{e})k$, hence \begin{align*}\varphi\left((\exp(\omega^{-1}(\overline{X}))\exp(\omega^{-1}(\overline{Y}))\mathscr{e})h\right) & =(\exp(\omega^{-1}(\overline{X}))\mathscr{e},\exp(\omega^{-1}(\overline{Y}))\mathscr{e},h) \\ & =((\exp(\omega^{-1}(\overline{X'}))\mathscr{e})p,(\exp(\omega^{-1}(\overline{Y'}))\mathscr{e})k,h) \\ & =(\exp(\omega^{-1}(\overline{X'}))\mathscr{e},\exp(\omega^{-1}(\overline{Y'}))\mathscr{e},pkh) \\ & =(\exp(\omega^{-1}(\overline{X'}))\mathscr{e},\exp(\omega^{-1}(\overline{Y'}))\mathscr{e},h') \\ & =\varphi\left((\exp(\omega^{-1}(\overline{X'}))\exp(\omega^{-1}(\overline{Y'}))\mathscr{e})h'\right)\!.~\mbox{\qedhere}\end{align*}\end{proof}

Of the above three conditions, the one that can be conceivably strenuous to verify is that the flows commute whenever either composition makes sense. As such, additional constraints on a geometry that can simplify this condition can be helpful in practice. Toward this end, note that there are two parts to verifying that the flow condition holds: that $\exp(\omega^{-1}(X))\exp(\omega^{-1}(Y))$ and $\exp(\omega^{-1}(Y))\exp(\omega^{-1}(X))$ are equal on $\mathscr{e}\circ AB$ wherever both are defined and that if either of them is well-defined at some point of $\mathscr{e}\circ AB$, then so is the other.

The first part is fairly straightforward to get around: given two vector fields $V$ and $W$, $\exp(sV)\exp(tW)=\exp(tW)\exp(sV)$ for all $s,t\in\mathbb{R}$ such that both sides are well-defined if and only if $[V,W]=0$. In our case, $[\omega^{-1}(X),\omega^{-1}(Y)]=\omega^{-1}([X,Y]-\Omega^\omega(X\wedge Y))$, so since $[X,Y]=0$ for $X\in\mathfrak{a}$ and $Y\in\mathfrak{b}$, the first part of the flow condition holds if and only if $\Omega^\omega_\mathscr{g}(X\wedge Y)=0$ for all $X\in\mathfrak{a}$, $Y\in\mathfrak{b}$, and $\mathscr{g}\in\mathscr{e}\circ AB$.

This condition on the curvature is sufficient to apply Theorem \ref{productstructure} locally. In other words, if $\mathscr{e}\circ AB$ is well-defined and $\Omega^{\omega}_\mathscr{g}(X\wedge Y)=0$ for all $X\in\mathfrak{a}$, $Y\in\mathfrak{b}$, and $\mathscr{g}\in\mathscr{e}\circ AB$, then for each $\mathscr{g}\in\mathscr{e}\circ AB$ such that $\mathscr{g}\circ A$ and $\mathscr{g}\circ B$ are well-defined, there exists a neighborhood of $U$ of $q_{{}_H}(\mathscr{g})\in M$ such that the restricted geometry $(q_{{}_H}^{-1}(U),\omega)$ satisfies the conditions of the theorem when we take the basepoint to be $\mathscr{g}$. We can get such a $U$ by taking sufficiently small open sets $U_\mathfrak{a}\subset\mathfrak{a}$ and $U_\mathfrak{b}\subset\mathfrak{b}$ and setting $U=q_{{}_H}(\exp(\omega^{-1}(U_\mathfrak{a}))\exp(\omega^{-1}(U_\mathfrak{b}))\mathscr{g})$.

The second part of the flow condition, that if either of the compositions $\exp(\omega^{-1}(X))\exp(\omega^{-1}(Y))\mathscr{g}$ or $\exp(\omega^{-1}(Y))\exp(\omega^{-1}(X))\mathscr{g}$ is well-defined for $\mathscr{g}\in\mathscr{e}\circ AB$ then so is the other, is slightly trickier. However, we can bypass it by simply guaranteeing that the flows are always well-defined. This constraint is called completeness.

\begin{definition}A Cartan geometry $(\mathscr{G},\omega)$ of type $(G,H)$ is \emph{complete} if and only if $\omega^{-1}(X)$ is a complete vector field whenever $X\in\mathfrak{g}$.\end{definition}

Thus, if $(\mathscr{G},\omega)$ is complete and $\Omega^\omega_\mathscr{g}(X\wedge Y)=0$ for all $X\in\mathfrak{a}$, $Y\in\mathfrak{b}$, and $\mathscr{g}\in\mathscr{e}\circ AB$, then the flow condition in Theorem \ref{productstructure} is automatically satisfied. If we also assume that the base manifold $M$ is simply connected, then we can further show that all three conditions are satisfied.

\begin{corollary}\label{deRham} Suppose $(A,P)$, $(B,K)$, and $(G,H)$ are model geometries such that $i:A\times B\hookrightarrow G$ determines an extension functor from $(A\times B,P\times K)$ to $(G,H)$. Let $(\mathscr{G},\omega)$ be a complete Cartan geometry of type $(G,H)$ over a simply connected smooth manifold $M$, with quotient map $q_{{}_H}:\mathscr{G}\to M$. Fix $\mathscr{e}\in\mathscr{G}$ such that $\mathscr{e}\circ A$, $\mathscr{e}\circ B$, and $\mathscr{e}\circ AB$ are well-defined and $q_{{}_H}(\mathscr{e}\circ AB)=M$. Then, $(\mathscr{G},\omega)$ is the product of type $(G,H)$ of $\mathscr{e}\circ A$ and $\mathscr{e}\circ B$ if and only if $\Omega^\omega_\mathscr{g}(X\wedge Y)=0$ whenever $X\in\mathfrak{a}$, $Y\in\mathfrak{b}$, and $\mathscr{g}\in\mathscr{e}\circ AB$.\end{corollary}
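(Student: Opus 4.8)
The plan is to reduce everything to Theorem~\ref{productstructure}, whose three conditions characterize the product structure. The forward implication is immediate: if $(\mathscr{G},\omega)$ is the product, then on $\mathscr{e}\circ AB\cong\mathscr{A}\times\mathscr{B}$ the field $\omega^{-1}(X)$ with $X\in\mathfrak{a}$ is tangent to the $\mathscr{A}$-factor and $\omega^{-1}(Y)$ with $Y\in\mathfrak{b}$ is tangent to the $\mathscr{B}$-factor, so they commute and $\Omega^\omega_\mathscr{g}(X\wedge Y)=[X,Y]-\omega([\omega^{-1}(X),\omega^{-1}(Y)])=0$ there. For the converse, the discussion preceding the corollary already disposes of the second (flow) condition: completeness guarantees both iterated flows are everywhere defined, and the curvature hypothesis together with $[\mathfrak{a},\mathfrak{b}]=0$ forces $[\omega^{-1}(X),\omega^{-1}(Y)]=0$ on $\mathscr{e}\circ AB$, so the flows commute. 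Thus the entire content of the converse is to extract the first condition $\mathscr{e}\circ A\cap\mathscr{e}\circ B=\{\mathscr{e}\}$ and the third condition (that $\mathscr{g},\mathscr{g}h\in\mathscr{e}\circ AB$ force $h\in PK$) from simple connectedness of $M$.

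To set this up, I would first record that completeness passes to the leaves: since $\mathscr{e}\circ A$ and $\mathscr{e}\circ B$ are unions of integral manifolds of $\omega^{-1}(\mathfrak{a})$ and $\omega^{-1}(\mathfrak{b})$, and the relevant $\omega$-constant fields are complete on $\mathscr{G}$ and tangent to the leaves, the induced geometries $(\mathscr{A},\alpha):=\mathscr{e}\circ A$ and $(\mathscr{B},\beta):=\mathscr{e}\circ B$ are themselves complete, hence so is their product and the extension $(\widetilde{\mathscr{G}},\widetilde\omega)$ of $(\mathscr{A}\times\mathscr{B},\alpha+\beta)$ to type $(G,H)$. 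The local product structure observed after Theorem~\ref{productstructure} then furnishes a geometric local isomorphism between $(\mathscr{G},\omega)$ and $(\widetilde{\mathscr{G}},\widetilde\omega)$ matching the two basepoints, with both geometries now complete.

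The crux is then a monodromy argument globalizing this local product structure over the simply connected base. With the flows in the $\mathfrak{a}$- and $\mathfrak{b}$-directions commuting and complete, each point of $\mathscr{e}\circ AB$ acquires well-defined product coordinates locally, and I would assign global coordinates by lifting a path from the basepoint in $M$ and developing it into $AB\cong A\times B$ to read off its $A$- and $B$-components; completeness makes this defined along every path. Path-independence is the only issue: two paths with common endpoints differ by a loop, which is null-homotopic since $M$ is simply connected, and the local product charts (guaranteed by the commuting flows, that is, by the vanishing mixed curvature) patch consistently along any null-homotopy, so the coordinates are path-independent. These global projections identify $\mathscr{e}\circ AB$ with $\mathscr{A}\times\mathscr{B}$ modulo $PK$, simultaneously yielding $\mathscr{e}\circ A\cap\mathscr{e}\circ B=\{\mathscr{e}\}$ and forcing $h\in PK$ in the third condition. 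I expect this globalization --- patching the local product charts across $M$ using simple connectedness while tracking the structure-group elements so that $h\in PK$ emerges rather than merely $h\in H$ --- to be the main obstacle; by that point the curvature hypothesis has been fully absorbed into the commutativity of the flows, and the remainder is bookkeeping.
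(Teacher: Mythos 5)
Your proposal is correct and follows essentially the same route as the paper: reduce to Theorem~\ref{productstructure}, dispose of the flow condition via completeness together with the vanishing of $\Omega^\omega(\mathfrak{a}\wedge\mathfrak{b})$, and then obtain the remaining two conditions from simple connectedness by a monodromy argument that patches local product charts along null-homotopies --- this patching is exactly the content of the paper's inner lemma (its Kobayashi--Nomizu-style subdivision of the homotopy), which is the step you rightly flag as the main obstacle and which would need to be carried out in full to track the $A$-component of the development and the $PK$-ambiguity. The only organizational difference is that the paper extracts the third condition ($h\in PK$) more cheaply, directly from the fact that $q_{{}_H}|_{\mathscr{e}\circ AB}$ is a fibration, before invoking the local product structure at all.
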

\begin{proof}As we said above, the condition on the curvature together with completeness are sufficient to guarantee the flow condition in Theorem \ref{productstructure} is satisfied. Moreover, since $q_{{}_{PK}}|_{\mathscr{e}\circ AB}:\mathscr{e}\circ AB\to (\mathscr{e}\circ AB)/PK$ is a fiber bundle map and $\bar{q}_{{}_H}:(\mathscr{e}\circ AB)/PK\to M$ is a local diffeomorphism, $q_{{}_H}|_{\mathscr{e}\circ AB}=\bar{q}_{{}_H}\circ q_{{}_{PK}}|_{\mathscr{e}\circ AB}$ is a fibration. Because $M$ is simply connected, there is a homotopy rel endpoints $\gamma$ from each loop $\gamma_0:[0,1]\to M$ based at $q_{{}_H}(\mathscr{g})$ to the constant loop $\gamma_1:t\mapsto q_{{}_H}(\mathscr{g})$, so since $q_{{}_H}|_{\mathscr{e}\circ AB}$ is a fibration, if $\widehat{\gamma_0}$ is a lift of $\gamma_0$ to $\mathscr{e}\circ AB$ with $\widehat{\gamma_0}(0)=\mathscr{g}$, then there is a lift $\widehat{\gamma}$ to $\mathscr{e}\circ AB$ of the homotopy $\gamma$ with $(\widehat{\gamma})_0=\widehat{\gamma_0}$ and $(\widehat{\gamma})_s(0)=\mathscr{g}$ for each $s\in[0,1]$. Since $\widehat{\gamma}$ is a lift of a homotopy rel endpoints and $\gamma_1$ is the constant loop, $(\widehat{\gamma})_s(1)\in(\mathscr{e}\circ AB)\cap\mathscr{g}H$ for each $s\in[0,1]$ and $(\widehat{\gamma})_1(t)\in\mathscr{g}PK$ for each $t\in[0,1]$, so $(\widehat{\gamma})_s(1)\in\mathscr{e}PK$ for each $s\in[0,1]$. In particular, if $\mathscr{g}$ and $\mathscr{g}h$ are in $\mathscr{e}\circ AB$, then $h\in PK$. Thus, it only remains to show that $\mathscr{e}\circ A\cap\mathscr{e}\circ B=\{\mathscr{e}\}$.

Suppose $\mathscr{g}\in\mathscr{e}\circ A\cap\mathscr{e}\circ B$. Then, there exist paths $\alpha:[0,1]\to\mathscr{e}\circ A$ and $\beta:[0,1]\to\mathscr{e}\circ B$, $p\in P$, and $k\in K$ such that $\alpha(0)=\beta(0)=\mathscr{e}$ and $\alpha(1)p=\beta(1)k=\mathscr{g}$. Again using that $q_{{}_H}|_{\mathscr{e}\circ AB}$ is a fibration and $M$ is simply connected, there exists a homotopy $\widehat{\gamma}:[0,1]^2\to\mathscr{e}\circ AB$ such that $\widehat{\gamma}_0=\alpha$ and $\widehat{\gamma}_1=\beta\zeta$, where $\zeta:[0,1]\to PK$. Moreover, by right-translating by a path in $PK$ parametrized by $s$ if necessary, we may assume that $\widehat{\gamma}_s(0)=\mathscr{e}$ for all $s\in[0,1]$.

Let $\pi:A\times B\to A$ be the natural quotient map. For $\sigma:[0,1]\to\mathscr{e}\circ AB$ with $\sigma(0)=\mathscr{e}$, define $\pi_\mathscr{e}(\sigma):[0,1]\to\mathscr{e}\circ A$ to be the unique path such that $\pi_\mathscr{e}(\sigma)(0)=\mathscr{e}$ and $\pi_\mathscr{e}(\sigma)_G=\pi(\sigma_G)$.

\begin{lemma}Suppose $\widehat{\gamma}:[0,1]^2\to\mathscr{e}\circ AB$ is a homotopy between two paths $\widehat{\gamma}_0$ and $\widehat{\gamma}_1$ such that, for every $s\in[0,1]$, $\widehat{\gamma}_s(0)=\mathscr{e}$ and $\widehat{\gamma}_s(1)=\widehat{\gamma}_0(1)h_s$ for some $h_s\in PK$. Then, $\pi_\mathscr{e}(\widehat{\gamma}_s)(1)=\pi_\mathscr{e}(\widehat{\gamma}_0)(1)\pi(h_s)$.\end{lemma}
\begin{proof}We will restrict to the case where, for all $s\in[0,1]$, $q_{{}_H}(\widehat{\gamma}_s(t))=q_{{}_H}(\widehat{\gamma}_0(t))$ for all $t$ outside of a small interval $[t_1,t_2]$ for which $q_{{}_H}(\widehat{\gamma}([0,1]\times[t_1,t_2]))$ is contained in an open set $U\subseteq M$ such that, for some $\mathscr{g}'\in q_{{}_H}^{-1}(U)\cap\mathscr{e}\circ AB$, there is a geometric isomorphism \[\varphi:q_{{}_H}^{-1}(U)\cap\mathscr{e}\circ AB\to(\mathscr{g}'\circ A)_{q_{{}_H}^{-1}(U)}\times(\mathscr{g}'\circ B)_{q_{{}_H}^{-1}(U)},\] where $(\mathscr{g}'\circ A)_{q_{{}_H}^{-1}(U)}$ and $(\mathscr{g}'\circ B)_{q_{{}_H}^{-1}(U)}$ are the geometric leaves for $A$ and $B$ through $\mathscr{g}'$ taken in the restricted geometry $(q_{{}_H}^{-1}(U),\omega)$, as opposed to in $(\mathscr{G},\omega)$ where, a priori, $\mathscr{g}'\circ A$ might intersect $\mathscr{g}'\circ B$ at a point other than $\mathscr{g}'$. Note that, because $\Omega^\omega_\mathscr{g}(X\wedge Y)=0$ for all $\mathscr{g}\in\mathscr{e}\circ AB$ and $(\mathscr{G},\omega)$ is complete, if \[\mathscr{g}'=(\exp(\omega^{-1}(X_1))\cdots\exp(\omega^{-1}(X_\ell))\exp(\omega^{-1}(Y_1))\cdots\exp(\omega^{-1}(Y_{\ell'}))\mathscr{e})pk\] for $X_i\in\mathfrak{a}$, $Y_i\in\mathfrak{b}$, $p\in P$, and $k\in K$, then \[R_k\circ\exp(\omega^{-1}(Y_1))\cdots\exp(\omega^{-1}(Y_{\ell'}))\] gives a geometric isomorphism between $\mathscr{e}\circ A$ and $\mathscr{g}'\circ A$, so in particular $\mathscr{g}'\circ A$ is well-defined, and similarly for $\mathscr{g}'\circ B$. Moreover, as was noted above, the curvature condition guarantees that each $\mathscr{g}'\in\mathscr{e}\circ AB$ has an open set $U$ in $M$ such that $(q_{{}_H}^{-1}(U),\omega)$ splits as a product of type $(G,H)$ of the restricted geometric leaves if they are well-defined. Thus, since $[0,1]^2$ is compact, we may always piece together a homotopy of the type described in the lemma from finitely many of these homotopies where the projected curves on $M$ only differ in an open set $U$ of the desired type, so no generality is lost in making this restriction. This is essentially the same trick used in Lemma 6 of Section IV.6 in \cite{KobayashiNomizu}.

By assumption, for all $t\not\in[t_1,t_2]$, $\widehat{\gamma}_s(t)=\widehat{\gamma}_0(t)\sigma_s(t)$ for some $\sigma:[0,1]\times([0,t_1)\cup(t_2,1])\to PK$. For $t\in[0,t_1)$, this means $(\widehat{\gamma}_s)_G(t)=(\widehat{\gamma}_0)_G(t)\sigma_s(t)$, so $\pi_\mathscr{e}(\widehat{\gamma}_s)(t)=\pi_\mathscr{e}(\widehat{\gamma}_0)(t)\pi(\sigma_s(t))$.

Let $t_0,t_3\in[0,1]$ such that $t_0<t_1<t_2<t_3$ and $q_{{}_H}(\widehat{\gamma}([0,1]\times[t_0,t_3]))\subseteq U$. Denoting by $\mathrm{pr}_A$ and $\mathrm{pr}_B$ the natural projections from $(\mathscr{g}'\circ A)_{q_{{}_H}^{-1}(U)}\times(\mathscr{g}'\circ B)_{q_{{}_H}^{-1}(U)}$ to $(\mathscr{g}'\circ A)_{q_{{}_H}^{-1}(U)}$ and $(\mathscr{g}'\circ B)_{q_{{}_H}^{-1}(U)}$, respectively, and by $\widehat{\gamma}_s(\mathrm{id}+t_0)$ the path $\widehat{\gamma}_s(\mathrm{id}+t_0):[0,t_3-t_0]\to q_{{}_H}^{-1}(U)\cap\mathscr{e}\circ AB$ with $\widehat{\gamma}_s(\mathrm{id}+t_0)(t)=\widehat{\gamma}_s(t+t_0)$, we get \[\widehat{\gamma}_s(\mathrm{id}+t_0)_G=\mathrm{pr}_A(\varphi(\widehat{\gamma}_s(\mathrm{id}+t_0)))_G\mathrm{pr}_B(\varphi(\widehat{\gamma}_s(\mathrm{id}+t_0)))_G.\] By composing flows of vector fields $\omega^{-1}(Y)$ for $Y\in\mathfrak{b}$ and right-translations $R_k$ with $k\in K$, we can get a geometric isomorphism $\lambda:\mathscr{g}'\circ A\to\mathscr{e}\circ A$ such that $\lambda(\mathrm{pr}_A(\varphi(\widehat{\gamma}_0(t_0))))=\pi_\mathscr{e}(\widehat{\gamma}_0)(t_0)$. In particular, \begin{align*}\lambda(\mathrm{pr}_A(\varphi(\widehat{\gamma}_s(t_0)))) & =\lambda(\mathrm{pr}_A(\varphi(\widehat{\gamma}_0(t_0)\sigma_s(t_0))))=\lambda(\mathrm{pr}_A(\varphi(\widehat{\gamma}_0(t_0))))\pi(\sigma_s(t_0)) \\ & =\pi_\mathscr{e}(\widehat{\gamma}_0)(t_0)\pi(\sigma_s(t_0))=\pi_\mathscr{e}(\widehat{\gamma}_s)(t_0).\end{align*} Thus, $\pi_\mathscr{e}(\widehat{\gamma}_s)(t)=\lambda(\mathrm{pr}_A(\varphi(\widehat{\gamma}_s(t))))$ for all $t\in[t_0,t_3]$, since $\lambda(\mathrm{pr}_A(\varphi(\widehat{\gamma}_s(\mathrm{id}+t_0))))_G=\pi(\widehat{\gamma}_s(\mathrm{id}+t_0)_G)$. Moreover, since $t_3\in(t_2,1]$, we see that $\pi_\mathscr{e}(\widehat{\gamma}_s)(t_3)=\pi_\mathscr{e}(\widehat{\gamma}_0)(t_3)\pi(\sigma_s(t_3))$.

Finally, for $t\in(t_2,1]$, we still have $\widehat{\gamma}_s(t)=\widehat{\gamma}_0(t)\sigma_s(t)$, so we still get $\pi_\mathscr{e}(\widehat{\gamma}_s)(t)=\pi_\mathscr{e}(\widehat{\gamma}_0)(t)\pi(\sigma_s(t))$, hence \[\pi_\mathscr{e}(\widehat{\gamma}_s)(1)=\pi_\mathscr{e}(\widehat{\gamma}_0)(1)\pi(\sigma_s(1))=\pi_\mathscr{e}(\widehat{\gamma}_0)(1)\pi(h_s).\quad\mbox{\qedhere}\]\end{proof}

For our homotopy $\widehat{\gamma}$ from $\alpha$ to $\beta\zeta$, we have that $\pi_\mathscr{e}(\widehat{\gamma}_0)=\alpha$ and $\pi_\mathscr{e}(\widehat{\gamma}_1)=\mathscr{e}\pi(\zeta)$, so by the lemma, \[\mathscr{e}\pi(\zeta(1))=\alpha(1)\pi(pk^{-1}\zeta(1))=\alpha(1)p\pi(\zeta(1))=\mathscr{g}\pi(\zeta(1)),\] hence $\mathscr{g}=\mathscr{e}$.\mbox{\qedhere}\end{proof}

Corollary \ref{deRham} generalizes the de Rham decomposition for Riemannian manifolds. Unlike in the usual de Rham decomposition, we cannot always get a product decomposition of the geometry from a product decomposition of a subgroup containing the holonomy group when the geometry is complete and the base manifold is simply connected. Indeed, by identifying $\mathfrak{g}$ with $\mathbb{R}^{\dim(\mathfrak{g})}$, the Maurer-Cartan form on a Lie group $G$ gives a complete Cartan connection of type $(\mathbb{R}^{\dim(\mathfrak{g})},\{0\})$, but not all simply connected Lie groups decompose into a product of 1-dimensional submanifolds. Thus, in this more general case, we need to use the additional finesse of the curvature, rather than just the holonomy group.

We should remark that an alternative proof of Corollary \ref{deRham} can be obtained by using the generalized Cartan-Ambrose-Hicks theorem from \cite{BlumenthalHebda1989}. This would be similar to the approach from \cite{Wu}, except it would apply to this more general case.

For reductive\footnote{We say that a Cartan geometry $(\mathscr{G},\omega)$ of type $(G,H)$ is \emph{reductive} if and only if there exists an $\Ad_H$-invariant subspace $\mathfrak{m}\subseteq\mathfrak{g}$ such that $\mathfrak{g}\approx\mathfrak{m}\oplus\mathfrak{h}$ as an $\Ad_H$-module.} Cartan geometries, such as first-order $H$-structures with a choice of connection, completeness is not a particularly strong constraint. Indeed, it is well-known that reductive Cartan geometries are complete if and only if they are geodesically complete with respect to the induced covariant derivative. However, it should be noted that, in general, the completeness hypothesis of Corollary \ref{deRham} is an exceptionally fanciful assumption. Completeness remains rather mysterious for most Cartan geometries, but it is known in many cases to be quite restrictive. For example, it was proved in \cite{McKay} that every complete complex parabolic geometry is necessarily flat. On the other hand, we only needed to use that $\mathscr{e}\circ AB$ was complete in the proof of Corollary \ref{deRham}, and there are many Cartan geometries that have holonomy reductions to complete reductive geometries, so the result still applies quite generally.

Moreover, we can still get a local decomposition without completeness, and this can lead to interesting results---even for parabolic geometries---when paired with an appropriate holonomy reduction. For example, it is not too difficult to imagine how useful results like Theorem 4.5 of \cite{Armstrong} might be gleaned from Theorem \ref{productstructure} above.

\subsection{Inessential automorphism groups of parabolic geometries}
For this section, consider the following two trivial consequences of Corollary \ref{essential}.

\begin{proposition}\label{subset} Suppose $(\mathscr{G},\omega)$ is a Cartan geometry of type $(G,H)$ over $M$ with a holonomy reduction to $J$ based at $\mathscr{e}\in\mathscr{G}$ such that $q_{{}_H}(J\bullet\mathscr{e})=q_{{}_H}(\mathscr{G})=M$. Then, for arbitrary $\mathscr{g}\in\mathscr{G}$, the action of $K\leq\Aut(\mathscr{G},\omega)$ on $\mathscr{G}$ restricts to an action by automorphisms on $J\bullet\mathscr{g}$ if and only if $K\mathscr{e}=\{\varphi(\mathscr{e}):\varphi\in K\}\subseteq\mathscr{e}\bullet J$.\end{proposition}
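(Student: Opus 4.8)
The plan is to reduce the statement to Corollary \ref{essential} applied one automorphism at a time, since asking that the $K$-action restrict to $J\bullet\mathscr{g}$ is nothing more than asking that every individual $\varphi\in K$ restrict to an automorphism of $J\bullet\mathscr{g}$. First I would unpack the notion of ``restricting the action'': the action of $K$ on $\mathscr{G}$ restricts to an action on $J\bullet\mathscr{g}$ precisely when $\varphi(J\bullet\mathscr{g})=J\bullet\mathscr{g}$ for every $\varphi\in K$, together with the observation that such restrictions automatically assemble into a group action, since $(\varphi\psi)|_{J\bullet\mathscr{g}}=\varphi|_{J\bullet\mathscr{g}}\circ\psi|_{J\bullet\mathscr{g}}$ and $\mathrm{id}|_{J\bullet\mathscr{g}}=\mathrm{id}$, and each restriction is an automorphism of the induced geometry by the first part of Corollary \ref{essential}.

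Since $q_{{}_H}(J\bullet\mathscr{e})=q_{{}_H}(\mathscr{G})$ by hypothesis, Corollary \ref{essential} applies and tells us that, for each fixed $\varphi\in\Aut(\mathscr{G},\omega)$, the statement that $\varphi$ restricts to an automorphism of $J\bullet\mathscr{g}$ holds if and only if $\varphi(\mathscr{e})\in\mathscr{e}\bullet J$; crucially, this condition does not depend on the choice of $\mathscr{g}$. For the forward direction I would assume the action restricts, so that each $\varphi\in K$ satisfies $\varphi(J\bullet\mathscr{g})=J\bullet\mathscr{g}$ and hence $\varphi(\mathscr{e})\in\mathscr{e}\bullet J$ by Corollary \ref{essential}; ranging over all $\varphi\in K$ then gives $K\mathscr{e}\subseteq\mathscr{e}\bullet J$. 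For the converse I would assume $K\mathscr{e}\subseteq\mathscr{e}\bullet J$, so that each $\varphi\in K$ has $\varphi(\mathscr{e})\in\mathscr{e}\bullet J$ and therefore restricts to an automorphism of $J\bullet\mathscr{g}$, again by Corollary \ref{essential}; these restrictions then assemble into the desired $K$-action on $J\bullet\mathscr{g}$ by the functoriality of restriction noted above.

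There is essentially no analytic obstacle here, which is precisely why the proposition is labeled a trivial consequence. The only point requiring any care is the bookkeeping in the definition of ``restricts to an action'': I would want to confirm that a family of maps that are individually automorphisms of $J\bullet\mathscr{g}$ genuinely constitutes a group action of $K$, but this is immediate because restriction is compatible with composition and identities. All of the real content has already been absorbed into Corollary \ref{essential} and the theorem preceding it, so the proof amounts to quantifying that corollary over the elements of $K$.
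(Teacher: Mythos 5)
Your proposal is correct and matches the paper's intended argument: the paper states this proposition as a ``trivial consequence of Corollary \ref{essential}'' with no further proof, and your element-by-element application of that corollary, together with the observation that the criterion $\varphi(\mathscr{e})\in\mathscr{e}\bullet J$ is independent of $\mathscr{g}$, is exactly the reasoning being invoked.
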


\begin{proposition}\label{fixedpoint} Suppose $(\mathscr{G},\omega)$ is a Cartan geometry of type $(G,H)$ over $M$ with a holonomy reduction to $J$ based at $\mathscr{e}\in\mathscr{G}$ such that $q_{{}_H}(J\bullet\mathscr{e})=q_{{}_H}(\mathscr{G})=M$. Then, if $\xi$ is an infinitesimal automorphism of $(\mathscr{G},\omega)$ such that $q_{{}_H*}(\xi)$ vanishes at some $x\in M$, then $\xi$ restricts to an infinitesimal automorphism of $J\bullet\mathscr{g}$ for some $\mathscr{g}\in q_{{}_H}^{-1}(x)$ if and only if $\omega_{\mathscr{g}}(\xi)\in\Ad_{\mathscr{g}^{-1}}(\mathfrak{j})\cap\mathfrak{h}$.\end{proposition}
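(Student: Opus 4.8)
The plan is to reduce the whole statement to a single one-parameter-subgroup computation and then invoke Corollary \ref{essential}. First, observe that the hypothesis that $q_{{}_H*}(\xi)$ vanishes at $x$ says precisely that $\xi_\mathscr{g}$ is vertical for every $\mathscr{g}\in q_{{}_H}^{-1}(x)$, i.e. $\omega_\mathscr{g}(\xi)\in\mathfrak{h}$; so the factor $\cap\,\mathfrak{h}$ in the claimed condition holds automatically, and it suffices to prove that $\xi$ restricts to an infinitesimal automorphism of $J\bullet\mathscr{g}$ if and only if $\omega_\mathscr{g}(\xi)\in\Ad_{\mathscr{g}^{-1}}(\mathfrak{j})$. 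Fixing a development $g\in G$ of $\mathscr{g}$ from $\mathscr{e}$, I would recall that $J\bullet\mathscr{g}=\mathscr{g}\bullet(g^{-1}Jg)$, whose defining subgroup $g^{-1}Jg$ is closed and contains $\Hol_\mathscr{g}(\mathscr{G},\omega)=g^{-1}\Hol_\mathscr{e}(\mathscr{G},\omega)g$.

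The main step is to develop the integral curve of $\xi$ through $\mathscr{g}$. Since $\xi$ is an infinitesimal automorphism it is complete, so its flow $\varphi_t=\exp(t\xi)$ is defined for all $t$ and consists of automorphisms; because $\varphi_t^*\omega=\omega$ and $\varphi_{t*}\xi=\xi$, the value $\omega(\xi)$ is constant along each integral curve, so $\omega_{\varphi_s(\mathscr{g})}(\xi)=\omega_\mathscr{g}(\xi)=:u$ for all $s$. Developing the path $s\mapsto\varphi_s(\mathscr{g})$ from $\mathscr{g}$ then has logarithmic derivative equal to the constant $u$, and hence equals the one-parameter subgroup $t\mapsto\exp(tu)$. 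Because $g^{-1}Jg$ contains $\Hol_\mathscr{g}(\mathscr{G},\omega)$, a point lies in $\mathscr{g}\bullet(g^{-1}Jg)$ exactly when one (equivalently every) development of it from $\mathscr{g}$ lies in $g^{-1}Jg$; thus $\varphi_t(\mathscr{g})\in J\bullet\mathscr{g}$ if and only if $\exp(tu)\in g^{-1}Jg$. Finally, closedness of $g^{-1}Jg$ gives $\exp(tu)\in g^{-1}Jg$ for all $t$ if and only if $u\in\Ad_{g^{-1}}(\mathfrak{j})$, the Lie algebra of $g^{-1}Jg$.

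It remains to match ``$\xi$ restricts to an infinitesimal automorphism of $J\bullet\mathscr{g}$'' with ``$\varphi_t$ preserves $J\bullet\mathscr{g}$ setwise for all $t$'', and this is where Corollary \ref{essential} (with the theorem preceding it) enters. Writing $a(t)$ for a development of $\varphi_t(\mathscr{e})$ from $\mathscr{e}$, the condition $\varphi_t(\mathscr{g})\in J\bullet\mathscr{g}$ is equivalent to $a(t)\in J$, which is the same as $\varphi_t(\mathscr{e})\in\mathscr{e}\bullet J$, and hence by Corollary \ref{essential} to $\varphi_t$ restricting to an automorphism of $J\bullet\mathscr{g}$. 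Therefore $\xi$ restricts to an infinitesimal automorphism of $J\bullet\mathscr{g}$ precisely when $\varphi_t(\mathscr{g})\in J\bullet\mathscr{g}$ for all $t$, i.e. precisely when $u=\omega_\mathscr{g}(\xi)\in\Ad_{\mathscr{g}^{-1}}(\mathfrak{j})$, settling both directions simultaneously: the forward direction differentiates at $t=0$, and the converse reassembles the automorphisms $\varphi_t|_{J\bullet\mathscr{g}}$ into the flow of $\xi|_{J\bullet\mathscr{g}}$.

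The part I expect to require the most care is the bookkeeping around non-uniqueness of developments: a development of a point is only determined modulo the holonomy group, so the clean equivalence ``point in the curved coset $\iff$ its distinguished development lies in the subgroup'' must be justified using $\Hol_\mathscr{g}(\mathscr{G},\omega)\leq g^{-1}Jg$, and likewise one should check that the condition $\omega_\mathscr{g}(\xi)\in\Ad_{\mathscr{g}^{-1}}(\mathfrak{j})$ is unchanged under $\mathscr{g}\mapsto\mathscr{g}h$ (using $\omega_{\mathscr{g}h}(\xi)=\Ad_{h^{-1}}\omega_\mathscr{g}(\xi)$ and $\Ad_{(\mathscr{g}h)^{-1}}(\mathfrak{j})=\Ad_{h^{-1}}\Ad_{\mathscr{g}^{-1}}(\mathfrak{j})$), so that the ``for some $\mathscr{g}\in q_{{}_H}^{-1}(x)$'' in the statement is unambiguous. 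Everything else is a direct consequence of the completeness of $\xi$, the constancy of $\omega(\xi)$ along its flow, and Corollary \ref{essential}.
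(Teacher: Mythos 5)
Your argument is correct and follows the route the paper intends: the paper offers no explicit proof, presenting the proposition as one of two ``trivial consequences'' of Corollary \ref{essential}, and your reduction---constancy of $\omega(\xi)$ along its own flow, development of the integral curve through $\mathscr{g}$ to the one-parameter subgroup $t\mapsto\exp(tu)$, and the identification of flow-invariance of $J\bullet\mathscr{g}$ with $\varphi_t(\mathscr{e})\in\mathscr{e}\bullet J$ via Corollary \ref{essential}---is exactly the computation that makes that assertion precise. The bookkeeping you flag (developments being determined only modulo $\Hol_\mathscr{e}(\mathscr{G},\omega)\leq J$, and invariance of the condition under $\mathscr{g}\mapsto\mathscr{g}h$) is handled correctly, so there is nothing to add.
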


Since $q_{{}_H}(J\bullet\mathscr{e})=q_{{}_H}(\mathscr{G})$ in the above proposition, every $\mathscr{g}\in\mathscr{G}$ has a development of the form $g=jh$ for some $j\in J$ and $h\in H$, in which case we get $\Ad_{g^{-1}}(\mathfrak{j})\cap\mathfrak{h}=\Ad_{(jh)^{-1}}(\mathfrak{j})\cap\mathfrak{h}=\Ad_{h^{-1}}(\mathfrak{j}\cap\mathfrak{h})$. In particular, we can always choose $\mathscr{g}\in J\bullet\mathscr{e}$ and $\omega_\mathscr{g}(\xi)\in\mathfrak{j}\cap\mathfrak{h}$ in the situation of Proposition \ref{fixedpoint}.

Both of these propositions allow us to use a relatively small amount of information to determine global properties as long as we know the holonomy group. In the first case, it tells us that a subgroup of automorphisms of a geometric structure restricts to a subgroup of automorphisms of an underlying structure if and only if the subgroup's orbit from the chosen basepoint for the holonomy is contained in the curved coset corresponding to the holonomy reduction for the underlying structure. In the second case, we can say whether an infinitesimal automorphism of a geometric structure with a fixed point on the base manifold restricts to an automorphism of an underlying structure by just checking its value at a single point in the fiber above the fixed point.

These results would be particularly useful for exploring inessential automorphisms of parabolic geometries, which are automorphisms of an infinitesimal flag structure that restrict to automorphisms of a particular underlying geometry. Indeed, a local analogue of Proposition \ref{fixedpoint}, not relying on holonomy, has appeared before in \cite{Alt}, generalizing results from \cite{Frances2}, for precisely this purpose. However, there are obstructions that may prevent the above propositions from giving us global information about the underlying infinitesimal flag structures, which are often the focus of research for parabolic geometries. We will discuss these below. For an introduction to parabolic geometries, we highly recommend \cite{CapSlovak}; for brevity, we will assume some familiarity with the subject for the rest of this section.

Given a model parabolic geometry $(G,P)$, we get an associated filtration $\mathfrak{g}=\mathfrak{g}^{-k}\supset\cdots\supset\mathfrak{g}^0=\mathfrak{p}\supset\cdots\supset\mathfrak{g}^k\supset\{0\}$ and grading $\mathfrak{g}=\mathfrak{g}_{-k}+\cdots+\mathfrak{g}_0+\cdots+\mathfrak{g}_k$. We will denote by $\mathfrak{p}_+$ the nilradical $\mathfrak{g}_1+\cdots+\mathfrak{g}_k$ of $\mathfrak{p}$ and by $\mathfrak{g}_-$ the nilradical $\mathfrak{g}_{-k}+\cdots+\mathfrak{g}_{-1}$ of the opposite parabolic of $\mathfrak{p}$. We also get closed subgroups $G_0$, $G_-$, and $P_+$ with corresponding Lie subalgebras $\mathfrak{g}_0$, $\mathfrak{g}_-$, and $\mathfrak{p}_+$, respectively, and $P\simeq G_0\ltimes P_+$.

The subgroup $G_0$ is reductive, so it splits as $\mathfrak{z}(\mathfrak{g}_0)\oplus\mathfrak{g}_0^\text{ss}$, where $\mathfrak{g}_0^\text{ss}$ is semisimple. If $\kgf$ is the Killing form on $\mathfrak{g}$, then the restriction of $\kgf$ to $\mathfrak{g}_0$ will still be nondegenerate and respect this splitting. Thus, given a homomorphism $\lambda:G_0\to\mathbb{R}_+$, there will be a unique element $\lambda_*^\kgf\in\mathfrak{z}(\mathfrak{g}_0)$ such that the homomorphism $\lambda_*:\mathfrak{g}_0\to\mathbb{R}$ satisfies $\lambda_*(Z)=\kgf(\lambda_*^\kgf,Z)$ for all $Z\in\mathfrak{g}_0$. Following \cite{CapSlovak2}, we call $\lambda_*^\kgf$ a \emph{scaling element} if and only if the restriction of $\ad_{\lambda_*^\kgf}$ to each $G_0$-irreducible component of $\mathfrak{p}_+$ is given by multiplication by a nonzero real scalar. We imagine that this was the intended definition in \cite{Alt}, since all of the results from that paper seem to work otherwise verbatim with this definition, but as written it was assumed that $\lambda_*^\kgf$ acts by scalar multiplication on each grading component $\mathfrak{g}_i$, in which case it is not hard to see that $\lambda_*^\kgf$ must be a scalar multiple of the grading element.

Recall that a parabolic geometry of type $(G,P)$ naturally induces an infinitesimal flag structure of type $(G,P)$, and that we can identify this infinitesimal flag structure with the equivalence class of parabolic geometries inducing it.

\begin{definition}Given an infinitesimal flag structure of type $(G,P)$ and a homomorphism $\lambda:G_0\to\mathbb{R}_+$ with $\lambda_*^\kgf$ a scaling element, we say a \emph{subordinate $\lambda$-exact geometry} is a choice of holonomy reduction to $G_-\ker(\lambda)$ on some $(\mathscr{G},\omega)$ of type $(G,P)$ inducing that infinitesimal flag structure such that $q_{{}_P}(G_-\ker(\lambda)\bullet\mathscr{e})=q_{{}_P}(\mathscr{G})$.\end{definition}

\begin{lemma}\label{exactWeyl} Suppose $(\mathscr{G},\omega)$ is a Cartan geometry of type $(G,P)$ over $M$ inducing a given infinitesimal flag structure and $\lambda:G_0\to\mathbb{R}_+$ is a homomorphism with $\lambda_*^\kgf$ a scaling element. If $\mathscr{e}\in\mathscr{G}$ is such that $\Hol_\mathscr{e}(\mathscr{G},\omega)\leq G_-\ker(\lambda)$ and $q_{{}_P}(G_-\ker(\lambda)\bullet\mathscr{e})=M$, then there is an exact (with respect to $\lambda$) Weyl structure $\sigma:\mathscr{G}/P_+\to\mathscr{G}$ induced by the subordinate $\lambda$-exact geometry $G_-\ker(\lambda)\bullet\mathscr{e}$. Conversely, every exact Weyl structure for $(\mathscr{G},\omega)$ induces a subordinate $\lambda$-exact geometry on some Cartan geometry $(\mathscr{G},\omega')$ inducing the same infinitesimal flag structure as $(\mathscr{G},\omega)$.\end{lemma}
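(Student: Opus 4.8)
The plan is to treat the two directions as near-inverses of one another, with the forward direction being essentially structural and the converse requiring us to adjust the Cartan connection within its infinitesimal flag class. Throughout, write $\mathscr{G}_0:=\mathscr{G}/P_+$ for the underlying $G_0$-bundle, let $\pi_+:\mathscr{G}\to\mathscr{G}_0$ be the projection, and recall from \cite{CapSlovak} that a Weyl structure is a $G_0$-equivariant section $\sigma:\mathscr{G}_0\to\mathscr{G}$, that it induces a Weyl connection $\gamma=\sigma^*\omega_0$ (the $\mathfrak{g}_0$-part of $\sigma^*\omega$), and that $\sigma$ is exact with respect to $\lambda$ precisely when the connection $\lambda_*\circ\gamma$ induced on the bundle of scales $L^\lambda=\mathscr{G}_0\times_{G_0}\mathbb{R}_+$ admits a parallel section; equivalently, exact Weyl structures correspond bijectively to global scales, that is, to $\ker(\lambda)$-reductions of $\mathscr{G}_0$ that are parallel for $\lambda_*\circ\gamma$, the reduction being the parallel scale.

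The key observation, used in both directions, is that $G_-\ker(\lambda)\cap P=\ker(\lambda)$, since $G_-\cap P=\{e\}$ and $\ker(\lambda)\leq G_0\leq P$; consequently $\mathfrak{j}:=\mathrm{Lie}(G_-\ker(\lambda))=\mathfrak{g}_-\oplus\ker(\lambda_*)$, where $\ker(\lambda_*)\subseteq\mathfrak{g}_0$ contains $\mathfrak{g}_0^{\mathrm{ss}}$ and is exactly the annihilator of the $\lambda$-direction in $\mathfrak{g}_0$. For the forward direction, I would set $C:=G_-\ker(\lambda)\bullet\mathscr{e}$. By the results of Section \ref{curvedcosets}, $C$ is an embedded $\ker(\lambda)$-subbundle of $\mathscr{G}$ over $M$ on which $\omega$ restricts to a $\mathfrak{j}$-valued Cartan connection of type $(G_-\ker(\lambda),\ker(\lambda))$. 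Since $\ker(\lambda)\leq G_0$, the $G_0$-saturation $C\cdot G_0$ is a $G_0$-reduction of $\mathscr{G}$, and the fact that $\pi_+|_C$ is injective (an identity $\mathscr{g}k=\mathscr{g}p_+$ with $k\in\ker(\lambda)$, $p_+\in P_+$ forces $k=p_+=e$ because $G_0\cap P_+=\{e\}$) shows that $C\cdot G_0$ is the image of a genuine Weyl structure $\sigma:\mathscr{G}_0\to\mathscr{G}$, with $\sigma(\pi_+(C))=C$. To see that $\sigma$ is exact I would take the scale to be $s:=\pi_+(C)$; along $s$ one has $\sigma^*\omega|_s=\omega|_C$, which is $\mathfrak{j}$-valued, so its $\mathfrak{g}_0$-part lands in $\ker(\lambda_*)$ and $\lambda_*\circ\gamma$ vanishes on $s$. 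Thus $s$ is parallel, $\sigma$ is exact, and it is manifestly induced by the subordinate $\lambda$-exact geometry $C$.

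For the converse, start with an exact Weyl structure $\sigma$ and its parallel scale $s$, a $\ker(\lambda)$-reduction of $\mathscr{G}_0$. Decompose $\sigma^*\omega=\theta+\gamma+\mathsf{P}$ into its $\mathfrak{g}_-$, $\mathfrak{g}_0$, and $\mathfrak{p}_+$ parts (soldering form, Weyl connection, Rho tensor). Exactness gives that $\gamma|_s$ is $\ker(\lambda_*)$-valued, and $\theta$ is always $\mathfrak{g}_-$-valued, so the only obstruction to $\sigma^*\omega|_s$ being $\mathfrak{j}$-valued is the Rho tensor, since $\mathfrak{p}_+\cap\mathfrak{j}=\{0\}$. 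I would therefore replace $\omega$ by $\omega':=\omega+\Psi$, where $\Psi$ is the horizontal, $P$-equivariant, $\mathfrak{p}_+$-valued $1$-form determined by $\sigma^*\Psi=-\mathsf{P}$; adding such a $\Psi$ keeps $\omega'$ a Cartan connection inducing the \emph{same} infinitesimal flag structure (only the $\mathfrak{p}_+$-part, hence only the Rho tensor, is altered), so $\sigma$ remains an exact Weyl structure for $(\mathscr{G},\omega')$ with the same scale $s$, now with $\sigma^*\omega'|_s=\theta+\gamma$ being $\mathfrak{j}$-valued. Choosing any $\mathscr{e}\in\sigma(s)$, the submanifold $\sigma(s)$ is then an integral manifold of the distribution $(\omega')^{-1}(\mathfrak{j})$ which is a $\ker(\lambda)$-reduction of $\mathscr{G}$ projecting onto all of $M$; developments of paths in $\sigma(s)$ land in $G_-\ker(\lambda)$, yielding $\Hol_\mathscr{e}(\mathscr{G},\omega')\leq G_-\ker(\lambda)$ and $\sigma(s)\subseteq G_-\ker(\lambda)\bullet\mathscr{e}$, and comparing fibers (both are single $\ker(\lambda)$-orbits over each point of the connected base $M$) forces $\sigma(s)=G_-\ker(\lambda)\bullet\mathscr{e}$, which is the subordinate $\lambda$-exact geometry induced by $\sigma$.

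The main obstacle is the converse, and within it the construction and control of $\omega'$: one must verify carefully, using the Weyl-structure and Rho-tensor formalism of \cite{CapSlovak} and the scaling-element discussion following \cite{CapSlovak2}, that prescribing $\sigma^*\Psi=-\mathsf{P}$ yields a globally well-defined Cartan connection in the same infinitesimal flag class, and that the resulting $\sigma(s)$ is not merely an integral leaf of $(\omega')^{-1}(\mathfrak{j})$ but the entire curved coset $G_-\ker(\lambda)\bullet\mathscr{e}$ with the required holonomy containment. The forward direction, by contrast, reduces to the single computation that $\omega|_C$ is $\mathfrak{j}$-valued, which immediately pins down the scale and its parallelism.
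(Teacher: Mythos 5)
Your proposal is correct and follows essentially the same route as the paper: the forward direction builds the Weyl structure by equivariantly extending the curved coset $G_-\ker(\lambda)\bullet\mathscr{e}$ and reads off exactness from the fact that $\omega$ restricted to the coset is $(\mathfrak{g}_-\oplus\ker(\lambda_*))$-valued, and the converse replaces $\omega$ by the connection $\omega'$ with $\sigma^*\omega'=\theta+\gamma$ and then develops paths lifted into the parallel scale. The only (cosmetic) difference is that you obtain $\omega'$ by adding a horizontal $\mathfrak{p}_+$-valued form killing the Rho tensor, whereas the paper obtains the same $\omega'$ via the extension functor applied to $(\mathscr{G}/P_+,\upsilon)$; your explicit verification that $\sigma(s)$ equals the full curved coset is a welcome detail the paper leaves implicit.
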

\begin{proof}For a Weyl structure $\sigma:\mathscr{G}/P_+\to\mathscr{G}$, recall that $\sigma^*\omega$ is a $G_0$-equivariant $\mathfrak{g}$-valued 1-form on $\mathscr{G}/P_+$, so we may define $G_0$-equivariant $\mathfrak{g}_i$-valued 1-forms $\sigma^*\omega_i$ for each graded component $\mathfrak{g}_i$. From this, we get a Cartan geometry $(\mathscr{G}/P_+,\upsilon)$ of type $(G_-G_0,G_0)$ over $M$, where $\upsilon=\sigma^*\omega_{-k}+\cdots+\sigma^*\omega_0$. Using the extension functor from $(G_-G_0,G_0)$ to $(G,P)$ given by inclusion maps, we also get a Cartan geometry $(\mathscr{G},\omega')$ of type $(G,P)$ from $(\mathscr{G}/P_+,\upsilon)$, where $\omega'$ is uniquely characterized by $\sigma^*\omega'=\upsilon$. Moreover, $\omega-\omega'$ takes $\omega^{-1}(\mathfrak{g}^i)$ into $\mathfrak{p}$ for each filtration component $\mathfrak{g}^i$, and it takes $\omega^{-1}(\mathfrak{p})$ to $\{0\}$, so in particular $\omega'$ induces the same infinitesimal flag structure as $\omega$.

Given a subordinate $\lambda$-exact geometry $G_-\ker(\lambda)\bullet\mathscr{e}$ in $(\mathscr{G},\omega)$, we get a corresponding Weyl structure $\sigma:\mathscr{G}/P_+\to\mathscr{G}$ by defining $\sigma(\mathscr{g}P_+)=\mathscr{g}$ for $\mathscr{g}\in G_-\ker(\lambda)\bullet\mathscr{e}$ and extending by $G_0$-equivariance. In that case, for $X\in\mathfrak{g}_-+\ker(\lambda_*)$, we see that $\lambda_*(\sigma^*\omega_0(\upsilon^{-1}(X+t\lambda_*^\kgf)))=t\lambda_*(\lambda_*^\kgf)$, so since $\lambda_*(\sigma^*\omega_0)$ descends to the induced principal $\mathbb{R}_+$-connection on $\mathscr{G}/P_+\times_\lambda\mathbb{R}_+$ and every element of $\mathscr{G}/P_+$ is uniquely of the form $\sigma^{-1}(\mathscr{g})\exp(t\lambda_*^\kgf)$ for some $\mathscr{g}\in G_-\ker(\lambda)\bullet\mathscr{e}$ and $t\in\mathbb{R}$, the principal $\mathbb{R}_+$-connection on $\mathscr{G}/P_+\times_\lambda\mathbb{R}_+$ corresponds to the one induced by the global section $\mathscr{g}P\mapsto(\mathscr{g}P_+,1)$ for $\mathscr{g}\in G_-\ker(\lambda)\bullet\mathscr{e}$, hence $\sigma$ is exact.

Conversely, suppose $\sigma:\mathscr{G}/P_+\to\mathscr{G}$ is an exact Weyl structure for $(\mathscr{G},\omega)$. Consider a global section $\sigma_0:M\to\mathscr{G}/\ker(\lambda)P_+\cong\mathscr{G}/P_+\times_\lambda\mathbb{R}_+$ that induces the same principal $\mathbb{R}_+$-connection as the one induced by $\sigma^*\omega_0$. Given a loop $\gamma:[0,1]\to M$, let $\widehat{\gamma}$ be a lift of $\sigma_0(\gamma)$ to $\mathscr{G}/P_+$. Then, $\sigma(\widehat{\gamma})$ is a lift of $\gamma$ to $\mathscr{G}$, and $\sigma(\gamma(1))\in\sigma(\gamma(0))\ker(\lambda)$, so since $\sigma(\widehat{\gamma})^*\omega'=\widehat{\gamma}^*\sigma^*\omega'=\widehat{\gamma}^*\upsilon$, and $\widehat{\gamma}^*\upsilon$ takes values in $\mathfrak{g}_-+\ker(\lambda_*)$ by construction, the holonomy based at $\mathscr{e}=\sigma(\widehat{\gamma}(0))$ of the loop $\gamma$ is contained in $G_-\ker(\lambda)$. Similarly, if $\gamma:[0,1]\to M$ is a path between two points of $M$ with $\gamma(0)=q_{{}_P}(\mathscr{e})$, then we can lift it to a path $\sigma(\widehat{\gamma})$ with $\sigma(\widehat{\gamma}(0))=\mathscr{e}$ and $\sigma(\widehat{\gamma})^*\omega'$ taking values in $\mathfrak{g}_-+\ker(\lambda_*)$, so $q_{{}_P}(\mathscr{e}\bullet G_-\ker(\lambda))=M$.\mbox{\qedhere}\end{proof}

Before defining inessential and essential automorphism groups in this setting, we need to remark upon the first of two complications: while automorphisms of a parabolic geometry necessarily induce automorphisms of the underlying infinitesimal flag structure, automorphisms of the infinitesimal flag structure do not necessarily lift to automorphisms of an arbitrary overlying parabolic geometry. Thus, the automorphisms of two parabolic geometries might be completely different even if they induce the same infinitesimal flag structure. Thankfully, if we put certain mild constraints on the curvature, namely regularity and normality, then modulo some usually negligible Lie algebra cohomological restrictions on the model geometry (see Theorem 3.1.14 in \cite{CapSlovak}), this is not an issue: automorphisms of a regular infinitesimal flag structure lift to automorphisms of the (unique) overlying normal regular parabolic geometry. Thus, in a loose sense, normality is a ``maximal symmetry'' condition on regular parabolic geometries overlying some infinitesimal flag structure.

\begin{definition}\label{defessential} Suppose $(\mathscr{G},\omega^\text{norm})$ is a regular normal parabolic geometry of type $(G,P)$ and $\lambda:G_0\to\mathbb{R}_+$ is a homomorphism with $\lambda_*^\kgf$ a scaling element. A subgroup $K\leq\Aut(\mathscr{G},\omega^\text{norm})$ is said to be \emph{$\lambda$-inessential} if and only if the action restricts to an action by automorphisms on some subordinate $\lambda$-exact geometry for the infinitesimal flag structure induced by $(\mathscr{G},\omega^\text{norm})$. Otherwise, we say that $K$ is \emph{$\lambda$-essential}. We say that $K$ is \emph{essential} if and only if it is $\lambda$-essential for each $\lambda:G_0\to\mathbb{R}_+$ with $\lambda_*^\kgf$ a scaling element.

An automorphism $\varphi\in\Aut(\mathscr{G},\omega^\text{norm})$ is called $\lambda$-essential or $\lambda$-inessential according to whether the subgroup generated by $\varphi$ is $\lambda$-essential or $\lambda$-inessential. 

Similarly, subalgebras of $\mathfrak{aut}(\mathscr{G},\omega^\text{norm})$ are called $\lambda$-essential or $\lambda$-inessential according to whether the subgroups generated by them are $\lambda$-essential or $\lambda$-inessential, and an element of $\mathfrak{aut}(\mathscr{G},\omega^\text{norm})$ is said to be $\lambda$-essential or $\lambda$-inessential according to whether the subalgebra generated by it is $\lambda$-essential or $\lambda$-inessential.\end{definition}

In light of Lemma \ref{exactWeyl}, we see that these definitions of essential and inessential are equivalent to the ones in \cite{Alt}, modulo the extended choice of scaling elements from \cite{CapSlovak2}.

Unfortunately, there is a second complication that prevents us from directly applying Propositions \ref{subset} and \ref{fixedpoint}: just like with automorphism groups, the holonomy groups of different parabolic geometries inducing the same infinitesimal flag structure are not required to be the same, and as far as the author is currently aware, there is no known method of using the holonomy groups of regular normal parabolic geometries to determine the holonomy groups of other parabolic geometries inducing the same infinitesimal flag structure. Thus, unlike in the case of automorphism groups, we cannot just use the holonomy group of the overlying regular normal parabolic geometry. Moreover, even if we get a holonomy reduction to $G_-\ker(\lambda)$ at $\mathscr{e}$, $q_{{}_P}(G_-\ker(\lambda)\bullet\mathscr{e})$ is not necessarily the whole base manifold. For most cases, this makes our definitions significantly less graceful to use than the ones from \cite{Alt}.

In lieu of elegance, a clumsy global analogue of Proposition \ref{fixedpoint} for infinitesimal flag structures would be along the lines of the following.

\begin{proposition}\label{flagaut} Suppose $(\mathscr{G},\omega^\text{norm})$ is a regular normal parabolic geometry of type $(G,P)$ over $M$, $\lambda:G_0\to\mathbb{R}_+$ is a homomorphism with $\lambda_*^\kgf$ a scaling element, and $\xi\in\mathfrak{aut}(\mathscr{G},\omega^\text{norm})$ such that $q_{{}_P*}(\xi)$ vanishes at $x\in M$. Then, $\xi$ is $\lambda$-inessential if and only if there exists an $\mathscr{e}\in q_{{}_P}^{-1}(x)$ such that $\omega^\text{norm}_\mathscr{e}(\xi)\in\ker(\lambda_*)$ and a $(\mathscr{G},\omega)$ inducing the same infinitesimal flag structure as $(\mathscr{G},\omega^\text{norm})$ on which there is a subordinate $\lambda$-exact geometry based at $\mathscr{e}$ and such that $\xi$ is also an infinitesimal automorphism of $(\mathscr{G},\omega)$.\end{proposition}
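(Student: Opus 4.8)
The plan is to reduce both implications to Proposition~\ref{fixedpoint}, applied to the closed subgroup $J := G_-\ker(\lambda)$ with Lie algebra $\mathfrak{j}=\mathfrak{g}_-+\ker(\lambda_*)$. Before starting, I would record two elementary facts that carry most of the weight. First, since $q_{{}_P*}(\xi)$ vanishes at $x$, the vector $\xi_\mathscr{e}$ is vertical at every $\mathscr{e}\in q_{{}_P}^{-1}(x)$, and every Cartan connection of type $(G,P)$ sends a vertical vector to the unique $\mathfrak{p}$-element generating the corresponding fundamental vector field; consequently $\omega_\mathscr{e}(\xi)=\omega^\text{norm}_\mathscr{e}(\xi)$ for every $\omega$ on $\mathscr{G}$ inducing the same infinitesimal flag structure. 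Second, from $\mathfrak{g}=\mathfrak{g}_-\oplus\mathfrak{g}_0\oplus\mathfrak{p}_+$ and $\ker(\lambda_*)\subseteq\mathfrak{g}_0\subseteq\mathfrak{p}$ one reads off $\mathfrak{j}\cap\mathfrak{p}=\ker(\lambda_*)$, and since $\mathfrak{j}$ is $\Ad_J$-invariant we get $\Ad_{g^{-1}}(\mathfrak{j})\cap\mathfrak{p}=\ker(\lambda_*)$ for every $g\in J$.

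For the backward implication I would take the given data---$\mathscr{e}\in q_{{}_P}^{-1}(x)$ with $\omega^\text{norm}_\mathscr{e}(\xi)\in\ker(\lambda_*)$, a geometry $(\mathscr{G},\omega)$ carrying a subordinate $\lambda$-exact geometry $J\bullet\mathscr{e}$, and $\xi\in\mathfrak{aut}(\mathscr{G},\omega)$---and simply verify the hypothesis of Proposition~\ref{fixedpoint}. By the first fact, $\omega_\mathscr{e}(\xi)=\omega^\text{norm}_\mathscr{e}(\xi)\in\ker(\lambda_*)=\mathfrak{j}\cap\mathfrak{p}=\Ad_{\mathscr{e}^{-1}}(\mathfrak{j})\cap\mathfrak{p}$ (the last step because $\mathscr{e}$ develops to $e$ from itself), and the subordinate $\lambda$-exact geometry supplies precisely $\Hol_\mathscr{e}(\mathscr{G},\omega)\leq J$ and $q_{{}_P}(J\bullet\mathscr{e})=M$. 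Proposition~\ref{fixedpoint} then says $\xi$ restricts to an infinitesimal automorphism of $J\bullet\mathscr{e}$, so its flow acts by automorphisms on this subordinate $\lambda$-exact geometry and $\xi$ is $\lambda$-inessential.

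The forward implication is where I would spend the real effort. Unwinding Definition~\ref{defessential}, $\lambda$-inessentiality of $\xi$ means the one-parameter group $K=\{\exp(t\xi)\}$ restricts to an action by automorphisms of some subordinate $\lambda$-exact geometry $J\bullet\mathscr{e}'$ inside a geometry $(\mathscr{G},\omega)$ inducing the same infinitesimal flag structure. The first thing to extract is that $\xi\in\mathfrak{aut}(\mathscr{G},\omega)$: by Corollary~\ref{essential}, each $\exp(t\xi)$, being $P$-equivariant and restricting to an automorphism of the curved-coset geometry, must equal the $P$-equivariant extension of that restriction and hence be an automorphism of $(\mathscr{G},\omega)$. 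Next I would pick $\mathscr{e}\in q_{{}_P}^{-1}(x)\cap J\bullet\mathscr{e}'$, which exists because $q_{{}_P}(J\bullet\mathscr{e}')=M$; then $J\bullet\mathscr{e}=J\bullet\mathscr{e}'$ and the development $g$ of $\mathscr{e}$ from $\mathscr{e}'$ lies in $J$. Since $K$ preserves $J\bullet\mathscr{e}'$, the field $\xi$ restricts to an infinitesimal automorphism of $J\bullet\mathscr{e}$, so the ``only if'' half of Proposition~\ref{fixedpoint} together with the second fact and $g\in J$ forces $\omega_\mathscr{e}(\xi)\in\Ad_{g^{-1}}(\mathfrak{j})\cap\mathfrak{p}=\ker(\lambda_*)$; the first fact then upgrades this to $\omega^\text{norm}_\mathscr{e}(\xi)\in\ker(\lambda_*)$. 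Finally $J\bullet\mathscr{e}=J\bullet\mathscr{e}'$ is a subordinate $\lambda$-exact geometry based at $\mathscr{e}$ (its holonomy $\Hol_\mathscr{e}(\mathscr{G},\omega)=g^{-1}\Hol_{\mathscr{e}'}(\mathscr{G},\omega)g\leq J$ and $q_{{}_P}(J\bullet\mathscr{e})=M$), and $\xi\in\mathfrak{aut}(\mathscr{G},\omega)$, which is exactly the data required on the right-hand side.

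I expect the main obstacle to be the definitional bookkeeping in the forward implication rather than any computation. The delicate point is arguing that a subgroup of $\Aut(\mathscr{G},\omega^\text{norm})$ restricting to the subordinate $\lambda$-exact geometry is genuinely acting through automorphisms of the possibly non-normal geometry $(\mathscr{G},\omega)$; this relies on Corollary~\ref{essential} to identify automorphisms of a curved coset with automorphisms of the ambient $(\mathscr{G},\omega)$ that preserve it, and implicitly on the fact that a regular normal parabolic geometry realizes every automorphism of its infinitesimal flag structure, so that the two connections live on a common bundle with comparable automorphisms. A secondary nuisance is that the basepoint $\mathscr{e}'$ produced by $\lambda$-inessentiality need not lie over $x$, which is why I would slide along the curved coset to a point $\mathscr{e}$ over $x$ and lean on the $\Ad_J$-invariance of $\mathfrak{j}$ to keep the membership condition on $\omega_\mathscr{e}(\xi)$ intact.
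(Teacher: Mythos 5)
Your proposal is correct and follows exactly the route the paper takes: the paper's own proof is a one-line reduction to Proposition \ref{fixedpoint} and Definition \ref{defessential}, noting only that $\omega(\xi)$ may be replaced by $\omega^{\text{norm}}(\xi)$ because the two connections differ by a horizontal $1$-form and $\xi$ is vertical over $x$ --- which is precisely your first ``elementary fact.'' The rest of your write-up just makes explicit the bookkeeping (the computation $\mathfrak{j}\cap\mathfrak{p}=\ker(\lambda_*)$, rebasing the coset at a point over $x$, and invoking Corollary \ref{essential}) that the paper leaves implicit.
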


The only aspect of Proposition \ref{flagaut} that does not follow immediately from Proposition \ref{fixedpoint} and Definition \ref{defessential} is that we can use $\omega^\text{norm}(\xi)$ instead of $\omega(\xi)$, but this just follows because they differ by a horizontal 1-form.

While Proposition \ref{flagaut} does, more or less immediately, recover Theorem 1.2 from \cite{Alt}, and it does tell us that $\xi$ is $\lambda$-essential if $\omega_\mathscr{e}^\text{norm}(\xi)\not\in\ker(\lambda_*)$, the considerations above severely limit its ability to obtain more global results about $\lambda$-inessential infinitesimal automorphisms. However, if we are in a situation where we already know a subordinate $\lambda$-exact geometry, then Proposition \ref{flagaut} is much more useful. For example, when constructing a parabolic geometry, we often \textit{start} with a particular Cartan geometry $(Q,\upsilon)$ of type $(G_-\ker(\lambda),\ker(\lambda))$ and get a Cartan geometry $(\mathscr{G},\omega)$ of type $(G,P)$ by using the extension functor induced by the inclusion maps. We then get a normal Cartan geometry $(\mathscr{G},\omega^\text{norm})$ by normalizing. In this case, Proposition \ref{flagaut} tells us that $\xi\in\mathfrak{aut}(\mathscr{G},\omega)\leq\mathfrak{aut}(\mathscr{G},\omega^\text{norm})$ is $\lambda$-inessential if there is some $\mathscr{e}\in Q\subset\mathscr{G}$ such that $\omega_\mathscr{e}^\text{norm}(\xi)\in\ker(\lambda_*)$.

Similarly, we can give a clumsy version of Proposition \ref{subset} for infinitesimal flag structures.

\begin{proposition}Suppose $(\mathscr{G},\omega^\text{norm})$ is a regular normal parabolic geometry of type $(G,P)$ over $M$, $\lambda:G_0\to\mathbb{R}_+$ is a homomorphism with $\lambda_*^\kgf$ a scaling element, and $K\leq\Aut(\mathscr{G},\omega^\text{norm})$. Then, $K$ is $\lambda$-inessential if and only if there exists a $(\mathscr{G},\omega)$ inducing the same infinitesimal flag structure as $(\mathscr{G},\omega^\text{norm})$ such that $K\leq\Aut(\mathscr{G},\omega)$ and on which there is a subordinate $\lambda$-exact geometry based at $\mathscr{e}\in\mathscr{G}$ such that $K\mathscr{e}\subseteq G_-\ker(\lambda)\bullet\mathscr{e}$.\end{proposition}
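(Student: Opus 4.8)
The plan is to establish this as a direct corollary of the definitions together with Proposition~\ref{subset} and Lemma~\ref{exactWeyl}, reducing the statement about infinitesimal flag structures to the already-handled situation of a fixed Cartan geometry of type $(G,P)$. First I would recall that, by Definition~\ref{defessential}, the subgroup $K\leq\Aut(\mathscr{G},\omega^\text{norm})$ is $\lambda$-inessential precisely when its action restricts to an action by automorphisms on \emph{some} subordinate $\lambda$-exact geometry for the induced infinitesimal flag structure. By the very definition of a subordinate $\lambda$-exact geometry, such a geometry lives on some $(\mathscr{G},\omega)$ inducing the same infinitesimal flag structure and consists of a holonomy reduction to $G_-\ker(\lambda)$ based at some $\mathscr{e}\in\mathscr{G}$ with $q_{{}_P}(G_-\ker(\lambda)\bullet\mathscr{e})=q_{{}_P}(\mathscr{G})=M$. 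Since $K$ acts by automorphisms on $(\mathscr{G},\omega^\text{norm})$ and the infinitesimal flag structure is the same, I would invoke the lifting discussion preceding Definition~\ref{defessential}: automorphisms of the regular infinitesimal flag structure lift to automorphisms of any parabolic geometry inducing it, so in particular $K\leq\Aut(\mathscr{G},\omega)$ makes sense once we have fixed the overlying geometry $(\mathscr{G},\omega)$ carrying the subordinate $\lambda$-exact geometry.

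With $(\mathscr{G},\omega)$, the basepoint $\mathscr{e}$, and the subgroup $J:=G_-\ker(\lambda)$ in hand, I would simply apply Proposition~\ref{subset}. That proposition, whose hypotheses $q_{{}_H}(J\bullet\mathscr{e})=q_{{}_H}(\mathscr{G})=M$ are exactly the defining condition of a subordinate $\lambda$-exact geometry, states that the action of $K\leq\Aut(\mathscr{G},\omega)$ restricts to an action by automorphisms on $J\bullet\mathscr{g}$ for arbitrary $\mathscr{g}$ if and only if $K\mathscr{e}\subseteq\mathscr{e}\bullet J=\mathscr{e}\bullet G_-\ker(\lambda)$. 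Thus the condition ``$K$ restricts to automorphisms of the subordinate $\lambda$-exact geometry'' is equivalent to the condition ``$K\mathscr{e}\subseteq G_-\ker(\lambda)\bullet\mathscr{e}$'' appearing in the statement. Combining this with the equivalence from the previous paragraph closes the forward direction: if $K$ is $\lambda$-inessential, then such a $(\mathscr{G},\omega)$, $\mathscr{e}$, and subordinate $\lambda$-exact geometry exist with $K\leq\Aut(\mathscr{G},\omega)$ and $K\mathscr{e}\subseteq G_-\ker(\lambda)\bullet\mathscr{e}$.

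For the converse, I would read the implications backward: given a $(\mathscr{G},\omega)$ with the same infinitesimal flag structure, with $K\leq\Aut(\mathscr{G},\omega)$ and a subordinate $\lambda$-exact geometry based at $\mathscr{e}$ satisfying $K\mathscr{e}\subseteq G_-\ker(\lambda)\bullet\mathscr{e}$, Proposition~\ref{subset} immediately gives that $K$ restricts to an action by automorphisms on the curved coset $G_-\ker(\lambda)\bullet\mathscr{e}$, which is precisely the subordinate $\lambda$-exact geometry. By Definition~\ref{defessential}, exhibiting one such subordinate $\lambda$-exact geometry on which $K$ acts by automorphisms is exactly what it means for $K$ to be $\lambda$-inessential. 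The main (and really the only) subtlety to handle carefully is the identification of the two notions of ``restricting to automorphisms''—namely that acting by automorphisms on the abstract subordinate $\lambda$-exact geometry of Definition~\ref{defessential} coincides with restricting the $\Aut(\mathscr{G},\omega)$-action to the curved coset $G_-\ker(\lambda)\bullet\mathscr{e}$ as in Proposition~\ref{subset}; this is immediate once one recalls (from the discussion following the definition of curved right cosets) that a subordinate $\lambda$-exact geometry \emph{is} such a curved coset with its induced Cartan structure, so the statement follows by unwinding definitions rather than any new computation.
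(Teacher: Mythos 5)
Your overall reduction---unwind Definition \ref{defessential}, identify a subordinate $\lambda$-exact geometry with the curved coset $G_-\ker(\lambda)\bullet\mathscr{e}$ inside some $(\mathscr{G},\omega)$ inducing the same infinitesimal flag structure, and apply Proposition \ref{subset} with $J=G_-\ker(\lambda)$ and $\mathscr{g}=\mathscr{e}$ (using that $J\bullet\mathscr{e}=\mathscr{e}\bullet J$ because $e$ is a development of $\mathscr{e}$ from itself)---is exactly the argument the paper intends; the proposition is stated there without proof as the ``clumsy version of Proposition \ref{subset}.'' Your backward direction is correct as written.

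There is, however, a genuine error in your forward direction: to obtain $K\leq\Aut(\mathscr{G},\omega)$ you assert that automorphisms of the regular infinitesimal flag structure lift to automorphisms of \emph{any} parabolic geometry inducing it. The paper explicitly denies this: the lifting result holds only for the unique overlying \emph{regular normal} geometry, and the failure of lifting for arbitrary overlying geometries is precisely the ``second complication'' that forces the clause ``such that $K\leq\Aut(\mathscr{G},\omega)$'' to appear in the statement as data to be exhibited rather than something automatic. The step is salvageable without your false premise, but by a different mechanism: $\lambda$-inessentiality gives that each $\varphi\in K$ restricts to an automorphism of the induced Cartan geometry on $G_-\ker(\lambda)\bullet\mathscr{e}$, and since $q_{{}_P}(G_-\ker(\lambda)\bullet\mathscr{e})=M$, Corollary \ref{essential} says the $P$-equivariant extensions of these restricted automorphisms are automorphisms of $(\mathscr{G},\omega)$. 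A $P$-equivariant bundle map is determined by its values on any set meeting every fiber, so these extensions coincide with the original elements of $K$; hence $K\leq\Aut(\mathscr{G},\omega)$, and the remainder of your forward direction (Proposition \ref{subset} yielding $K\mathscr{e}\subseteq\mathscr{e}\bullet G_-\ker(\lambda)=G_-\ker(\lambda)\bullet\mathscr{e}$) then goes through. With that one justification replaced, the proof is complete.
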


Again, while this can be cumbersome, if we have foreknowledge of which parabolic geometries underlying a regular infinitesimal flag structure support a subordinate $\lambda$-exact geometry, then finding whether a subgroup is $\lambda$-essential becomes significantly less complicated.




\end{document}